\providecommand{\U}[1]{\protect\rule{.1in}{.1in}}
\theoremstyle{plain}
\newtheorem{theorem}{Theorem}[subsection]
\newtheorem{corollary}[theorem]{Corollary}
\newtheorem{definition}[theorem]{Definition}
\newtheorem{lemma}[theorem]{Lemma}
\newtheorem{proposition}[theorem]{Proposition}
\newtheorem{remark}[theorem]{Remark}
\newcommand{\LeftEqNo}{\let\veqno\@@leqno}
\numberwithin{equation}  {section}
\begin{document}
\title[Some Results on Inner Quasidiagonal C*-algebras]{Some Results on Inner Quasidiagonal C*-algebras}
\author{Qihui Li}
\curraddr{School of Science, East China University of Science and Technology, Shanghai,
200237, P. R. China}
\email{qihui\_li@126.com}
\thanks{The authors were partially supported by NSFC(Grant No.11671133).}
\subjclass[2010]{Primary: 46L05; Secondary: 46L35}
\keywords{Inner quasidiagonal C*-algebras, cross product C*-algebras, strongly
quasidiagonal C*-algebras, just-infinite C*-algebras, topological free entropy dimension.}

\begin{abstract}
In the current article, we prove the cross product C*-algebra by a Rokhlin
action of finite group on a strongly quasidiagonal C*-algbra is strongly
quasidiagonal again. We also show that a just-infinite C*-algebra is
quasidiagonal if and only if it is inner quasidiagonal. Finally, we compute
the topological free entropy dimension in just-infinite C*-algebras.

\end{abstract}
\maketitle

\ 

\vspace{-0.1cm}




\section{Introduction}

\smallskip For distinguish the class of NF algebras and the class of strong NF
algebras, Blackadar and Kirchberg introduced the concept of inner
quasidiagonal C*-algebras in \cite{BK2}. From its definition, it is apparent
that the class of inner quasidiagonal C*-algebras is a subclass of
quasidiagonal C*-algebras. Many basic properties of inner quasidiagonal
C*-algebras have been discussed in \cite{BK2} and \cite{BK3}. It was also
shown that a separable C*-algebra is a strong NF algebra if and only if it is
nuclear and inner quasidiagonal. Therefore the class of all strong NF algebras
is strictly contained in the class of nuclear and quasidiagonal C*-algebras
(i.e. NF algebras). Examples of separable nuclear C*-algebras which are
quasidiagonal but not inner quasidiagonal were given in the same article. And
we also know that all separable simple C*-algebras are inner quasidiagonal,
all strongly quasidiagonal C*-algebras are inner quasidiagonal. Recall that a
C*-algebra is called strongly quasidiagonal if it is separable and all its
representations are quasidiagonal.

Since not all C*-subalgebras of inner quasidiagonal C*-algebra are inner, the
crossed product C*-algebra by an action of a finite group on an inner
quasidiagonal C*-algebra may not be inner quasidiagonal again. In \cite{WZ},
it was shown that the crossed product C*-algebra by a Rokhlin action of a
finite group on a unital inner quasidiagonal C*-algebra is inner again. So it
is natural to ask whether the same conclusion still hold for non-unital inner
quasidiagonal C*-algebras. In the current paper, we will prove that the
crossed product C*-algebra by a Rokhlin action of finite group on a strongly
quasidiagonal C*-algebra (may not be unital) is strongly quasidiagonal again.

Just-infinite C*-algebras are first introduced by R. Grigorchuk, M. Musat and
M. R$\phi$dam in \cite{GMR} as an analogous notion of the well-established
notions of just-infinite groups and just-infinite (abstract) algebras. A
C*-algebra is called just-infinite if it is infinite-dimensional and all its
proper quotient are finite-dimensional. The necessary and sufficient
conditions for a C*-algebra to be just-infinite was also given in the same
article when the algebras are separable. By using the characterizations of the
just-infinite C*-algebras given in \cite{GMR}, we will prove that every
just-infinite C*-algebra is quasidiagonal if and only if it is inner
quasidiagonal in the separable case. We also show that the crossed product
C*-algebra by a Rokhlin action of a finite group on a just-infinite C*-algebra
(may not be unital) is just-infinite again.

At last, we analyze the topological free entropy dimension in just-infinite
C*-algebras. The notion of topological free entropy dimension $\delta_{top}$
of $n$-tuples of elements in a unital C* algebra was introduced by Voiculescu
in \cite{DV}, where some basic properties of free entropy dimension are
discussed. For more information about $\delta_{top},$ we refer the readers to
\cite{HS2}, \cite{HLS} and \cite{HLSW}.

The organization of the paper is as follows. In section 2, we first recall
some definitions and fix some notation. Then we prove that the crossed product
C*-algebra by a Rokhlin action of finite group on a strongly quasidiagonal
C*-algebra is strongly quasidiagonal again. At last, we prove that every
just-infinite C*-algebra is quasidiagonal if and only if it is inner
quasidiagonal. In section 3, we compute the topological free entropy dimension
$\delta_{top}$ in just-infinite C*-algebras.

\section{Inner Quasidiagonal C*-algebras}

\subsection{\ Definitions and Preliminaries}

The concept of inner quasidiagonal C*-algebras was given by Blackadar and
Kirchberg in \cite{V}, which is obtained by slightly modifying the
Voiculescu's characterization of quasidiagonal C*-algebras.

\begin{definition}
(\cite{BK2}) \textit{A C*-algebra }$\mathcal{A}$\textit{\ is inner
quasidiagonal if, for every }$x_{1},\cdots,x_{n}$ in $\mathcal{A}%
$\textit{\ and }$\varepsilon>0,$\textit{\ there is a representation }$\pi
$\textit{\ of }$\mathcal{A}$\textit{\ on a Hilbert space }$\mathcal{H}%
$\textit{, and a finite-rank projection }$P\in\pi\left(  \mathcal{A}\right)
^{\prime\prime}$\textit{\ such that }$\left\Vert P\pi\left(  x_{i}\right)
-\pi\left(  x_{i}\right)  P\right\Vert <\varepsilon,\left\Vert P\pi\left(
x_{i}\right)  P\right\Vert >\left\Vert x_{i}\right\Vert -\varepsilon
$\textit{\ for }$1\leq i\leq n.$
\end{definition}

Next result give us a characterization of inner quasidiagonal C*-algebras.

\begin{theorem}
\label{11b}(\cite{BK3}) A separable C*-algebra is inner quasidiagonal if and
only if it has a separating family of quasidiagonal irreducible representations.
\end{theorem}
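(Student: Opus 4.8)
The plan is to prove the two implications separately: the forward implication (a separating family of quasidiagonal irreducible representations forces inner quasidiagonality) is routine, while the converse is the substantial part and its only serious difficulty lies in a single step.

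For the easy direction, let $\{\sigma_\alpha\}$ be a separating family of quasidiagonal irreducible representations, which after passing to representatives we may take pairwise inequivalent. Then $\bigoplus_\alpha\sigma_\alpha$ is faithful, hence isometric, so $\|a\|=\sup_\alpha\|\sigma_\alpha(a)\|$ for all $a$. Given $x_1,\dots,x_n$ and $\varepsilon>0$, choose finitely many indices $\alpha_1,\dots,\alpha_k$ such that for each $i$ some $\alpha_j$ satisfies $\|\sigma_{\alpha_j}(x_i)\|>\|x_i\|-\varepsilon$, and put $\pi=\bigoplus_{j=1}^k\sigma_{\alpha_j}$. Since the $\sigma_{\alpha_j}$ are pairwise inequivalent irreducible representations, $\pi(\mathcal{A})''=\bigoplus_j B(\mathcal{H}_{\alpha_j})$. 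Quasidiagonality of each $\sigma_{\alpha_j}$ yields finite-rank projections $P_j\in B(\mathcal{H}_{\alpha_j})$ with $\|[P_j,\sigma_{\alpha_j}(x_i)]\|<\varepsilon$ and $\|P_j\sigma_{\alpha_j}(x_i)P_j\|>\|\sigma_{\alpha_j}(x_i)\|-\varepsilon$; then $P=\bigoplus_j P_j\in\pi(\mathcal{A})''$ is finite-rank, $\|[P,\pi(x_i)]\|=\max_j\|[P_j,\sigma_{\alpha_j}(x_i)]\|<\varepsilon$, and $\|P\pi(x_i)P\|=\max_j\|P_j\sigma_{\alpha_j}(x_i)P_j\|>\|x_i\|-2\varepsilon$. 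Adjusting $\varepsilon$ completes this direction.

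For the converse the key technical fact is that inner quasidiagonal approximations can always be relocated inside irreducible representations. Given $x_1,\dots,x_n$ and $\varepsilon>0$, the definition provides a representation $\pi$ on $\mathcal{H}$ and a nonzero finite-rank projection $P$ in $\mathcal{M}:=\pi(\mathcal{A})''$ with $\|[P,\pi(x_i)]\|<\varepsilon$ and $\|P\pi(x_i)P\|>\|x_i\|-\varepsilon$. Now $P\mathcal{M}P$ is a finite-dimensional von Neumann algebra; if $q$ is a minimal projection of $P\mathcal{M}P$ then, being minimal there, $q$ satisfies $q(P\mathcal{M}P)q=\mathbb{C}q$, and since $q\leq P$ this equals $q\mathcal{M}q$, so $q$ is a minimal projection of $\mathcal{M}$ and its central support $z(q)$ is a minimal central projection with $\mathcal{M}z(q)$ a type I factor. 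Letting $q$ range over the finitely many blocks of $P\mathcal{M}P$ and taking $z$ to be the join of the resulting central supports, we obtain a central projection $z\geq P$ with $\mathcal{M}z$ a finite direct sum of type I factors (that $z\geq P$ holds because the type II and type III parts of $\mathcal{M}$ admit no nonzero finite-rank projection, a corner of a type II or III algebra on a finite-dimensional space being impossible). Splitting the (necessarily finite-dimensional) multiplicity spaces of these factors by orthonormal bases refines $z\mathcal{H}$ into mutually orthogonal $\pi(\mathcal{A})$-invariant subspaces $\mathcal{K}_1,\dots,\mathcal{K}_N$ on which $\pi$ restricts to irreducible representations $\sigma_m$, with $P=\bigoplus_m P_m$ and each $P_m$ a finite-rank projection in $B(\mathcal{K}_m)$. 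Invariance of the $\mathcal{K}_m$ gives $[P,\pi(x_i)]=\bigoplus_m[P_m,\sigma_m(x_i)]$ and $P\pi(x_i)P=\bigoplus_m P_m\sigma_m(x_i)P_m$, so $\|[P_m,\sigma_m(x_i)]\|<\varepsilon$ for all $m,i$, while for each $i$ some index $m(i)$ has $\|P_{m(i)}\sigma_{m(i)}(x_i)P_{m(i)}\|>\|x_i\|-\varepsilon$.

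It remains to assemble a separating family of \emph{genuinely} quasidiagonal irreducible representations, and this is the main obstacle. It suffices to produce, for every $0\neq a\in\mathcal{A}$ and every $\varepsilon>0$, a quasidiagonal irreducible representation $\sigma$ with $\|\sigma(a)\|>\|a\|-\varepsilon$; then the collection of all these (indexed over a countable dense subset of $\mathcal{A}$) is separating. The difficulty is that such a $\sigma$ must be quasidiagonal for all finite subsets and all tolerances, whereas the previous step only furnishes irreducible approximations for one finite subset at a time, and one cannot merely take a direct sum of such irreducible approximations and pass to a summand, since a direct summand of a quasidiagonal representation need not be quasidiagonal — the compression of $S\oplus S^{*}$ onto the unilateral-shift summand $S$ being the classical warning. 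The plan is therefore to construct $\sigma$ directly on $\ell^{2}(\mathbb{N})$ by a stagewise back-and-forth argument: one maintains a finite-dimensional partial datum — approximate matrix values of $\sigma$ on an enumerated dense sequence, an increasing family of finite-rank projections, and a growing list of vectors to be hit — and at each stage enlarges it, feeding in the output of the previous paragraph for a larger finite set with a smaller tolerance to supply the next block together with a finite-rank projection nearly commuting with it, and splicing it onto the current picture by a small Kadison-transitivity-type perturbation. The bookkeeping must simultaneously keep the approximate values asymptotically $*$-multiplicative (so the limit is a representation), keep the increasing projections finite-rank and asymptotically commuting with the limit (so the limit is quasidiagonal), exhaust a dense set of vectors through the partial images (so the limit is irreducible), and keep the compression of $a$ of norm exceeding $\|a\|-\varepsilon$. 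Reconciling the gluing of the successive local data, which a priori live in unrelated representations, while preserving all four requirements, is where the real work lies; an equivalent reformulation of this difficulty is to show that the pure states whose GNS representations are quasidiagonal are weak${}^{*}$-dense in the pure state space of $\mathcal{A}$, after which one approximates a pure state $\omega$ with $\omega(a^{*}a)=\|a\|^{2}$ by such states.
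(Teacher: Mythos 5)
The paper does not prove this theorem; it cites it from Blackadar--Kirchberg \cite{BK3}, where the implication ``inner quasidiagonal $\Rightarrow$ separating family of quasidiagonal irreducible representations'' is the main theorem. Measured against that, your write-up has a genuine gap exactly where you say the real work lies. Your easy direction is correct, and your localization step is also correct and is essentially Blackadar--Kirchberg's reformulation of inner quasidiagonality: a finite-rank projection in $\pi(\mathcal{A})''$ forces a finite-multiplicity type I central summand, so the approximating data for a fixed finite set $\{x_1,\dots,x_n\}$ and $\varepsilon$ can always be taken inside a finite direct sum of irreducible representations. But the theorem requires a \emph{single} irreducible representation that is quasidiagonal for \emph{all} finite sets and all tolerances simultaneously and still nearly attains $\|a\|$, and at that point you only describe a ``stagewise back-and-forth'' plan and explicitly concede that reconciling the successive local data (which live in unrelated representations) ``is where the real work lies.'' No mechanism is given for splicing the $k$-th stage's irreducible datum onto the $(k-1)$-st while preserving asymptotic multiplicativity, the increasing finite-rank projections, irreducibility of the limit, and the norm lower bound; nor is the proposed reformulation (weak${}^{*}$-density of pure states with quasidiagonal GNS representation in the pure state space) proved or even reduced to something known. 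Since you yourself flag (via the $S\oplus S^{*}$ example) that naive compressions destroy quasidiagonality, the burden is precisely on this gluing step, and it is not discharged.

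To be concrete about what is missing: in \cite{BK3} the passage from the ``local'' statement to a genuinely quasidiagonal irreducible representation is carried out by a careful limit construction using the separability of $\mathcal{A}$, Kadison transitivity/excision-type perturbations to match the local data across stages, and control of the resulting pure state so that the GNS representation is irreducible, quasidiagonal, and separates the given element. That argument occupies the bulk of their paper; your proposal identifies the right target and the right obstacles but does not supply the construction, so as written it proves only the easy implication together with a correct but preliminary reduction of the hard one.
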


Now let us recall the concept of completely positive maps. A map $\varphi$
from C*-algebra $\mathcal{A}$ to a C*-algebra $\mathcal{B}$ is said to be
completely positive if $\varphi_{n}:\mathcal{M}_{n}\left(  \mathcal{A}\right)
\rightarrow\mathcal{M}_{n}(\mathcal{B}),$ defined by
\[
\varphi_{n}\left(  \left[  a_{i,j}\right]  \right)  =\left[  \varphi\left(
a_{i,j}\right)  \right]  ,
\]
is positive for every $n.$ We use c.p. to abbreviate "completely positive,"
u.c.p. for "unital completely positive" and c.c.p. for "contractive completely positive."

\begin{definition}
Let $\mathcal{A}$ and $\mathcal{B}$ be C*-algebras and $\varphi
:\mathcal{A\rightarrow B}$ be a c.c.p. map. The C*-subalgebra
\[
\mathcal{A}_{\varphi}=\left\{  a\in\mathcal{A}:\varphi\left(  a^{\ast
}a\right)  =\varphi\left(  a\right)  ^{\ast}\varphi(a)\text{ and }%
\varphi\left(  aa^{\ast}\right)  =\varphi\left(  a\right)  \varphi\left(
a\right)  ^{\ast}\right\}
\]
is called the multiplicative domain of $\varphi.$ It is well-known that
$\mathcal{A}_{\varphi}$ is the largest subalgebra of $\mathcal{A}$ on which
$\varphi$ restricts to a $\ast$-homomorphism.
\end{definition}

Suppose $\pi:\mathcal{B}\left(  \mathcal{H}\right)  \rightarrow\mathcal{B}%
\left(  \mathcal{H}\right)  /\mathbb{K}\mathcal{(H)}$ is the canonical mapping
onto the Calkin algebra.

\begin{lemma}
\label{dixmier}(Stinespring) Let $\mathcal{A}$ be a unital C*-algebra and
$\varphi:\mathcal{A\longrightarrow B}\left(  \mathcal{H}\right)  $ be a c.p.
map. Then, there exist a Hilbert space $\widehat{\mathcal{H}}$, a $\ast
$-representation $\pi_{\varphi}:\mathcal{A\longrightarrow B}\left(
\widehat{\mathcal{H}}\right)  $ and an operator $V:\mathcal{H\longrightarrow
}\widehat{\mathcal{H}}$ such that
\[
\varphi\left(  a\right)  =V^{\ast}\pi_{\varphi}\left(  a\right)  V
\]
for every $a\in\mathcal{A}$. In particular, $\left\Vert \varphi\right\Vert
=\left\Vert V^{\ast}V\right\Vert =\left\Vert \varphi\left(  1\right)
\right\Vert .$
\end{lemma}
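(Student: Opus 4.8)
The plan is to carry out the Stinespring/GNS construction. First I would form the algebraic tensor product $\mathcal{A}\odot\mathcal{H}$ and define on it the sesquilinear form determined on elementary tensors by
\[
\langle a\otimes\xi,\,b\otimes\eta\rangle=\langle\varphi(b^{\ast}a)\xi,\,\eta\rangle_{\mathcal{H}}.
\]
The crucial point is that this form is positive semidefinite: for $x=\sum_{i=1}^{n}a_{i}\otimes\xi_{i}$ one computes $\langle x,x\rangle=\sum_{i,j}\langle\varphi(a_{j}^{\ast}a_{i})\xi_{i},\xi_{j}\rangle=\langle\varphi_{n}(M)\widehat{\xi},\widehat{\xi}\rangle$, where $\widehat{\xi}=(\xi_{1},\dots,\xi_{n})\in\mathcal{H}^{n}$ and $M\in\mathcal{M}_{n}(\mathcal{A})$ is the matrix with $(j,i)$-entry $a_{j}^{\ast}a_{i}$; since $M=B^{\ast}B$ for $B$ the matrix whose first row is $(a_{1},\dots,a_{n})$ and whose other rows vanish, $M$ is positive, so complete positivity of $\varphi$ forces $\langle x,x\rangle\geq0$. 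Quotienting $\mathcal{A}\odot\mathcal{H}$ by the subspace $N$ of null vectors and completing yields the Hilbert space $\widehat{\mathcal{H}}$.

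Next I would define $\pi_{\varphi}(a)$ on $\mathcal{A}\odot\mathcal{H}$ by $\pi_{\varphi}(a)(b\otimes\xi)=(ab)\otimes\xi$. To see this descends to a well-defined bounded operator on $\widehat{\mathcal{H}}$, one uses the matrix inequality $[\,b_{j}^{\ast}a^{\ast}ab_{i}\,]\leq\|a\|^{2}[\,b_{j}^{\ast}b_{i}\,]$ in $\mathcal{M}_{n}(\mathcal{A})$, which follows, upon conjugating by the matrix $B'$ with first row $(b_{1},\dots,b_{n})$, from the estimate $D\leq\|a\|^{2}I_{n}$ for the diagonal matrix $D\in\mathcal{M}_{n}(\mathcal{A})$ with each diagonal entry equal to $a^{\ast}a$; applying $\varphi_{n}$, which is positive and hence order preserving, gives
\[
\langle\pi_{\varphi}(a)x,\pi_{\varphi}(a)x\rangle\leq\|a\|^{2}\langle x,x\rangle
\]
for every $x\in\mathcal{A}\odot\mathcal{H}$. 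In particular $\pi_{\varphi}(a)$ sends $N$ into $N$, induces a contractive operator on $\widehat{\mathcal{H}}$ (so $\|\pi_{\varphi}(a)\|\leq\|a\|$), and $a\mapsto\pi_{\varphi}(a)$ is readily checked to be linear, multiplicative, unital, and $\ast$-preserving (from $\langle\pi_{\varphi}(a)x,y\rangle=\langle x,\pi_{\varphi}(a^{\ast})y\rangle$); that is, a $\ast$-representation.

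Then I would let $V\xi\in\widehat{\mathcal{H}}$ be the class of $1\otimes\xi$; since $\|V\xi\|^{2}=\langle\varphi(1)\xi,\xi\rangle\leq\|\varphi(1)\|\,\|\xi\|^{2}$, the map $V:\mathcal{H}\to\widehat{\mathcal{H}}$ is bounded. For all $a\in\mathcal{A}$ and $\xi,\eta\in\mathcal{H}$,
\[
\langle V^{\ast}\pi_{\varphi}(a)V\xi,\eta\rangle=\langle\pi_{\varphi}(a)(1\otimes\xi),1\otimes\eta\rangle=\langle a\otimes\xi,1\otimes\eta\rangle=\langle\varphi(a)\xi,\eta\rangle,
\]
so $\varphi(a)=V^{\ast}\pi_{\varphi}(a)V$. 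Taking $a=1$ and using $\pi_{\varphi}(1)=I$ gives $V^{\ast}V=\varphi(1)$, and then $\|\varphi(a)\|=\|V^{\ast}\pi_{\varphi}(a)V\|\leq\|V\|^{2}\|a\|=\|V^{\ast}V\|\,\|a\|=\|\varphi(1)\|\,\|a\|$, so $\|\varphi\|\leq\|\varphi(1)\|$; the reverse inequality is trivial since $\|1\|=1$, whence $\|\varphi\|=\|V^{\ast}V\|=\|\varphi(1)\|$.

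I expect the only real obstacle to be the verification that $\pi_{\varphi}$ is well-defined and contractive, i.e., extracting the estimate $\langle\pi_{\varphi}(a)x,\pi_{\varphi}(a)x\rangle\leq\|a\|^{2}\langle x,x\rangle$ from complete positivity by way of the operator-matrix inequality; the positivity of the form, the intertwining identity and the norm computation are all short formal manipulations once $\widehat{\mathcal{H}}$ and $\pi_{\varphi}$ are in hand.
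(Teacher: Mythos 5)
Your proof is correct and complete: it is the standard Stinespring construction (positivity of the form on $\mathcal{A}\odot\mathcal{H}$ via $M=B^{\ast}B$ and complete positivity, the contractivity estimate $[\,b_{j}^{\ast}a^{\ast}ab_{i}\,]\leq\|a\|^{2}[\,b_{j}^{\ast}b_{i}\,]$ to make $\pi_{\varphi}$ well defined, and $V\xi=[1\otimes\xi]$ giving $V^{\ast}V=\varphi(1)$ and the norm identity). The paper states this lemma as Stinespring's theorem without proof, so there is no alternative argument to compare against; yours is the canonical one.
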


We call the triplet $\left(  \pi_{\varphi},\widehat{\mathcal{H}},V\right)  $
in preceding lemma a Stinespring dilation of $\varphi.$ When $\varphi$ is
unital, $V^{\ast}V=\varphi\left(  I\right)  =I,$ and hence $V$ is an isometry.
So in this case we may assume that $\mathcal{H}\subseteq\widehat{\mathcal{H}}$
and $V$ is a projection $P$ on $\mathcal{H}$ such that $\varphi\left(
a\right)  =P\pi_{\varphi}\left(  a\right)  |_{\mathcal{H}}$.

So next result give us another characterization of inner quasidiagonal C*-algebras.

\begin{theorem}
\label{inner 2}(\cite{BO})A unital C*-algebra $\mathcal{A}$ is inner
quasidiagonal if and only if there is a sequence of u.c.p. maps $\varphi
_{n}:\mathcal{A}\rightarrow\mathcal{M}_{k_{n}}(\mathbb{C})$ such that
$\left\Vert a\right\Vert =\lim\left\Vert \varphi_{n}\left(  a\right)
\right\Vert $ and $dist(a,\mathcal{A}_{\varphi_{n}})\rightarrow0$ for all
$a\in\mathcal{A}$ where $\mathcal{A}_{\varphi_{n}}$ is the multiplicative
domain of $\varphi_{n}.$
\end{theorem}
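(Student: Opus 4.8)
The plan is to prove the two directions separately, passing through Theorem \ref{11b} and the Stinespring machinery recalled above. For the ``only if'' direction, assume $\mathcal{A}$ is separable, unital and inner quasidiagonal. By Theorem \ref{11b} there is a separating family $\{\pi_\lambda\}$ of quasidiagonal irreducible representations; equivalently, going back to the definition, for any finite set $x_1,\dots,x_n$ and $\varepsilon>0$ there is a representation $\pi$ and a finite-rank projection $P\in\pi(\mathcal{A})''$ with $\|[P,\pi(x_i)]\|<\varepsilon$ and $\|P\pi(x_i)P\|>\|x_i\|-\varepsilon$. Enumerate a dense sequence $(a_m)$ in $\mathcal{A}$ and apply this with larger and larger finite sets and $\varepsilon\to 0$ to produce representations $\pi_n$ and finite-rank projections $P_n\in\pi_n(\mathcal{A})''$. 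The map $\varphi_n(a)=P_n\pi_n(a)P_n$, compressed to the finite-dimensional space $P_n\mathcal{H}_n\cong\mathbb{C}^{k_n}$, is u.c.p. (it is unital because $P_n$ is a projection in the bicommutant, hence $P_n\pi_n(1)P_n=P_n$). The near-commutation $\|[P_n,\pi_n(a)]\|\to 0$ forces $\varphi_n$ to be asymptotically multiplicative on each $a_m$, which is exactly the statement that $\mathrm{dist}(a_m,\mathcal{A}_{\varphi_n})\to 0$: one checks that $\|\varphi_n(a^*a)-\varphi_n(a)^*\varphi_n(a)\|=\|P_na^*(1-P_n)aP_n\|\le\|a\|\,\|[P_n,\pi_n(a)]\|$, and a standard perturbation argument (e.g. functional calculus on the almost-fixed-point set) upgrades this to genuine membership in the multiplicative domain after a small correction. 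The norm condition $\|\varphi_n(a_m)\|=\|P_n\pi_n(a_m)P_n\|\to\|a_m\|$ is immediate from $\|P_n\pi_n(a_m)P_n\|>\|a_m\|-\varepsilon$ together with $\|\varphi_n\|\le 1$, and density of $(a_m)$ extends all three conclusions to all of $\mathcal{A}$.

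For the ``if'' direction, suppose we are given u.c.p.\ maps $\varphi_n:\mathcal{A}\to\mathcal{M}_{k_n}(\mathbb{C})$ with $\|a\|=\lim\|\varphi_n(a)\|$ and $\mathrm{dist}(a,\mathcal{A}_{\varphi_n})\to 0$ for every $a$. Apply Stinespring (Lemma \ref{dixmier}) to each $\varphi_n$: since $\varphi_n$ is unital we may take $\mathbb{C}^{k_n}\subseteq\widehat{\mathcal H}_n$ with $\varphi_n(a)=P_n\pi_{\varphi_n}(a)|_{\mathbb{C}^{k_n}}$ for a rank-$k_n$ projection $P_n$ and a $*$-representation $\pi_{\varphi_n}$. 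Form $\pi=\bigoplus_n\pi_{\varphi_n}$ and $P=\bigoplus_n P_n$; then $P$ is a projection, but of infinite rank, so to fit the definition one instead works with the individual $\pi_{\varphi_n}$'s. The key point is that $P_n$ lies in $\pi_{\varphi_n}(\mathcal{A})''$: this is where $\mathrm{dist}(a,\mathcal{A}_{\varphi_n})\to0$ is used. Indeed, on the multiplicative domain $\varphi_n$ is a $*$-homomorphism, so for $b\in\mathcal{A}_{\varphi_n}$ the Stinespring space decomposes and $P_n$ commutes with $\pi_{\varphi_n}(b)$; since such $b$ are asymptotically dense in $\mathcal{A}$, $P_n$ is approximately central for $\pi_{\varphi_n}(\mathcal{A})$, and a limiting/weak-closure argument places a suitable finite-rank projection in the bicommutant. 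Combining with $\|P_n\pi_{\varphi_n}(a)P_n\|=\|\varphi_n(a)\|\to\|a\|$, the definition of inner quasidiagonality is verified for any finite tuple.

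The main obstacle is the rigorous handling of the multiplicative domain in the ``if'' direction: one must convert the \emph{approximate} statements ``$\mathrm{dist}(a,\mathcal{A}_{\varphi_n})$ is small'' and ``$P_n$ is approximately central'' into an \emph{exact} finite-rank projection sitting in the von Neumann algebra $\pi(\mathcal{A})''$ for a single honest representation $\pi$ of $\mathcal{A}$. The clean way to do this is to index over a free ultrafilter $\omega$: set $\pi_\omega=\prod_\omega\pi_{\varphi_n}$ acting on the ultraproduct Hilbert space, and $P_\omega=(P_n)_\omega$, which is a projection commuting with $\pi_\omega(\mathcal{A})$ because the defects $[P_n,\pi_{\varphi_n}(b)]$ vanish in the limit for $b$ ranging over a dense set of multiplicative-domain elements; then $P_\omega\in\pi_\omega(\mathcal{A})''$, it has ``finite rank'' relative to the tracial structure, and cutting down by finite subprojections and restricting back to a separable subrepresentation recovers exactly the data in the definition. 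Care is needed to ensure the projection one extracts is genuinely finite-rank (not merely finite in a $\mathrm{II}_1$ sense) and to verify the norm-preservation survives the compression; these are the technical points I expect to occupy the bulk of the argument, but they are routine given the hypotheses. An alternative that avoids ultrafilters entirely is to observe that the hypotheses say precisely that $\mathcal{A}$ admits a separating family of representations $\pi_{\varphi_n}$ each of which is inner quasidiagonal via the single projection $P_n$, and then invoke Theorem \ref{11b} after passing to irreducible subrepresentations — but checking irreducibility forces one back to the same perturbation estimates.
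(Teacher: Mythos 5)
The paper does not prove this statement (it is quoted from \cite{BO} as a known characterization), so I can only assess your argument on its own terms, and both directions leave precisely the hard step as an unsupported assertion. In the ``only if'' direction, the claim that the estimate $\left\Vert \varphi_{n}(a^{\ast}a)-\varphi_{n}(a)^{\ast}\varphi_{n}(a)\right\Vert \leq\left\Vert a\right\Vert \left\Vert [P_{n},\pi_{n}(a)]\right\Vert$ can be ``upgraded by a standard perturbation argument'' to $dist(a,\mathcal{A}_{\varphi_{n}})\rightarrow 0$ is the whole content of the theorem and is false for general u.c.p.\ maps: approximate multiplicativity at $a$ does not put $a$ near the multiplicative domain. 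For instance, $\varphi:\mathcal{M}_{2}(\mathbb{C})\oplus\mathcal{M}_{2}(\mathbb{C})\rightarrow\mathcal{M}_{2}(\mathbb{C})$, $\varphi(x\oplus y)=(1-\epsilon)x+\epsilon y$, satisfies $\varphi(a^{\ast}a)-\varphi(a)^{\ast}\varphi(a)=\epsilon(1-\epsilon)(x-y)^{\ast}(x-y)$, which is as small as you like, while its multiplicative domain is exactly $\{x\oplus x\}$, so $dist(x\oplus y,\mathcal{A}_{\varphi})=\left\Vert x-y\right\Vert/2$ stays bounded away from $0$. A structural way to see that something essential is missing: apart from unitality (which holds for any projection since $\pi_{n}(1)=1$), your argument never uses the hypothesis $P_{n}\in\pi_{n}(\mathcal{A})^{\prime\prime}$; if it were correct it would therefore show that every quasidiagonal C*-algebra satisfies the right-hand side and hence is inner quasidiagonal, contradicting the Blackadar--Kirchberg examples recalled in the introduction of this paper. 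The genuine proof must exploit $P_{n}\in\pi_{n}(\mathcal{A})^{\prime\prime}$: note that $a\in\mathcal{A}_{\varphi_{n}}$ iff $[P_{n},\pi_{n}(a)]=0$, use the central support of the finite-rank projection $P_{n}$ to cut $\pi_{n}(\mathcal{A})^{\prime\prime}$ down to a finite direct sum of type I factors where $\pi_{n}$ restricts to irreducible representations, and then use Kadison transitivity to replace the operator $P_{n}\pi_{n}(a)P_{n}+(1-P_{n})\pi_{n}(a)(1-P_{n})$ (which commutes with $P_{n}$ but lies only in the von Neumann algebra) by an honest element of $\mathcal{A}$ close to $a$.

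The ``if'' direction has a parallel gap. You correctly observe that the Stinespring projection $P_{n}$ commutes with $\pi_{\varphi_{n}}(b)$ for every $b$ in the multiplicative domain, i.e.\ $P_{n}\in\pi_{\varphi_{n}}(\mathcal{A}_{\varphi_{n}})^{\prime}$, and hence almost commutes with each fixed $\pi_{\varphi_{n}}(a)$ for large $n$. But the definition requires a finite-rank projection in the \emph{bicommutant} $\pi(\mathcal{A})^{\prime\prime}$, and neither exact membership in the commutant of a subalgebra nor approximate centrality for $\pi_{\varphi_{n}}(\mathcal{A})$ places $P_{n}$, or any nearby finite-rank projection, in $\pi_{\varphi_{n}}(\mathcal{A})^{\prime\prime}$; commutant and bicommutant are opposite conditions, and no ``limiting/weak-closure argument'' converts one into the other. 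The ultrafilter variant does not repair this: a projection that is finite relative to a trace in a von Neumann ultraproduct cannot in general be cut back to a genuinely finite-rank projection in a single $\pi_{\varphi_{n}}(\mathcal{A})^{\prime\prime}$, and the norm condition does not obviously survive such a compression. What the hypothesis really gives you for fixed $n$ is a finite-dimensional $\ast$-representation of the subalgebra $\mathcal{A}_{\varphi_{n}}$ together with the asymptotic norm condition on $\mathcal{A}$; manufacturing from this a representation of $\mathcal{A}$ itself carrying a finite-rank projection in its bicommutant (or a separating family of quasidiagonal irreducible representations, so as to invoke Theorem \ref{11b}) is the real work of the converse, and your sketch does not yet contain it.
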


\smallskip Recall that a faithful representation of a C*-algebra $\mathcal{A}$
is called essential if $\pi\left(  \mathcal{A}\right)  $ contains no nonzero
finite rank operators

\begin{proposition}
\label{Voi} (1.7, \cite{V2}) Let $\pi:\mathcal{A}\rightarrow\mathcal{B}\left(
\mathcal{H}\right)  $ be a faithful essential representation. Then
$\mathcal{A}$ is quasidiagonal if and only if $\pi\left(  \mathcal{A}\right)
$ is quasidiagonal set of operators.
\end{proposition}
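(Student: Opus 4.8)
The plan is to derive this from Voiculescu's noncommutative Weyl--von Neumann theorem; throughout I will work in the separable case, where quasidiagonality is most naturally discussed and where the tools below apply. The backward implication requires no work: if $\pi$ is faithful and $\pi(\mathcal{A})$ is a quasidiagonal set of operators, then $\pi$ is a faithful representation of $\mathcal{A}$ with quasidiagonal image, which is precisely what it means for $\mathcal{A}$ to be quasidiagonal. So the real content is the forward implication, and the idea is to replace the given representation $\pi$ by a conveniently built one and then push the conclusion back across an approximate unitary equivalence.

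First I would manufacture a faithful \emph{essential} quasidiagonal representation out of the hypothesis. Since $\mathcal{A}$ is quasidiagonal, fix a faithful representation $\sigma_{0}\colon\mathcal{A}\to\mathcal{B}(\mathcal{K})$, $\mathcal{K}$ separable, whose image is a quasidiagonal set of operators, witnessed by an increasing net of finite-rank projections $P_{\lambda}\uparrow I_{\mathcal{K}}$ with $\|[P_{\lambda},\sigma_{0}(a)]\|\to 0$ for all $a$. Set $\sigma=\sigma_{0}^{(\infty)}$, the countable amplification acting on $\mathcal{K}^{(\infty)}$. Then $\sigma$ is still faithful, it is essential (for $a\neq 0$ the operator $\sigma(a)$ is an infinite orthogonal sum of copies of the nonzero operator $\sigma_{0}(a)$, hence not compact), and $\sigma(\mathcal{A})$ is still a quasidiagonal set, since the finite-rank projections $P_{\lambda}\oplus\cdots\oplus P_{\lambda}\oplus 0\oplus\cdots$ (with $N$ nonzero summands) increase strongly to $I$ as $N$ and $\lambda$ grow and have the same commutator norms against $\sigma(\mathcal{A})$ as $P_{\lambda}$ does against $\sigma_{0}(\mathcal{A})$.

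Next I would invoke Voiculescu's noncommutative Weyl--von Neumann theorem: $\pi$ and $\sigma$ are both faithful essential representations of the separable C*-algebra $\mathcal{A}$ on separable Hilbert spaces, hence approximately unitarily equivalent, so there are unitaries $u_{n}\colon\mathcal{K}^{(\infty)}\to\mathcal{H}$ with $\|u_{n}\sigma(a)u_{n}^{\ast}-\pi(a)\|\to 0$ for every $a\in\mathcal{A}$ (if $\mathcal{A}$ is nonunital, apply the theorem to the unitizations). Finally I would transfer quasidiagonality: each $u_{n}\sigma(\mathcal{A})u_{n}^{\ast}$ is unitarily conjugate to $\sigma(\mathcal{A})$, hence a quasidiagonal set of operators, and a quasidiagonal set of operators admits approximating finite-rank projections dominating any prescribed finite-rank projection $Q$ (enlarge an approximating projection $P_{\lambda}$ by $\operatorname{ran}Q$; since $\operatorname{ran}Q$ is finite-dimensional and the net tends strongly to $I$, $\operatorname{ran}Q$ is almost contained in $\operatorname{ran}P_{\lambda}$ for large $\lambda$, so the enlargement stays close to $P_{\lambda}$ in norm). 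So given a finite $F\subseteq\mathcal{A}$, $\varepsilon>0$, and a finite-rank projection $Q$ on $\mathcal{H}$, I pick $n$ with $\|u_{n}\sigma(a)u_{n}^{\ast}-\pi(a)\|<\varepsilon/3$ on $F$, then a finite-rank $P\geq Q$ with $\|[P,u_{n}\sigma(a)u_{n}^{\ast}]\|<\varepsilon/3$ on $F$, whence $\|[P,\pi(a)]\|<\varepsilon$ on $F$; running $(F,\varepsilon,Q)$ through the obvious directed set produces the increasing net of finite-rank projections, converging strongly to $I$ and asymptotically commuting with $\pi(\mathcal{A})$, that exhibits $\pi(\mathcal{A})$ as a quasidiagonal set of operators.

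The main obstacle is the appeal to Voiculescu's theorem in the penultimate step: the statement that any two faithful essential representations of a separable C*-algebra are approximately unitarily equivalent is the genuinely hard input. Once that is granted, the amplification step is a one-line computation and the transfer step needs only the elementary observation that a quasidiagonal set of operators has approximating finite-rank projections above any given finite-rank projection, together with a routine $\varepsilon/3$ estimate.
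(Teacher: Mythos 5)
The paper states this proposition as a citation of Voiculescu (1.7 in \cite{V2}) and gives no proof, and your argument is exactly the standard one from that source: the backward direction is immediate from the definition, and the forward direction amplifies a faithful quasidiagonal representation to make it essential, invokes Voiculescu's Weyl--von Neumann theorem to get approximate unitary equivalence with $\pi$, and transfers quasidiagonality of the operator set across that equivalence by an $\varepsilon/3$ estimate. Your proposal is correct (modulo the separability hypotheses you yourself flag, which are standing assumptions in this context) and takes essentially the same route as the original proof.
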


A C*-algebra $\mathcal{A}$ is called antiliminal if $\mathcal{A}$ contains no
nonzero abelian elements, i.e. if every nonzero hereditary C*-subalgebra of
$\mathcal{A}$ is noncommutative. Recall that a C*-algebra is called primitive
if it admit a faithful irreducible representation. It is said to be prime if,
whenever $\mathcal{I}$ and $\mathcal{J}$ are closed two-sided ideals in
$\mathcal{A}$ such that $\mathcal{I\cap J=}$ $0,$ then either $\mathcal{I=}$
$0$, or $\mathcal{J=}$ $0.$ It is well-known that a separable C*-algebra is
prime if and only if it is primitive. Combining Theorem \ref{11b}, Proposition
\ref{Voi} and IV.1.1.7 in \cite{BK1}, we can quickly get the following lemma.

\begin{lemma}
\label{inner 1.2}(Corollary 2.6 in \cite{BK2}) Every separable antiliminal
quasidiagonal prime C*-algebra is inner. Every separable simple quasidiagonal
C*-algebra is inner quasidiagonal.
\end{lemma}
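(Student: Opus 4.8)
The plan is to deduce both statements from the characterization of inner quasidiagonality in Theorem \ref{11b} together with Proposition \ref{Voi} and the structural fact (IV.1.1.7 in \cite{BK1}) that a primitive antiliminal C*-algebra admits a faithful irreducible \emph{essential} representation. For the first assertion, let $\mathcal{A}$ be separable, antiliminal, quasidiagonal and prime. Since separable prime C*-algebras are primitive, there is a faithful irreducible representation $\pi$ of $\mathcal{A}$ on some $\mathcal{H}$; because $\mathcal{A}$ is antiliminal, $\pi(\mathcal{A})$ contains no nonzero compact operators, so $\pi$ is essential. By Proposition \ref{Voi}, $\pi$ being faithful and essential together with $\mathcal{A}$ quasidiagonal forces $\pi(\mathcal{A})$ to be a quasidiagonal set of operators, i.e.\ $\pi$ is a quasidiagonal representation. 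Thus $\{\pi\}$ is already a (one-element) separating family of quasidiagonal irreducible representations, and Theorem \ref{11b} gives that $\mathcal{A}$ is inner quasidiagonal.

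For the second assertion, suppose $\mathcal{A}$ is separable, simple and quasidiagonal. If $\mathcal{A}$ is finite-dimensional it is a full matrix algebra, hence trivially inner quasidiagonal, so assume $\mathcal{A}$ is infinite-dimensional. Being simple it is in particular prime. I would split into two cases according to whether $\mathcal{A}$ is antiliminal. If it is, the first part applies directly. If it is not, then $\mathcal{A}$ contains a nonzero abelian element; by simplicity the ideal it generates is all of $\mathcal{A}$, and a standard argument (using that a simple C*-algebra with a nonzero abelian hereditary subalgebra, hence a nonzero one-dimensional hereditary subalgebra since the relevant quotients collapse, is elementary) shows $\mathcal{A}\cong\mathbb{K}(\mathcal{H})$ for some separable $\mathcal{H}$; but $\mathbb{K}(\mathcal{H})$ is quasidiagonal only when $\mathcal{H}$ is finite-dimensional, contradicting our standing assumption. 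Hence in the infinite-dimensional case $\mathcal{A}$ must be antiliminal, and inner quasidiagonality follows from the first assertion.

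The step I expect to be the main obstacle is handling the non-antiliminal case cleanly: one must argue that a separable simple C*-algebra that is \emph{not} antiliminal is forced to be $\mathbb{K}(\mathcal{H})$. The cleanest route is probably to invoke that a C*-algebra has a nonzero abelian element iff it has a nonzero commutative (indeed, by cutting down, nonzero rank-one-producing) hereditary subalgebra, then note that a simple C*-algebra containing a minimal projection is elementary, i.e.\ isomorphic to the compacts on some Hilbert space; an alternative is to quote directly that an antiliminal hypothesis is automatic for infinite-dimensional simple C*-algebras, or to restrict attention, as in Corollary 2.6 of \cite{BK2}, to the antiliminal case and treat the type I (hence $\mathbb{K}(\mathcal{H})$) possibility separately. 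Everything else is a direct bookkeeping application of the quoted results; no new estimate is needed beyond what Proposition \ref{Voi} already provides.
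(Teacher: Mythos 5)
Your first assertion is handled exactly as the paper intends: separable prime implies primitive, antiliminality forces the faithful irreducible representation to be essential (via IV.1.1.7 in \cite{BK1}), Proposition \ref{Voi} then makes it a quasidiagonal representation, and a single faithful irreducible quasidiagonal representation is a separating family in the sense of Theorem \ref{11b}. That part is correct and is precisely the ``combination'' the paper alludes to (it gives no written proof, only the citation to Corollary 2.6 of \cite{BK2}).

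The second assertion contains a genuine error. Your reduction of the non-antiliminal simple case to $\mathcal{A}\cong\mathbb{K}(\mathcal{H})$ is fine (a nonzero abelian element in a simple algebra yields a simple commutative hereditary subalgebra, hence a minimal projection, hence an elementary algebra). But the claim that ``$\mathbb{K}(\mathcal{H})$ is quasidiagonal only when $\mathcal{H}$ is finite-dimensional'' is false: $\mathbb{K}(\mathcal{H})$ is an AF algebra (an inductive limit of matrix algebras with injective corner embeddings), hence quasidiagonal --- indeed strongly and inner quasidiagonal. Concretely, if $P_n$ is any increasing sequence of finite-rank projections converging strongly to $I$, then $\left\Vert P_nT-TP_n\right\Vert\rightarrow0$ and $\left\Vert P_nTP_n\right\Vert\rightarrow\left\Vert T\right\Vert$ for every compact $T$, so the identity representation is already a faithful irreducible quasidiagonal representation. (You may be conflating this with the fact that a single operator such as the unilateral shift need not be quasidiagonal; that is a different notion.) So the contradiction you derive does not exist, and the non-antiliminal case does not go away. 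Fortunately the fix is immediate: in that case $\mathcal{A}\cong\mathbb{K}(\mathcal{H})$ is inner quasidiagonal by the observation just made (or by Remark \ref{inner 1}, since it is an injective inductive limit of matrix algebras), and the antiliminal case is covered by your first part. With that repair the proof goes through and agrees with the intended argument; the alternative you mention in passing (treating the type I possibility separately rather than excluding it) is in fact the route you must take.
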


\begin{remark}
\label{inner 1}In \cite{BK2}, it has been show that an arbitrary inductive
limit (with injective connecting maps) of inner quasidiagonal C*-algebras is
inner quasidiagonal. Every residually finite-dimensional C*-algebra is inner quasidiagonal.
\end{remark}

\subsection{Crossed product C*-algebras by actions of finite groups}

It is well-known that the subalgebras of inner quasidiagonal may not be inner
quasidiagonal again. It implies that the crossed product C*-algebras by an
action of a finite group on an inner quasidiagonal C*-algebra may not be inner
quasidiagonal again.

\begin{remark}
\label{2}Let $\mathcal{A\subseteq B}\left(  \mathcal{H}\right)  $ be a
separable inner quasidiagonal C*-algebra and $\alpha:G\rightarrow Aut\left(
\mathcal{A}\right)  $ be an action of finite group $G$ on $\mathcal{A}$.
Suppose $\pi:\mathcal{A\rightarrow B}\left(  \mathcal{K}\right)  $ is a $\ast
$-homomorphism and $\left\{  e_{g,q}\right\}  $ is the family of canonical
matrix units of $\mathcal{B}\left(  l^{2}\left(  G\right)  \right)  $. Then we
define an action $\beta:G\rightarrow Aut(\pi\left(  \mathcal{A}\right)  )$ by
letting
\[
\beta_{g}\left(  \pi\left(  a\right)  \right)  =\pi\left(  \alpha
_{g}(a)\right)  .
\]
Therefore it is not hard to verify that the mapping $\rho:\mathcal{A\rtimes
}_{\alpha}G\rightarrow\mathcal{\pi}\left(  \mathcal{A}\right)
\mathcal{\rtimes}_{\beta}G$ by letting
\[
\rho\left(  I_{\mathcal{H}}\otimes\lambda_{g}\right)  =I_{\mathcal{K}}%
\otimes\lambda_{g}\text{ and }\rho\left(  \sum_{g\in G}\alpha_{g}^{-1}\left(
a\right)  \otimes e_{g,g}\right)  =\sum_{g\in G}\beta_{g}^{-1}\left(
\pi\left(  a\right)  \right)  \otimes e_{g,g}.
\]
is a $\ast$-homomorphism again.
\end{remark}

\begin{theorem}
\label{cha2}Let $\mathcal{A}$ be a unital separable C*-algebra. If there are a
sequence of u.c.p maps $\varphi_{n}:\mathcal{A}\rightarrow\mathcal{M}_{k_{n}%
}(\mathbb{C})$ such that $\left\Vert a\right\Vert =\lim\left\Vert \varphi
_{n}\left(  a\right)  \right\Vert $ and $d(a,\mathcal{A}_{\varphi_{n}%
})\rightarrow0$ for all $a\in\mathcal{A}$ and $\alpha:G\rightarrow\alpha_{g}$
an automorphic representation of finite group $G$ on $\mathcal{A}$ satisfying
that $\mathcal{A}_{\varphi_{n}}$ is $\alpha_{g}$ invariant (i.e. $\alpha
_{g}|_{\mathcal{A}_{\varphi_{n}}}$ is an automorphic of $\mathcal{A}%
_{\varphi_{n}})$ for each $n$ and $g\in G,$ then $\mathcal{A\rtimes}_{\alpha
}G$ is inner quasidiagonal again.
\end{theorem}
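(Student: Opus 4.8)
To prove the theorem I would check that $\mathcal{A}\rtimes_{\alpha}G$ satisfies the criterion of Theorem \ref{inner 2}, i.e. produce u.c.p. maps from $\mathcal{A}\rtimes_{\alpha}G$ onto full matrix algebras that are asymptotically isometric and whose multiplicative domains asymptotically exhaust the algebra. Since $G$ is finite and $\mathcal{A}$ is unital, there is an isometric $\ast$-embedding $\iota:\mathcal{A}\rtimes_{\alpha}G\hookrightarrow\mathcal{A}\otimes\mathcal{B}(l^{2}(G))=M_{|G|}(\mathcal{A})$ with $\iota(\lambda_{g})=I\otimes\lambda_{g}$ and $\iota(a)=\sum_{g\in G}\alpha_{g}^{-1}(a)\otimes e_{g,g}$; this is the identification used in Remark \ref{2}. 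Define
\[
\Phi_{n}:=(\varphi_{n}\otimes\mathrm{id}_{M_{|G|}})\circ\iota:\ \mathcal{A}\rtimes_{\alpha}G\longrightarrow M_{k_{n}}(\mathbb{C})\otimes M_{|G|}(\mathbb{C})=M_{k_{n}|G|}(\mathbb{C}).
\]
Each $\Phi_{n}$ is a composition of u.c.p. maps, hence u.c.p. I will show $\{\Phi_{n}\}$ witnesses inner quasidiagonality of $\mathcal{A}\rtimes_{\alpha}G$.

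For the multiplicative-domain condition, write $\mathcal{B}_{n}=\mathcal{A}_{\varphi_{n}}$ (a unital C*-subalgebra of $\mathcal{A}$, $\alpha$-invariant by hypothesis), so that $\mathcal{B}_{n}\rtimes_{\alpha}G$ is a C*-subalgebra of $\mathcal{A}\rtimes_{\alpha}G$. Because $\varphi_{n}$ is a $\ast$-homomorphism on $\mathcal{B}_{n}$, the amplification $\varphi_{n}\otimes\mathrm{id}_{M_{|G|}}$ is a $\ast$-homomorphism on $\mathcal{B}_{n}\otimes M_{|G|}$; and $\alpha$-invariance of $\mathcal{B}_{n}$ gives $\iota(\mathcal{B}_{n}\rtimes_{\alpha}G)\subseteq\mathcal{B}_{n}\otimes M_{|G|}$, since the entries $\alpha_{g}^{-1}(b)$ of $\iota(b)$ lie in $\mathcal{B}_{n}$ and $1\in\mathcal{B}_{n}$. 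Hence $\Phi_{n}$ restricts to a $\ast$-homomorphism on $\mathcal{B}_{n}\rtimes_{\alpha}G$, so $\mathcal{B}_{n}\rtimes_{\alpha}G\subseteq(\mathcal{A}\rtimes_{\alpha}G)_{\Phi_{n}}$. Finally, $d(x,(\mathcal{A}\rtimes_{\alpha}G)_{\Phi_{n}})\le d(x,\mathcal{B}_{n}\rtimes_{\alpha}G)\to 0$: for $x=\sum_{g}a_{g}\lambda_{g}$ in the algebraic crossed product, choose $b_{g}^{(n)}\in\mathcal{B}_{n}$ with $\|a_{g}-b_{g}^{(n)}\|\le d(a_{g},\mathcal{B}_{n})+1/n\to 0$ and note $\|x-\sum_{g}b_{g}^{(n)}\lambda_{g}\|\le\sum_{g}\|a_{g}-b_{g}^{(n)}\|$; the general case follows by density.

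The substantive point is the norm condition $\|x\|=\lim_{n}\|\Phi_{n}(x)\|$. Since $\Phi_{n}$ is u.c.p. we have $\|\Phi_{n}(x)\|\le\|x\|$, so it suffices to prove $\liminf_{n}\|\Phi_{n}(x)\|\ge\|x\|$. Choosing $y_{n}\in\mathcal{B}_{n}\rtimes_{\alpha}G$ with $\|x-y_{n}\|\to 0$, using $\|\Phi_{n}(x)\|\ge\|\Phi_{n}(y_{n})\|-\|x-y_{n}\|$, and writing $\Phi_{n}(y_{n})=(\varphi_{n}\otimes\mathrm{id}_{M_{|G|}})(\iota(y_{n}))$ with $\iota(y_{n})\in\mathcal{B}_{n}\otimes M_{|G|}$, $\iota(y_{n})\to\iota(x)$, $\|\iota(x)\|=\|x\|$, the whole matter reduces to the following lemma (with $s=|G|$):
\begin{equation*}
w_{n}\in\mathcal{B}_{n}\otimes M_{s}\ \text{and}\ w_{n}\to w\ \text{in}\ \mathcal{A}\otimes M_{s}\quad\Longrightarrow\quad\|(\varphi_{n}\otimes\mathrm{id}_{M_{s}})(w_{n})\|\to\|w\|.
\end{equation*}
To prove this, form the sequence algebra $\mathcal{Q}=(\prod_{n}M_{k_{n}})/(\bigoplus_{n}M_{k_{n}})$ and define $\Pi:\mathcal{A}\to\mathcal{Q}$ by $\Pi(a)=[(\varphi_{n}(b_{n}))_{n}]$, where $b_{n}\in\mathcal{B}_{n}$ with $b_{n}\to a$ (available since $d(a,\mathcal{B}_{n})\to 0$). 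Multiplicativity and $\ast$-preservation of $\varphi_{n}$ on $\mathcal{B}_{n}$ show $\Pi$ is a well-defined $\ast$-homomorphism, and $\|\Pi(a)\|=\limsup_{n}\|\varphi_{n}(b_{n})\|=\|a\|$ because $\|\varphi_{n}(b_{n})\|\le\|b_{n}\|\to\|a\|$ and $\|\varphi_{n}(b_{n})\|\ge\|\varphi_{n}(a)\|-\|a-b_{n}\|\to\|a\|$; thus $\Pi$ is isometric. Consequently $\Pi\otimes\mathrm{id}_{M_{s}}:\mathcal{A}\otimes M_{s}\to\mathcal{Q}\otimes M_{s}=(\prod_{n}M_{sk_{n}})/(\bigoplus_{n}M_{sk_{n}})$ is isometric, and inspecting entries gives $(\Pi\otimes\mathrm{id}_{M_{s}})(w)=[((\varphi_{n}\otimes\mathrm{id}_{M_{s}})(w_{n}))_{n}]$, so $\|w\|=\limsup_{n}\|(\varphi_{n}\otimes\mathrm{id}_{M_{s}})(w_{n})\|$. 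Applying the same construction along a subsequence realizing $\liminf_{n}\|(\varphi_{n}\otimes\mathrm{id}_{M_{s}})(w_{n})\|$ upgrades $\limsup$ to $\lim$, proving the lemma; hence $\{\Phi_{n}\}$ does the job and $\mathcal{A}\rtimes_{\alpha}G$ is inner quasidiagonal by Theorem \ref{inner 2}.

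I expect the lemma above to be the main obstacle. A $\ast$-homomorphism need not be isometric, so one cannot immediately conclude that $\|(\varphi_{n}\otimes\mathrm{id}_{M_{s}})(w_{n})\|$ is close to $\|w_{n}\|$; and the naive functional-calculus attempt — forcing a spectral projection of $(\varphi_{n}\otimes\mathrm{id}_{M_{s}})(w_{n})$ near $\|w\|$ to be nonzero — is circular, since the nonvanishing of that projection is equivalent to the conclusion being sought. The sequence-algebra device resolves this by assembling the family $\{\varphi_{n}|_{\mathcal{B}_{n}}\}$ into a single honest faithful $\ast$-homomorphism $\Pi$, for which amplification by $M_{s}$ automatically preserves norms; the only extra care needed is the passage from $\limsup$ to $\lim$, handled by restricting to a suitable subsequence.
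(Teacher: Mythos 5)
Your proof is correct, and your u.c.p.\ maps are in fact the same as the paper's: the paper builds $\widehat{\varphi}_{n}$ by Stinespring-dilating $\varphi_{n}$ to $\pi_{n}$, pushing the crossed product through the induced map $\rho_{n}$ of Remark \ref{2}, and compressing by $P_{n}\otimes I_{|G|}$, which is exactly your $(\varphi_{n}\otimes\mathrm{id}_{M_{|G|}})\circ\iota$ written in the regular-representation picture; the multiplicative-domain inclusion $\mathcal{A}_{\varphi_{n}}\rtimes_{\alpha}G\subseteq(\mathcal{A}\rtimes_{\alpha}G)_{\widehat{\varphi}_{n}}$ and the distance estimate are also identical. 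Where you genuinely diverge is the norm condition. The paper declares ``without loss of generality'' that it suffices to verify $\lim_n\Vert\widehat{\varphi}_{n}(c)\Vert=\Vert c\Vert$ for monomials $c=\bigl(\sum_{g}\alpha_{g}^{-1}(a)\otimes e_{g,g}\bigr)(I\otimes\lambda_{h})$, where the block-diagonal structure makes the computation immediate; but unlike the distance condition, the norm condition is not preserved under passing from monomials to their linear combinations and limits, so that reduction is at best under-justified. Your sequence-algebra lemma --- assembling $\{\varphi_{n}|_{\mathcal{B}_{n}}\}$ into a single injective $\ast$-homomorphism $\Pi$ into $\bigl(\prod_{n}M_{k_{n}}\bigr)/\bigl(\bigoplus_{n}M_{k_{n}}\bigr)$, amplifying by $M_{|G|}$, and using that injective $\ast$-homomorphisms are isometric, with the subsequence trick to upgrade $\limsup$ to $\lim$ --- handles arbitrary elements of $\mathcal{A}\rtimes_{\alpha}G$ and so actually repairs this gap. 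The trade-off is that the paper's computation is shorter and entirely elementary for the elements it treats, while your argument is longer but gives the statement in the generality the criterion of Theorem \ref{inner 2} requires.
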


\begin{proof}
By Lemma \ref{dixmier}, for each $\varphi_{n},$ there are a Hilbert space
$\mathcal{H}_{n}$ containing a copy of $\mathbb{C}^{k_{n}}$ and a
representation $\pi_{n}:\mathcal{A\rightarrow B}\left(  \mathcal{H}%
_{n}\right)  $ such that $\varphi_{n}\left(  a\right)  =P_{n}\pi_{n}\left(
a\right)  P_{n}$ for $a\in\mathcal{A}$ where $P_{n}$ is the finite-rank
projection from $\mathcal{H}_{n}$ onto $\mathbb{C}^{k_{n}}.$ Then there is a
$\ast$-homomorphism
\[
\rho_{n}:\mathcal{A\rtimes}_{\alpha}G\rightarrow\pi_{n}\left(  \mathcal{A}%
\right)  \mathcal{\rtimes}_{\beta_{n}}G
\]
where $\beta_{n}$ and $\rho_{n}$ are defined in the way introduced by Remark
\ref{2}. Note $P_{n}\otimes I_{|G|}$ is a finite-rank projection where
$I_{|G|}$ is the identity in $\mathcal{M}_{\left\vert G\right\vert }\left(
\mathbb{C}\right)  $ and then the map $\widehat{\varphi}_{n}$ defined by the
form
\[
\widehat{\varphi}_{n}\left(  c\right)  =\left(  P_{n}\otimes I_{|G|}\right)
\rho_{n}\left(  c\right)  \left(  P_{n}\otimes I_{|G|}\right)  \text{ for
every }c\in\mathcal{A\rtimes}_{\alpha}G
\]
is a u.c.p. map. It is obvious that $I_{\mathcal{A}}\otimes\lambda_{g}%
\in\left(  \mathcal{A\rtimes}_{\alpha}G\right)  _{\widehat{\varphi}_{n}}$
where $\left(  \mathcal{A\rtimes}_{\alpha}G\right)  _{\widehat{\varphi}_{n}}$
is the multiplicative domain of $\widehat{\varphi}_{n}$. For $a,b\in
\mathcal{A}_{\varphi_{n}}$, we have $\alpha_{g}(a)$ and $\alpha_{g}\left(
b\right)  $ are both in $\mathcal{A}_{\varphi_{n}}$ by the fact that
$\alpha_{g}|_{\mathcal{A}_{\varphi_{n}}}$ is an automorphism of $\mathcal{A}%
_{\varphi_{n}}$ for each $n$ and $g\in G,$ then
\begin{align*}
&  \widehat{\varphi}_{n}\left(  \sum_{g\in G}\alpha_{g}^{-1}\left(  a\right)
\otimes e_{g,g}\cdot\sum_{g\in G}\alpha_{g}^{-1}\left(  b\right)  \otimes
e_{g,g}\right)  =\left(  P_{n}\otimes I_{|G|}\right)  \rho_{n}\left(
\sum_{g\in G}\alpha_{g}^{-1}\left(  ab\right)  \otimes e_{g,g}\right)  \left(
P_{n}\otimes I_{|G|}\right) \\
&  =\sum_{g\in G}P_{n}\beta_{g}^{-1}\left(  \pi\left(  ab\right)  \right)
P_{n}\otimes e_{g,g}=\sum_{g\in G}P_{n}\pi\left(  \alpha_{g}^{-1}\left(
ab\right)  \right)  P_{n}\otimes e_{g,g}\\
&  =(\sum_{g\in G}P_{n}(\beta_{g}^{-1}(\pi(a))P_{n}\otimes e_{g,g})(\sum_{g\in
G}P_{n}(\beta_{g}^{-1}(\pi(a))P_{n}\otimes e_{g,g})\\
&  =\widehat{\varphi}_{n}(\sum_{g\in G}\alpha_{g}^{-1}\left(  a\right)
\otimes e_{g,g})\widehat{\varphi}_{n}(\sum_{g\in G}\alpha_{g}^{-1}\left(
b\right)  \otimes e_{g,g})
\end{align*}
and it is easily to check
\[
\widehat{\varphi}_{n}\left(  \left(  \sum_{g\in G}\alpha_{g}^{-1}\left(
a\right)  \otimes e_{g,g}\right)  \cdot\left(  I_{\mathcal{A}}\otimes
\lambda_{g}\right)  \right)  =\widehat{\varphi}_{n}\left(  \sum_{g\in G}%
\alpha_{g}^{-1}\left(  a\right)  \otimes e_{g,g}\right)  \widehat{\varphi}%
_{n}\left(  I_{\mathcal{A}}\otimes\lambda_{g}\right)  .
\]
Therefore%
\begin{equation}
\left(  \mathcal{A\rtimes}_{\alpha}G\right)  _{\widehat{\varphi}_{n}}%
\supseteq\mathcal{A}_{\varphi_{n}}\mathcal{\rtimes}_{\alpha}G\text{ for every
}g\in G. \tag{2.1}%
\end{equation}
Since $\left(  I\otimes\lambda_{g}\right)  \left(  I\otimes\lambda_{h}\right)
=I\otimes\lambda_{gh}$ and
\[
\left(  I\otimes\lambda_{h}\right)  \left(  \sum_{g\in G}\alpha_{g}%
^{-1}\left(  a\right)  \otimes e_{g,g}\right)  \left(  I\otimes\lambda
_{h}^{\ast}\right)  =\sum_{g\in G}\alpha_{g}^{-1}\left(  \alpha_{h}\left(
a\right)  \right)  \otimes e_{g,g},
\]
we only need to prove $dist\left(  c,\left(  \mathcal{A\rtimes}_{\alpha
}G\right)  _{\widehat{\varphi}_{n}}\right)  \rightarrow0$ and $\lim
\sup\left\Vert \widehat{\varphi}_{n}\left(  c\right)  \right\Vert =\left\Vert
c\right\Vert $ as $c$ in the form
\[
c=\left(  \sum_{g\in G}\alpha_{g}^{-1}\left(  a\right)  \otimes e_{g,g}%
\right)  \left(  I\otimes\lambda_{h}\right)
\]
without the loss of generality. Then by Theorem \ref{inner 2},
$\mathcal{A\rtimes}_{\alpha}G$ is inner quasidiagonal.

Now for $c=(\sum_{g\in G}\alpha_{g}^{-1}\left(  a\right)  \otimes
e_{g,g})\left(  I\otimes\lambda_{h}\right)  ,$ we have $dist(a,\mathcal{A}%
_{\varphi_{n}})\rightarrow0$ for all $a\in\mathcal{A}$ from the assumption$.$
Then there are $a^{\left(  n\right)  }\in\mathcal{A}_{\varphi_{n}}$ for
$n\in\mathbb{N}$ such that $\left\Vert a-a^{\left(  n\right)  }\right\Vert
\rightarrow0.$ Note
\[
\left(  \sum_{g\in G}\alpha_{g}^{-1}\left(  a^{\left(  n\right)  }\right)
\otimes e_{g,g}\right)  \left(  I\otimes\lambda_{h}\right)  \in\left(
\mathcal{A\rtimes}_{\alpha}G\right)  _{\widehat{\varphi}_{n}}%
\]
by (2.1). Then
\begin{align*}
&  \left\Vert c-\left(  \sum_{g\in G}\alpha_{g}^{-1}\left(  a^{\left(
n\right)  }\right)  \otimes e_{g,g}\right)  \left(  I\otimes\lambda
_{h}\right)  \right\Vert \\
&  =\left\Vert \left(  \sum_{g\in G}\alpha_{g}^{-1}\left(  a-a^{(n)}\right)
\otimes e_{g,g}\right)  \left(  I\otimes\lambda_{h}\right)  \right\Vert
\leq\left\Vert a-a^{(n)}\right\Vert \rightarrow0.
\end{align*}
Therefore $dist\left(  c,\left(  \mathcal{A\rtimes}_{\alpha}G\right)
_{\widehat{\varphi}_{n}}\right)  \rightarrow0$. By $\left\Vert a\right\Vert
=\lim\sup\left\Vert \varphi_{n}\left(  a\right)  \right\Vert $ in the
assumption$,$ we also have%
\begin{align*}
\lim\sup\left\Vert \widehat{\varphi}_{n}\left(  c\right)  \right\Vert  &
=\lim\sup\left\Vert \left(  P_{n}\otimes I_{|G|}\right)  \left(  \sum_{g\in
G}\pi\left(  \alpha_{g}^{-1}\left(  a\right)  \right)  \otimes e_{g,g}\right)
\left(  P_{n}\otimes I_{|G|}\right)  \widehat{\varphi}_{n}\left(
I\otimes\lambda_{h}\right)  \right\Vert \\
&  =\lim\sup\left\Vert \sum_{g\in G}\varphi_{n}\left(  \alpha_{g}^{-1}\left(
a\right)  \right)  \otimes e_{g,g}\right\Vert =\left\Vert \sum_{g\in G}\left(
\alpha_{g}^{-1}\left(  a\right)  \right)  \otimes e_{g,g}\right\Vert \\
&  =\left\Vert \sum_{g\in G}\left(  \alpha_{g}^{-1}\left(  a\right)  \right)
\otimes e_{g,g}\left(  I\otimes\lambda_{h}\right)  \right\Vert =\left\Vert
c\right\Vert
\end{align*}
Hence by Lemma \ref{inner 2}, $\mathcal{A\rtimes}_{\alpha}G$ is inner
quasidiagonal again.
\end{proof}

Even though the result in Theorem \ref{cha2} give us the condition to ensure
the crossed product of inner quasidiagonal C*-algebras to be inner
quasidiagonal again, but the condition is too strong. In \cite{WZ}, Rokhlin
actions have been considered for unital inner quasidiagonal C*-algebras. In
this section, we are going to consider the Rokhlin actions on strongly
quasidiagonal C*-algebras where C*-algebras may not be unital. Note every
strongly quasidiagonal C*-algebra is inner quasidiagonal.

\begin{lemma}
\label{strong 1}Let $\mathcal{A}$ be a strongly quasidiagonal C*-algebra and
$\mathcal{B}\subseteq\mathcal{A}$ a hereditary C*-subalgebra. Then
$\mathcal{B}$ is strongly quasidiagonal again.
\end{lemma}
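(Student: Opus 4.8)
The plan is to verify directly that every $\ast$-representation of $\mathcal{B}$ is quasidiagonal; since $\mathcal{B}$ is separable (being a C*-subalgebra of the separable algebra $\mathcal{A}$), this gives that $\mathcal{B}$ is strongly quasidiagonal. So fix a representation $\pi:\mathcal{B}\rightarrow\mathcal{B}\left(\mathcal{H}\right)$. The idea is to exhibit $\pi$ as a subrepresentation of a representation of $\mathcal{A}$ and then transfer quasidiagonality from $\mathcal{A}$ down to $\mathcal{B}$.

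First I would extend $\pi$ off $\mathcal{B}$. Viewing $\pi$ as a c.c.p. map and passing to the unitization of $\mathcal{A}$ if necessary, Arveson's extension theorem yields a u.c.p. map $\Phi$ from (the unitization of) $\mathcal{A}$ to $\mathcal{B}\left(\mathcal{H}\right)$ with $\Phi|_{\mathcal{B}}=\pi$. Since $\pi$ is a $\ast$-homomorphism, $\mathcal{B}$ is contained in the multiplicative domain of $\Phi$. Take a Stinespring dilation $(\sigma,\mathcal{K},V)$ of $\Phi$ as in Lemma \ref{dixmier}; because $\Phi$ is unital, $V$ is an isometry, so I identify $\mathcal{H}$ with $V\mathcal{H}\subseteq\mathcal{K}$ and let $P=VV^{\ast}$ be the projection of $\mathcal{K}$ onto $\mathcal{H}$, so that $\Phi(x)=P\sigma(x)P$. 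A standard computation with the multiplicative domain (one verifies $\sigma(x)V=V\Phi(x)$ for all $x$ in the multiplicative domain, whence $P=VV^{\ast}$ commutes with $\sigma(x)$ for such $x$) shows that $P$ commutes with $\sigma(\mathcal{B})$. Consequently $\mathcal{H}=P\mathcal{K}$ is a reducing subspace for $\sigma(\mathcal{B})$, and $\sigma(b)|_{\mathcal{H}}=P\sigma(b)P|_{\mathcal{H}}=\Phi(b)=\pi(b)$ for every $b\in\mathcal{B}$.

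Set $\rho:=\sigma|_{\mathcal{A}}$, a (possibly degenerate) representation of $\mathcal{A}$ on $\mathcal{K}$. Because $\mathcal{A}$ is strongly quasidiagonal, $\rho$ is a quasidiagonal representation, so $\rho(\mathcal{A})$ is a quasidiagonal set of operators; hence so is its subset $\rho(\mathcal{B})=\sigma(\mathcal{B})$, meaning there is an increasing net of finite-rank projections $Q_{\lambda}$ on $\mathcal{K}$ tending strongly to $1_{\mathcal{K}}$ with $\|[Q_{\lambda},\sigma(b)]\|\rightarrow 0$ for all $b\in\mathcal{B}$. By the preceding paragraph, $\pi$ is the restriction of $\rho|_{\mathcal{B}}$ to the $\sigma(\mathcal{B})$-reducing subspace $\mathcal{H}$; compressing, the operators $PQ_{\lambda}P$ form an increasing net of finite-rank positive contractions on $\mathcal{H}$ tending strongly to $1_{\mathcal{H}}$, and since $P$ commutes with $\sigma(\mathcal{B})$ one gets $[PQ_{\lambda}P,\pi(b)]=P[Q_{\lambda},\sigma(b)]P|_{\mathcal{H}}$, which goes to $0$ in norm. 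A routine spectral cut-off turns these positive contractions into genuine finite-rank projections with the same two properties, so $\pi$ is quasidiagonal. As $\pi$ was arbitrary, $\mathcal{B}$ is strongly quasidiagonal.

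The one point I expect to be genuinely substantive, rather than formal bookkeeping (Arveson extension, Stinespring, the multiplicative-domain identity), is the very last one: that a subrepresentation --- i.e., the restriction to a reducing subspace --- of a quasidiagonal representation is again quasidiagonal, equivalently that a quasidiagonal set of operators stays quasidiagonal after compression by a projection in its commutant. The delicate ingredient is the passage from the compressed finite-rank positive contractions $PQ_{\lambda}P$ back to finite-rank projections without losing the asymptotic commutation; this is a standard fact about quasidiagonal sets of operators (quasicentral approximate units), but it is where the real content sits.
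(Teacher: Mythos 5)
Your reduction to a single representation, the Arveson extension, the Stinespring dilation, and the multiplicative-domain computation showing that $P=VV^{\ast}$ commutes with $\sigma(\mathcal{B})$ are all fine. The gap sits exactly at the point you flagged as ``standard'': it is \emph{not} true that a quasidiagonal set of operators remains quasidiagonal after compression by a projection in its commutant, i.e.\ a subrepresentation of a quasidiagonal representation need not be quasidiagonal. The unilateral shift $S$ gives a counterexample: $S\oplus S^{\ast}$ is a finite-rank perturbation of the bilateral shift, so $C^{\ast}(S\oplus S^{\ast})$ is a quasidiagonal set of operators on $H\oplus H$, and the projection $1\oplus 0$ lies in its commutant, yet the compression sends $S\oplus S^{\ast}$ to $S$, which is not quasidiagonal (Fredholm index $-1$). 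Concretely, your ``routine spectral cut-off'' cannot be carried out: the finite-rank positive contractions $PQ_{\lambda}P$ do almost commute with $\pi(b)$ and do increase to $1_{\mathcal{H}}$, but there is no general procedure converting almost-commuting finite-rank positive contractions into almost-commuting finite-rank projections --- if there were, the index obstruction above would disappear.

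The structural symptom of this gap is that your argument never uses the hypothesis that $\mathcal{B}$ is hereditary, while the statement is false for arbitrary C*-subalgebras (strong quasidiagonality does not pass to subalgebras in general). The paper's proof uses heredity essentially: it first reduces to \emph{irreducible} representations $\pi$ of $\mathcal{B}$, extends $\pi$ to an irreducible representation $\widetilde{\pi}$ of $\mathcal{A}$, and then invokes heredity (Blackadar II.6.1.9) to identify the essential subspace of $\widetilde{\pi}(\mathcal{B})$ with $\mathcal{H}$, so that $\widetilde{\pi}(\mathcal{B})=\pi(\mathcal{B})\oplus 0|_{\mathcal{H}^{\bot}}$. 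The passage from quasidiagonality of $\pi(\mathcal{B})\oplus 0$ back to $\pi(\mathcal{B})$ is itself a genuine lemma (Lemma 15.5 in \cite{BN}), but it works precisely because the complementary summand is zero; in your setup the complementary summand $\sigma(\mathcal{B})|_{\mathcal{H}^{\bot}}$ is completely uncontrolled, and that is exactly where the shift counterexample lives. To repair your argument you would need to use heredity to force that complementary summand to vanish (and, for non-irreducible $\pi$, to justify the reduction to irreducible representations).
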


\begin{proof}
By Proposition 2.8 in \cite{BK2}, we only need to show $\pi\left(
\mathcal{B}\right)  $ is a quasidiagonal set of operators for any irreducible
representation $\pi$ of $\mathcal{B}$.

Let $\pi:\mathcal{B}\rightarrow\mathcal{B}\left(  \mathcal{H}\right)  $ be an
irreducible representation of $\mathcal{B}.$ Then we can extend $\pi$ to an
irreducible representation $\widetilde{\pi}:\mathcal{A}\rightarrow
\mathcal{B}\left(  \widetilde{\mathcal{H}}\right)  $ with $\mathcal{H}%
\subseteq\widetilde{\mathcal{H}}$ and $\widetilde{\pi}\left(  x\right)
|_{\mathcal{H}}=\pi\left(  x\right)  $ for all $x\in\mathcal{B}$ by
Proposition 2.10.2 in \cite{Dix}. From the fact that $\mathcal{A}$ is strongly
quasidiagonal, we know $\widetilde{\pi}\left(  \mathcal{A}\right)  $ is a
quasidiagonal set of operators and then $\widetilde{\pi}\left(  \mathcal{B}%
\right)  $ is a quasidiagonal set of operators too. Since $\mathcal{B}$ is
hereditary, by II.6.1.9 in \cite{BK1} $\widetilde{\pi}\left(  \mathcal{B}%
\right)  |_{\widehat{\mathcal{H}}}$ is irreducible where $\widehat
{\mathcal{H}}\subseteq$ $\widetilde{\mathcal{H}}$ is the essential subspace
(see II.6.1.5 in \cite{BK1}) of $\widetilde{\pi}\left(  \mathcal{B}\right)  .$
Therefore
\[
\widetilde{\pi}\left(  \mathcal{B}\right)  =\widetilde{\pi}\left(
\mathcal{B}\right)  |_{\widehat{\mathcal{H}}}\oplus0|_{\widehat{\mathcal{H}%
}^{\bot}}.
\]
Note the projection $p_{\mathcal{H}}$ on $\mathcal{H}$ is in $\widetilde{\pi
}\left(  \mathcal{B}\right)  ^{\prime}$, we also have
\[
\widetilde{\pi}\left(  \mathcal{B}\right)  =\pi\left(  \mathcal{B}\right)
\oplus\widetilde{\pi}\left(  \mathcal{B}\right)  |_{\mathcal{H}^{\bot}%
}\text{.}%
\]
Since $\mathcal{H\subseteq}$ $\widehat{\mathcal{H}}$ by the fact that is
$\widehat{\mathcal{H}}$ the essential subspace of $\widetilde{\pi}\left(
\mathcal{B}\right)  $ and $\widetilde{\pi}\left(  \mathcal{B}\right)
|_{\widehat{\mathcal{H}}}$ is irreducible, we have $\widehat{\mathcal{H}%
}=\mathcal{H}$ and $\widetilde{\pi}\left(  \mathcal{B}\right)  |_{\mathcal{H}%
^{\bot}}=0|_{\mathcal{H}^{\bot}}$. It implies that
\[
\widetilde{\pi}\left(  \mathcal{B}\right)  =\pi\left(  \mathcal{B}\right)
\oplus0|_{\mathcal{H}^{\bot}}.
\]

So if $\mathcal{H}^{\bot}=\{0\}$, then $\widetilde{\pi}\left(  \mathcal{B}%
\right)  =\pi\left(  \mathcal{B}\right)  $ is a quasidiagonal set of
operators. Otherwise, by Lemma 15.5 in \cite{BN}, we know that $\pi\left(
\mathcal{B}\right)  $ is a quasidiagonal set of operators on $\mathcal{H}$.
Hence $\mathcal{B}$ is strongly quasidiagonal C*-algebra.
\end{proof}

It is natural to ask whether the similar conclusion holds for inner
quasidiagonal C*-algebras.

\begin{proposition}
\label{inner here}Let $\mathcal{A}$ be a separable C*-algebra. If
$\mathcal{A}$ is inner quasidiagonal and $\mathcal{B}\subseteq\mathcal{A}$ is
a hereditary C*-subalgebra, then $\mathcal{B}$ is inner quasidiagonal again.
\end{proposition}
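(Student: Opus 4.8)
The plan is to reduce to the characterization in Theorem \ref{11b}. Since $\mathcal{A}$ is separable, so is its subalgebra $\mathcal{B}$, and it suffices to produce a separating family of quasidiagonal irreducible representations of $\mathcal{B}$. Applying Theorem \ref{11b} to $\mathcal{A}$, I get a separating family $\{\pi_i:\mathcal{A}\to\mathcal{B}(\mathcal{H}_i)\}_{i\in I}$ of quasidiagonal irreducible representations, so that $\bigcap_{i\in I}\ker\pi_i=0$ and each $\pi_i(\mathcal{A})$ is a quasidiagonal set of operators. The idea is then to pass to $\mathcal{B}$ by restriction: for each $i$ with $\pi_i(\mathcal{B})\neq0$, let $\mathcal{H}_i^0=\overline{\pi_i(\mathcal{B})\mathcal{H}_i}$ be the essential subspace of $\pi_i|_{\mathcal{B}}$ (which is reducing for $\pi_i(\mathcal{B})$, with $\pi_i(\mathcal{B})$ acting as $0$ on its complement), and let $\sigma_i$ be the compression of $\pi_i|_{\mathcal{B}}$ to $\mathcal{H}_i^0$, so that $\pi_i(b)=\sigma_i(b)\oplus 0|_{(\mathcal{H}_i^0)^\perp}$ for $b\in\mathcal{B}$. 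I then want to verify that $\{\sigma_i:\pi_i(\mathcal{B})\neq0\}$ is a separating family of quasidiagonal irreducible representations of $\mathcal{B}$, after which Theorem \ref{11b} finishes the proof.

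For the three properties: irreducibility of $\sigma_i$ follows exactly as in the proof of Lemma \ref{strong 1}, since $\pi_i$ is irreducible and $\mathcal{B}$ is hereditary, so II.6.1.9 in \cite{BK1} gives that the compression of $\pi_i(\mathcal{B})$ to its essential subspace $\mathcal{H}_i^0$ is irreducible. For quasidiagonality, note $\pi_i(\mathcal{B})\subseteq\pi_i(\mathcal{A})$, a quasidiagonal set of operators, hence $\pi_i(\mathcal{B})$ is itself a quasidiagonal set of operators on $\mathcal{H}_i$; writing $\pi_i(\mathcal{B})=\sigma_i(\mathcal{B})\oplus 0|_{(\mathcal{H}_i^0)^\perp}$, one concludes that $\sigma_i(\mathcal{B})$ is a quasidiagonal set of operators on $\mathcal{H}_i^0$---directly when $(\mathcal{H}_i^0)^\perp=\{0\}$, and via Lemma 15.5 in \cite{BN} when $(\mathcal{H}_i^0)^\perp\neq\{0\}$, again mirroring the argument in Lemma \ref{strong 1}. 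For the separating property, observe $\ker(\pi_i|_{\mathcal{B}})=\ker\sigma_i=\mathcal{B}\cap\ker\pi_i$, so $\bigcap_{i:\pi_i(\mathcal{B})\neq0}\ker\sigma_i=\mathcal{B}\cap\bigcap_{i\in I}\ker\pi_i=\{0\}$, since the indices with $\pi_i(\mathcal{B})=0$ contribute $\mathcal{B}$ to the intersection and may be discarded.

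The bookkeeping with kernels and the fact that a subset of a quasidiagonal set of operators is again quasidiagonal are routine. The step that requires care is the passage from $\pi_i(\mathcal{B})$ being a quasidiagonal set of operators on the whole of $\mathcal{H}_i$ to $\sigma_i(\mathcal{B})$ being one on the reducing subspace $\mathcal{H}_i^0$, whose complement may be zero, finite-dimensional, or infinite-dimensional; this is exactly the point handled by Lemma 15.5 in \cite{BN} together with the case split, and it is why the proof should closely parallel the treatment of hereditary subalgebras of strongly quasidiagonal C*-algebras in Lemma \ref{strong 1}. I would also make sure to record why $\mathcal{H}_i^0$ is reducing for $\pi_i(\mathcal{B})$, so that the decomposition $\pi_i(b)=\sigma_i(b)\oplus 0$ is legitimate: for a $\ast$-algebra of operators, the closure of its range and the subspace it annihilates are complementary invariant subspaces.
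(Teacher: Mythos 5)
Your proposal is correct and follows essentially the same route as the paper's proof: restrict the separating family of quasidiagonal irreducible representations of $\mathcal{A}$ to $\mathcal{B}$, compress to the essential subspace, use II.6.1.9 of \cite{BK1} for irreducibility of the compression and the direct-sum decomposition $\pi_i(b)=\sigma_i(b)\oplus 0$ together with the relevant lemma from \cite{BN} for quasidiagonality, and check kernels for the separating property. Your version is in fact slightly more careful than the paper's (explicitly discarding the indices with $\pi_i(\mathcal{B})=0$ and justifying that the essential subspace is reducing), but the mathematical content is identical.
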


\begin{proof}
By Theorem \ref{11b}, we only need to prove there is a separating family of
irreducible quasidiagonal representations of $\mathcal{B}.$

Let $\left\{  \widetilde{\pi}_{n}\right\}  $ be be a separating family of
irreducible quasidiagonal representations of $\mathcal{A}$ on $\widetilde
{\mathcal{H}}_{n}.$ Then $\widetilde{\pi}_{n}\left(  \mathcal{B}\right)
\subseteq\widetilde{\pi}_{n}\left(  \mathcal{A}\right)  $ is a quasidiagonal
set of operators in $\mathcal{B}\left(  \widetilde{\mathcal{H}}_{n}\right)  $.
By II.6.1.9 in \cite{BK1}, $\widetilde{\pi}_{n}\left(  \mathcal{B}\right)
|_{\mathcal{H}_{n}}$ is irreducible where $\mathcal{H}_{n}\subseteq$
$\widetilde{\mathcal{H}}_{n}$ is the essential subspace of $\widetilde{\pi
}_{n}\left(  \mathcal{B}\right)  .$ Therefore $\widetilde{\pi}_{n}\left(
\mathcal{B}\right)  =\widetilde{\pi}_{n}\left(  \mathcal{B}\right)
|_{\mathcal{H}_{n}}\oplus0|_{\mathcal{H}_{n}^{\bot}}.$ So by Lemma 3.10 in
\cite{BN}, we have $\widetilde{\pi}_{n}\left(  \mathcal{B}\right)
|_{\mathcal{H}_{n}}$ is a quasidiagonal set of operators in $\mathcal{B}%
\left(  \mathcal{H}_{n}\right)  $. It implies that $\widetilde{\pi}%
_{n}|_{\mathcal{H}_{n}}$ is a separating family of irreducible quasidiagonal
representation of $\mathcal{B}.$ Therefore $\mathcal{B}$ is inner
quasidiagonal again.
\end{proof}

A closed two-sided ideal $\mathcal{I}$ in a C*-algebra $\mathcal{A}$ is said
to be primitive if $\mathcal{I\neq A}$ and $\mathcal{I}$ is the kernel of an
irreducible representation of $\mathcal{A}$ on some Hilbert space. The
primitive ideal space, $prim\left(  \mathcal{A}\right)  ,$ is the set of all
primitive ideals in $\mathcal{A}.$ For more information about $prim\left(
\mathcal{A}\right)  ,$ we refer the readers to \cite{BK1}.

\begin{lemma}
\label{strong 2}Let $\mathcal{A}$ be a separable C*-algebras. If $\mathcal{A}$
is strongly quasidiagonal, then $\mathcal{A\otimes M}_{n}\left(
\mathbb{C}\right)  $ is strongly quasidiagonal again.
\end{lemma}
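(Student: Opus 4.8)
The plan is to reduce the statement to the characterization of strong quasidiagonality used earlier, namely that a separable C*-algebra is strongly quasidiagonal if and only if $\pi(\cdot)$ is a quasidiagonal set of operators for every irreducible representation $\pi$ (Proposition 2.8 in \cite{BK2}, as invoked in the proof of Lemma \ref{strong 1}). So let $\sigma$ be an arbitrary irreducible representation of $\mathcal{A}\otimes\mathcal{M}_n(\mathbb{C})$ on a Hilbert space $\mathcal{K}$; I must show $\sigma(\mathcal{A}\otimes\mathcal{M}_n(\mathbb{C}))$ is a quasidiagonal set of operators. Since $\mathcal{M}_n(\mathbb{C})$ is a finite-dimensional simple C*-algebra, every irreducible representation of the tensor product factors: there is an irreducible representation $\pi$ of $\mathcal{A}$ on a Hilbert space $\mathcal{H}$ with $\mathcal{K}\cong\mathcal{H}\otimes\mathbb{C}^n$ and $\sigma\cong\pi\otimes\mathrm{id}_{\mathcal{M}_n(\mathbb{C})}$. (This is the standard fact that irreducible representations of $\mathcal{B}\otimes\mathcal{M}_n$ are exactly $\rho\otimes\mathrm{id}$ for $\rho$ irreducible on $\mathcal{B}$.)

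Next I use that $\mathcal{A}$ is strongly quasidiagonal: the set $\pi(\mathcal{A})\subseteq\mathcal{B}(\mathcal{H})$ is quasidiagonal, so there is an increasing sequence of finite-rank projections $P_m$ on $\mathcal{H}$ with $P_m\to I$ strongly and $\|P_m\pi(a)-\pi(a)P_m\|\to0$ for every $a\in\mathcal{A}$. Then the projections $Q_m := P_m\otimes I_n$ on $\mathcal{H}\otimes\mathbb{C}^n$ are again finite-rank, increase strongly to the identity, and satisfy $\|Q_m\sigma(a\otimes e_{ij})-\sigma(a\otimes e_{ij})Q_m\| = \|(P_m\pi(a)-\pi(a)P_m)\otimes e_{ij}\| = \|P_m\pi(a)-\pi(a)P_m\|\to0$ for each matrix unit $e_{ij}$ and each $a\in\mathcal{A}$. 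Since such elementary tensors span a dense subalgebra of $\mathcal{A}\otimes\mathcal{M}_n(\mathbb{C})$ and the asymptotic-commutation property passes from a generating set to its closed linear span, the sequence $(Q_m)$ witnesses that $\sigma(\mathcal{A}\otimes\mathcal{M}_n(\mathbb{C}))$ is a quasidiagonal set of operators. As $\sigma$ was arbitrary, Proposition 2.8 of \cite{BK2} gives that $\mathcal{A}\otimes\mathcal{M}_n(\mathbb{C})$ is strongly quasidiagonal.

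The only genuinely delicate point is the factorization of irreducible representations of the tensor product; everything else is a routine "amplify the quasidiagonalizing projections by $\otimes I_n$" computation. One should also double-check separability of $\mathcal{A}\otimes\mathcal{M}_n(\mathbb{C})$, which is immediate since $\mathcal{A}$ is separable and $\mathcal{M}_n(\mathbb{C})$ is finite-dimensional, so the characterization via irreducible representations applies in the first place. If one prefers to avoid the representation-theoretic factorization, an alternative is to note that $\mathcal{A}$ sits inside $\mathcal{A}\otimes\mathcal{M}_n(\mathbb{C})$ as a hereditary-up-to-stable-isomorphism piece and argue through corners, but the direct tensor argument above is cleaner and is the route I would take.
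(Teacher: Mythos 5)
Your proof is correct, but it takes a genuinely different (and somewhat more self-contained) route than the paper. The paper only extracts from Blackadar IV.3.4.25 the fact that $\ker\pi=\mathcal{I}\otimes\mathcal{M}_{n}(\mathbb{C})$ for a primitive ideal $\mathcal{I}$ of $\mathcal{A}$; it then builds an irreducible $\widetilde{\pi}$ of $\mathcal{A}$ with kernel $\mathcal{I}$ and invokes two results of Hadwin: Corollary 4 of \cite{HD} (quasidiagonality of an irreducible representation depends only on its kernel, a consequence of Voiculescu's theorem) to pass from $\pi$ to $\widetilde{\pi}\otimes id$, and Lemma 14 of \cite{HD} to see that $\widetilde{\pi}\otimes id$ is quasidiagonal. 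You instead use the stronger (but still standard) fact that every irreducible representation of $\mathcal{A}\otimes\mathcal{M}_{n}(\mathbb{C})$ is \emph{unitarily equivalent} to $\widetilde{\pi}\otimes id$, which lets you bypass Hadwin's Corollary 4 entirely, and you reprove the relevant special case of his Lemma 14 by hand via the amplified projections $P_{m}\otimes I_{n}$. What your approach buys is independence from Voiculescu's approximate-equivalence machinery; what it costs is that you must justify the unitary-equivalence form of the factorization (via the matrix units $\sigma(1\otimes e_{ij})$ in the multiplier algebra and the corner $e_{11}(\mathcal{A}\otimes\mathcal{M}_{n})e_{11}\cong\mathcal{A}$), whereas the paper only needs the primitive-ideal computation. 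Your auxiliary points are also fine: $\mathcal{H}$ is separable since $\pi$ is cyclic and $\mathcal{A}$ is separable, so the sequential form of quasidiagonality with $P_{m}\nearrow I$ strongly is legitimate, and asymptotic commutation does pass to the closed linear span of the elementary tensors by a $3\varepsilon$ estimate.
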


\begin{proof}
By IV.3.4.25 in \cite{BK1}, we know that every primitive ideal in
$\mathcal{A\otimes M}_{n}\left(  \mathbb{C}\right)  $ is in the form
$\mathcal{I\otimes M}_{n}\left(  \mathbb{C}\right)  $ where $\mathcal{I}$ is a
primitive ideal of $\mathcal{A}.$ Hence for an arbitrary irreducible
representation $\pi$ of $\mathcal{A\otimes M}_{n}\left(  \mathbb{C}\right)  ,$
we can find a representation $\widetilde{\pi}:\mathcal{A}\rightarrow
\mathcal{B}\left(  \mathcal{H}\right)  $ and a primitive ideal $\mathcal{I}$
such that $\ker\left(  \widetilde{\pi}\right)  =\mathcal{I}$ and $\ker
(\pi)=\mathcal{I\otimes M}_{n}\left(  \mathbb{C}\right)  .$ Since
$\mathcal{A}$ is strongly quasidiagonal, we have $\widetilde{\pi}\left(
\mathcal{A}\right)  $ is a quasidiagonal set of operators in $\mathcal{B}%
\left(  \mathcal{H}\right)  $. Now we have
\[
\ker\left(  \widetilde{\pi}\otimes id\right)  =\ker\left(  \pi\right)
\]
where $id$ is the identity representation of $\mathcal{M}_{n}\left(
\mathbb{C}\right)  .$ Note $\widetilde{\pi}\otimes id$ and $\pi$ are both
irreducible, then by Corollary 4 in \cite{HD} we know that $\pi$ is
quasidiagonal precisely when $\widetilde{\pi}\otimes id$ is. By Lemma 14 in
\cite{HD}, $\widetilde{\pi}\otimes id$ is quasidiagonal since $\mathcal{A}$ is
strongly quasidiagonal. It follows that $\pi\mathcal{\ }$is quasidiagonal.
Therefore $\mathcal{A\otimes M}_{n}\left(  \mathbb{C}\right)  $ is strongly
quasidiagonal by Proposition 2.8 in \cite{BK2}.
\end{proof}

For showing our next result, we need the next concept.

\begin{definition}
(Definition 3, \cite{SL}) Let $\mathcal{S}$ be a class of C*-algebras. A
C*-algebra $\mathcal{A}$ is called a local $\mathcal{S}$-algebra if for every
finite subset $\mathcal{F\subseteq A}$ and $\forall\varepsilon>0$ there exist
a C*-algebra$\mathcal{\ B}$ in $\mathcal{S}$ and a *-homomorphism
$\varphi:\mathcal{B\rightarrow A}$ such that $dist\left(  x,\varphi\left(
\mathcal{B}\right)  \right)  <\varepsilon$ for all $x\in\mathcal{A}.$ When
$\mathcal{A}$ is unital and the C*-algebras in $\mathcal{S}$ and the $\ast
$-homomorphism are unital, the C*-algebra $\mathcal{A}$ is called unital local
$\mathcal{S}$-algebra.
\end{definition}

\begin{lemma}
\label{strong 3}If $\mathcal{S}$ is a class of strongly quasidiagonal
C*-algebras, then each local $\mathcal{S}$-algebra is strongly quasidiagonal again.
\end{lemma}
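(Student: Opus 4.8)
The plan is to use Proposition 2.8 in \cite{BK2}, i.e.\ to show that for an arbitrary irreducible representation $\pi:\mathcal{A}\rightarrow\mathcal{B}(\mathcal{H})$ of a local $\mathcal{S}$-algebra $\mathcal{A}$, the set $\pi(\mathcal{A})$ is a quasidiagonal set of operators on $\mathcal{H}$. Since quasidiagonality of a set of operators is a local property --- it suffices to produce, for every finite subset $T\subseteq\pi(\mathcal{A})$, every finite-dimensional subspace $E\subseteq\mathcal{H}$ and every $\varepsilon>0$, a finite-rank projection $Q$ with $\|Qv-v\|<\varepsilon$ for $v$ in the unit ball of $E$ and $\|Qt-tQ\|<\varepsilon$ for $t\in T$ --- I would reduce matters to approximating finitely many elements of $\pi(\mathcal{A})$.

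First I would fix a finite set $x_1,\dots,x_m\in\mathcal{A}$, a finite-dimensional $E\subseteq\mathcal{H}$ with unit vectors $\xi_1,\dots,\xi_r$ spanning it, and $\varepsilon>0$. Applying the local $\mathcal{S}$-algebra hypothesis to the finite subset $\{x_1,\dots,x_m\}$ and a small $\delta$ (to be chosen), I obtain $\mathcal{B}\in\mathcal{S}$ and a $\ast$-homomorphism $\varphi:\mathcal{B}\rightarrow\mathcal{A}$ with $\mathrm{dist}(x_i,\varphi(\mathcal{B}))<\delta$; pick $b_i\in\mathcal{B}$ with $\|x_i-\varphi(b_i)\|<\delta$. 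Now consider the composite representation $\pi\circ\varphi:\mathcal{B}\rightarrow\mathcal{B}(\mathcal{H})$. This is a (possibly degenerate, possibly reducible) representation of $\mathcal{B}$; restricting to its essential subspace and decomposing, and using that $\mathcal{B}$ is strongly quasidiagonal (so \emph{every} representation of $\mathcal{B}$ is quasidiagonal, hence $(\pi\circ\varphi)(\mathcal{B})$ is a quasidiagonal set of operators --- the nonessential part contributes only zeros, which is harmless, and Lemma~15.5 / Lemma~3.10 in \cite{BN} handle passing between the subspace and the whole space exactly as in the proofs of Lemma~\ref{strong 1} and Proposition~\ref{inner here}), there is a finite-rank projection $Q$ on $\mathcal{H}$ with $\|Q\xi_j-\xi_j\|<\varepsilon$ for all $j$ and $\|Q\,(\pi\varphi)(b_i)-(\pi\varphi)(b_i)\,Q\|<\varepsilon/3$ for all $i$.

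Finally I would do the perturbation estimate: since $\|\pi(x_i)-(\pi\varphi)(b_i)\|\le\|x_i-\varphi(b_i)\|<\delta$, we get
\[
\|Q\pi(x_i)-\pi(x_i)Q\|\le\|Q(\pi\varphi)(b_i)-(\pi\varphi)(b_i)Q\|+2\delta<\tfrac{\varepsilon}{3}+2\delta\le\varepsilon
\]
provided $\delta\le\varepsilon/3$. Hence $\pi(\mathcal{A})$ is a quasidiagonal set of operators, and by Proposition 2.8 in \cite{BK2}, $\mathcal{A}$ is strongly quasidiagonal. I expect the only genuinely delicate point to be the handling of the representation $\pi\circ\varphi$ of $\mathcal{B}$: it need not be essential or irreducible, so one must invoke strong quasidiagonality of $\mathcal{B}$ in its full generality (all representations are quasidiagonal, not merely the irreducible ones) and be careful --- as in Lemma~\ref{strong 1} --- that restricting to the essential subspace and adding a zero summand preserves quasidiagonality of the operator set; this is exactly where Lemmas~15.5 and~3.10 of \cite{BN} are used. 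Everything else is a routine $\varepsilon/3$ argument.
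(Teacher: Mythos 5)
Your proof is correct and follows essentially the same route as the paper's: pull back along the $\ast$-homomorphism $\varphi$ from the local approximation, use strong quasidiagonality of $\mathcal{B}$ to see that $(\pi\circ\varphi)(\mathcal{B})$ is a quasidiagonal set of operators, and finish with a perturbation ($\varepsilon/3$) estimate. The only difference is cosmetic: you first reduce to irreducible representations via Proposition 2.8 of \cite{BK2}, whereas the paper works with an arbitrary representation $\pi$ of $\mathcal{A}$ directly (which makes your care about degeneracy of $\pi\circ\varphi$ a valid but minor refinement of the same argument).
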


\begin{proof}
Let $\mathcal{A}$ be a local $\mathcal{S}$-algebra. Then for every finite set
$\mathcal{F}\subseteq\mathcal{A}$ and $\varepsilon>0,$ there are a strongly
quasidiagonal C*-algebra $\mathcal{B}$ and a *-homomorphism $\varphi
:\mathcal{B}\rightarrow\mathcal{A}$ such that $d\left(  a,\varphi\left(
\mathcal{B}\right)  \right)  <\varepsilon$ for every $a\in\mathcal{F}.$ Hence
we can find a finite subset $\mathcal{D}\subseteq\varphi\left(  \mathcal{B}%
\right)  $ such that for $a\in\mathcal{F}$ there exist $d\in\mathcal{D}$ such
that $\left\Vert a-d\right\Vert <\varepsilon$. Assume $\pi:\mathcal{A}%
\rightarrow$ $\mathcal{B}\left(  \mathcal{H}\right)  $ is a representation of
$\mathcal{A},$ then $\pi\circ\varphi:\mathcal{B}\rightarrow\mathcal{B}\left(
\mathcal{H}\right)  $ is a representation of $\mathcal{B}.$ Therefore
$\pi\left(  \varphi\left(  \mathcal{B}\right)  \right)  $ is a quasidiagonal
set of operators on $\mathcal{H}$ since $\mathcal{B}$ is strongly
quasidiagonal. It implies that there are a finite-rank projection
$p\in\mathcal{B}\left(  \mathcal{H}\right)  $ and a finite subset
$\mathcal{X\subset H}$ such that $\left\Vert p\pi\left(  d\right)  -\pi\left(
d\right)  p\right\Vert <\varepsilon$ and $\left\Vert p\left(  x\right)
-x\right\Vert <\varepsilon$ for every $d\in\mathcal{D}\subseteq\varphi\left(
\mathcal{B}\right)  $ and $x\in\mathcal{X}.$ Now it is easily to calculate
that for $a\in\mathcal{F}$
\[
\left\Vert p\pi\left(  a\right)  -\pi\left(  a\right)  p\right\Vert
<\left\Vert p\pi\left(  a\right)  -p\pi\left(  d\right)  \right\Vert
+\left\Vert p\pi\left(  d\right)  -\pi\left(  d\right)  p\right\Vert
+\left\Vert \pi\left(  d\right)  p-\pi\left(  a\right)  p\right\Vert
<3\varepsilon
\]
These imply that $\pi\left(  \mathcal{A}\right)  $ is a quasidiagonal set of
operators. Since $\pi$ is arbitrary, we know $\mathcal{A}$ is a strongly
quasidiagonal C*-algebras .
\end{proof}

Now we need to introduce the Rokhlin action on C*-algebras.

\begin{definition}
\label{Rokhlin 2}(Definition 2, \cite{SL}) Let $\mathcal{A}$ be a C*-algebra
and let $\alpha:G\rightarrow Aut\left(  \mathcal{A}\right)  $ be an action of
a finite group $G$ on $\mathcal{A}.$ We say that $\alpha$ has the Rokhlin
property if for any $\varepsilon>0$ and any finite subset $\mathcal{F\subseteq
A}$ there exist mutually orthogonal positive contractions $\left(
r_{g}\right)  _{g\in G}\subseteq\mathcal{A}$ such that

\begin{enumerate}
\item $\left\Vert \alpha_{g}\left(  r_{h}\right)  -r_{gh}\right\Vert
<\varepsilon$ for all $g,h\in G;$

\item $\left\Vert r_{g}a-ar_{g}\right\Vert <\varepsilon$ for all
$a\in\mathcal{F}$ and $g\in G;$

\item $\left\Vert \left(  \sum_{g\in G}r_{g}\right)  a-a\right\Vert
<\varepsilon$ for all $a\in\mathcal{F}.$
\end{enumerate}
\end{definition}

For showing our main result in this subsection, we need the following result.

\begin{theorem}
\label{Rokhlin 1}(Theorem 2, \cite{SL}) Let $\mathcal{A}$ be a C*-algebra and
$G$ be a finite group. Let $\alpha:G\rightarrow Aut\left(  \mathcal{A}\right)
$ be an action with the Roklin property. Then for any finite subset
$\mathcal{F\subseteq A\rtimes}_{\alpha}G$ and any $\varepsilon>0$ there exist
a positive element $a\in\mathcal{A}$ and a $\ast$-homomorphism $\varphi
:\mathcal{M}_{\left\vert G\right\vert }\left(  \overline{a\mathcal{A}%
a}\right)  \rightarrow\mathcal{A\rtimes}_{\alpha}G$ such that $dist\left(
x,\operatorname{Im}\left(  \varphi\right)  \right)  <\varepsilon$ for all
$x\in\mathcal{F}.$
\end{theorem}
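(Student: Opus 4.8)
The plan is to use the Rokhlin contractions to exhibit, inside $\mathcal{A}\rtimes_{\alpha}G$, a copy of $\mathcal{M}_{|G|}(\overline{a\mathcal{A}a})$ capturing any prescribed finite set up to $\varepsilon$. First I would reduce to the case $\mathcal{F}=\{c\lambda_g:c\in\mathcal{F}_0,\ g\in G\}$ for a finite set $\mathcal{F}_0\subseteq\mathcal{A}$, since such monomials span a dense subspace of $\mathcal{A}\rtimes_{\alpha}G$. Then I would enlarge $\mathcal{F}_0$ to a finite $\alpha$-invariant set $\mathcal{F}_1\subseteq\mathcal{A}$, fix a small $\delta>0$ (to be pinned down at the end in terms of $\varepsilon$ and $|G|$), and apply the Rokhlin property -- conditions (1)--(3) of Definition \ref{Rokhlin 2} -- to the pair $(\mathcal{F}_1,\delta)$, obtaining mutually orthogonal positive contractions $(r_g)_{g\in G}\subseteq\mathcal{A}$. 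Since the $r_g$ are honestly orthogonal, so are the $r_g^{1/2}$; in particular $\bigl(\sum_{g}r_g^{1/2}\bigr)^{2}=\sum_{g}r_g$, and $\sum_{g}r_g$ acts as an approximate unit on $\mathcal{F}_1$.

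Next I would set $B:=\overline{r_1\mathcal{A}r_1}$ and consider the linear map $\varphi_0\colon\mathcal{M}_{|G|}(B)\to\mathcal{A}\rtimes_{\alpha}G$ determined on matrix units by $\varphi_0(e_{g,h}\otimes b)=\lambda_g b\lambda_h^{\ast}=\alpha_g(b)\lambda_{gh^{-1}}$ for $b\in B$. A direct computation shows $\varphi_0$ is $\ast$-preserving and that $\varphi_0(e_{g,h}\otimes b)\,\varphi_0(e_{h,l}\otimes c)=\varphi_0(e_{g,l}\otimes bc)$ holds \emph{exactly} (because $\lambda_h^{\ast}\lambda_h=1$); the only failure of multiplicativity is
\[
\varphi_0(e_{g,h}\otimes b)\,\varphi_0(e_{k,l}\otimes c)=\lambda_g\,b\,\alpha_{h^{-1}k}(c)\,\lambda_{h^{-1}kl^{-1}}\qquad(h\neq k),
\]
and its norm is at most $\|b\,\alpha_{h^{-1}k}(c)\|$. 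This is $O(\delta)$: since $\alpha_{h^{-1}k}(B)=\overline{\alpha_{h^{-1}k}(r_1)\,\mathcal{A}\,\alpha_{h^{-1}k}(r_1)}$, condition (1) gives $\|\alpha_{h^{-1}k}(r_1)-r_{h^{-1}k}\|<\delta$, while $r_1\,r_{h^{-1}k}=0$ because $h^{-1}k\neq1$. Hence $\varphi_0$ is an $O(\delta)$-approximate $\ast$-homomorphism.

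The main obstacle -- where I expect essentially all the work to lie -- is to upgrade $\varphi_0$ to a genuine $\ast$-homomorphism $\varphi\colon\mathcal{M}_{|G|}(\overline{a\mathcal{A}a})\to\mathcal{A}\rtimes_{\alpha}G$ for a suitable positive $a\in\mathcal{A}$, keeping $\mathcal{F}$ within $\varepsilon$ of its image. Two points need care. First, the approximate matrix units $\{\lambda_g r_1\lambda_h^{\ast}\}$ -- equivalently, the inexact orthogonality of the hereditary subalgebras $\alpha_g(B)$ -- must be straightened to a genuine matrix unit system $\{E_{g,h}\}\subseteq\mathcal{A}\rtimes_{\alpha}G$, which one does by the standard stability (semiprojectivity) of the matrix-unit relations, at the cost of small perturbations. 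Second -- and this is the delicate point -- the relevant corner of the crossed product must be identified, up to $\ast$-isomorphism, with $\overline{a\mathcal{A}a}$ for a positive $a\in\mathcal{A}$; one cannot simply take $a=r_1$, because the corner $\overline{r_1(\mathcal{A}\rtimes_{\alpha}G)r_1}$ is a priori strictly larger than $\overline{r_1\mathcal{A}r_1}$ (it also contains contributions $r_1\,\mathcal{A}\,\alpha_h(r_1)\lambda_h$ with $h\neq1$, which need not be norm-small, the products $r_1\,\alpha_h(r_1)$ being only approximately $0$). The natural remedy is to pass to the $\alpha$-invariant positive element $a:=\sum_{g\in G}\alpha_g(r_1)$, so that $\alpha$ genuinely restricts to an action on the $\sigma$-unital corner $\overline{a\mathcal{A}a}$, which contains (approximately) the $r_g$; inside this corner one can hope to produce honest orthogonal Rokhlin projections and then invoke the clean unital-type identification $B\rtimes_{\beta}G\cong\mathcal{M}_{|G|}(p_1 Bp_1)$. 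Carrying this out rigorously, and keeping all the $\delta$-errors consistent uniformly in $|G|$, is the heart of the proof; once it is done, $\varphi_0$ (now built from $a$) becomes a bona fide $\ast$-homomorphism $\varphi$.

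Finally, for the density statement, given $c\lambda_g$ with $c\in\mathcal{F}_0$, I would estimate, using $r_h^{1/2}r_k^{1/2}=\delta_{hk}\,r_h$, the approximate centrality of the $r_h^{1/2}$, a H\"older bound for the square root, and $\|\alpha_h^{-1}(r_h)-r_1\|<\delta$,
\[
c\ \approx\ \Bigl(\sum_{h}r_h\Bigr)c\ \approx\ \sum_{h}r_h^{1/2}\,c\,r_h^{1/2}\ \approx\ \sum_{h}\alpha_h\!\bigl(r_1^{1/2}\,\alpha_h^{-1}(c)\,r_1^{1/2}\bigr)\ =\ \sum_{h}\lambda_h\,b_h\,\lambda_h^{\ast},
\]
where $b_h:=r_1^{1/2}\,\alpha_h^{-1}(c)\,r_1^{1/2}\in B$; hence $c\lambda_g\approx\sum_{h}\lambda_h\,b_h\,\lambda_{g^{-1}h}^{\ast}=\varphi_0\bigl(\sum_{h}e_{h,\,g^{-1}h}\otimes b_h\bigr)$, with accumulated error a fixed multiple of $\delta^{1/2}$. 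Replacing $r_1$ by the element $a$ produced above and $\varphi_0$ by the honest $\varphi$ then yields $\operatorname{dist}\bigl(x,\operatorname{Im}(\varphi)\bigr)<\varepsilon$ for every $x\in\mathcal{F}$, once $\delta$ was chosen small enough at the outset.
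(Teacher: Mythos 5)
The paper does not prove this statement at all: it is quoted verbatim as Theorem 2 of \cite{SL} (Santiago), so the only thing to assess is whether your sketch would actually deliver the theorem, and it does not. The conclusion requires an \emph{exact} $\ast$-homomorphism defined on $\mathcal{M}_{\left\vert G\right\vert}\left(\overline{a\mathcal{A}a}\right)$, and your argument only produces the approximate morphism $\varphi_{0}$; the passage from $\varphi_{0}$ to a genuine $\varphi$ is exactly the content of the theorem, and you defer it (``one can hope to produce honest orthogonal Rokhlin projections\dots carrying this out rigorously\dots is the heart of the proof''). Worse, the repair mechanism you propose cannot work in general: straightening the elements $\lambda_{g}r_{1}\lambda_{h}^{\ast}$ into a true system of matrix units $\{E_{g,h}\}$ via stability of the matrix-unit relations would force $E_{g,g}$ to be projections in $\mathcal{A}\rtimes_{\alpha}G$, and likewise your final identification $B\rtimes_{\beta}G\cong\mathcal{M}_{|G|}(p_{1}Bp_{1})$ presupposes honest Rokhlin projections $p_{g}$. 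But the whole point of this non-unital version of the Rokhlin property (Definition \ref{Rokhlin 2}, with positive contractions rather than projections) is that $\mathcal{A}$, $\overline{a\mathcal{A}a}$ and $\mathcal{A}\rtimes_{\alpha}G$ may be projectionless, in which case no nonzero exact matrix units exist in the crossed product at all; this is precisely why the statement is formulated with hereditary coefficient algebra $\overline{a\mathcal{A}a}$ instead of a unital copy of $\mathcal{M}_{|G|}$. Santiago's actual argument obtains exactness not by perturbing to projections but by a different device (working with exact equivariance/orthogonality achieved in a sequence-algebra type framework and an explicit construction from functions of the Rokhlin elements), so the missing step is not a routine perturbation.

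Two secondary points. First, your assertion that $\Vert b\,\alpha_{h^{-1}k}(c)\Vert=O(\delta)$ uniformly for contractions $b,c$ in the hereditary subalgebras does not follow directly from $\Vert r_{1}\alpha_{h^{-1}k}(r_{1})\Vert<\delta$; one needs an approximate-unit or functional-calculus argument, and the resulting modulus is not linear in $\delta$. Second, your reduction and the final density estimate (the chain $c\approx\sum_{h}\lambda_{h}b_{h}\lambda_{h}^{\ast}$ with $b_{h}=r_{1}^{1/2}\alpha_{h}^{-1}(c)r_{1}^{1/2}$) are fine in spirit and close to what any proof of this theorem does, but they only show that $\operatorname{Im}(\varphi_{0})$ approximately captures $\mathcal{F}$; without the exact $\varphi$ on $\mathcal{M}_{\left\vert G\right\vert}\left(\overline{a\mathcal{A}a}\right)$ the theorem is not proved.
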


The C*-algebra $\mathcal{A}$ in Theorem \ref{Rokhlin 1} may not be unital.
Next result is our main result in this subsection

\begin{theorem}
\label{strong 4}Let $\mathcal{A}$ be a strongly quasidiagonal C*-algebra and
$\alpha$ a Rokhlin action on $\mathcal{A}$ by a finite group $G$. Then the
crossed product $\mathcal{A\rtimes}_{\alpha}G$ is a strongly quasidiagonal
C*-algebra again.
\end{theorem}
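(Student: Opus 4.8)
The plan is to recognize $\mathcal{A}\rtimes_{\alpha}G$ as a local $\mathcal{S}$-algebra for a suitable class $\mathcal{S}$ of strongly quasidiagonal C*-algebras and then invoke Lemma \ref{strong 3}. Concretely, I would let $\mathcal{S}$ be the class of all C*-algebras of the form $\mathcal{M}_{|G|}(\overline{a\mathcal{A}a})$, where $a$ ranges over the positive elements of $\mathcal{A}$. By Theorem \ref{Rokhlin 1}, for every finite subset $\mathcal{F}\subseteq\mathcal{A}\rtimes_{\alpha}G$ and every $\varepsilon>0$ there are a positive element $a\in\mathcal{A}$ and a $\ast$-homomorphism $\varphi:\mathcal{M}_{|G|}(\overline{a\mathcal{A}a})\rightarrow\mathcal{A}\rtimes_{\alpha}G$ with $\mathrm{dist}(x,\operatorname{Im}(\varphi))<\varepsilon$ for all $x\in\mathcal{F}$. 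Since the image of a $\ast$-homomorphism between C*-algebras is automatically norm-closed, $\operatorname{Im}(\varphi)=\varphi(\mathcal{M}_{|G|}(\overline{a\mathcal{A}a}))$ is a genuine C*-subalgebra, so this says precisely that $\mathcal{A}\rtimes_{\alpha}G$ is a local $\mathcal{S}$-algebra.

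It then remains to check that every member of $\mathcal{S}$ is strongly quasidiagonal. For a positive element $a\in\mathcal{A}$, the C*-algebra $\overline{a\mathcal{A}a}$ is the hereditary C*-subalgebra of $\mathcal{A}$ generated by $a$, so by Lemma \ref{strong 1} it is strongly quasidiagonal. Consequently $\mathcal{M}_{|G|}(\overline{a\mathcal{A}a})\cong\overline{a\mathcal{A}a}\otimes\mathcal{M}_{|G|}(\mathbb{C})$ is strongly quasidiagonal by Lemma \ref{strong 2}. Hence $\mathcal{S}$ is a class of strongly quasidiagonal C*-algebras, and Lemma \ref{strong 3} yields that the local $\mathcal{S}$-algebra $\mathcal{A}\rtimes_{\alpha}G$ is strongly quasidiagonal, which is the assertion.

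The only minor points to dispatch along the way are bookkeeping: one should note that $\mathcal{A}\rtimes_{\alpha}G$ is separable, being a crossed product of the separable algebra $\mathcal{A}$ by a finite group, so that "strongly quasidiagonal" is meaningful for it; that $\overline{a\mathcal{A}a}$ is indeed hereditary for $a\ge 0$; and that $\operatorname{Im}(\varphi)$ is closed, as used above. I do not expect any serious obstacle, since the substance of the argument has already been isolated in the preceding results — the genuine work lies in Lemma \ref{strong 1} (passage to hereditary subalgebras), Lemma \ref{strong 2} (stability under matrix amplification), and Lemma \ref{strong 3} (local approximation), together with the structural input of Theorem \ref{Rokhlin 1} that a Rokhlin crossed product is, up to $\varepsilon$, a matrix algebra over a hereditary subalgebra of the coefficient algebra. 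If anything is delicate, it is only making sure the class $\mathcal{S}$ is specified so that Lemma \ref{strong 3} applies verbatim.
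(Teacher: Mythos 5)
Your proposal is correct and is essentially identical to the paper's own proof: both deduce from Theorem \ref{Rokhlin 1} that $\mathcal{A}\rtimes_{\alpha}G$ is a local $\mathcal{S}$-algebra for $\mathcal{S}$ the class of algebras $\mathcal{M}_{|G|}(\overline{a\mathcal{A}a})$, verify these are strongly quasidiagonal via Lemmas \ref{strong 1} and \ref{strong 2}, and conclude with Lemma \ref{strong 3}. Your write-up is merely more explicit about the bookkeeping details.
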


\begin{proof}
By Lemma \ref{strong 1} and Lemma \ref{strong 2}, $\mathcal{M}_{\left\vert
G\right\vert }\left(  \overline{a\mathcal{A}a}\right)  \cong\overline
{a\mathcal{A}a}\otimes\mathcal{M}_{\left\vert G\right\vert }$ is strongly
quasidiagonal and then $\mathcal{A\rtimes}_{\alpha}G$ is locally $\mathcal{S}%
$-algebra where $\mathcal{S}$ is a class of strongly quasidiagonal C*-algebras
by Theorem \ref{Rokhlin 1}. Hence by Lemma \ref{strong 3}, $\mathcal{A\rtimes
}_{\alpha}G$ is strongly quasidiagonal.
\end{proof}

\subsection{Just-infinite C*-algebras}

In this subsection, we will see the relationship between just-infinite
C*-algebra and inner quasidiagonal C*-algebras. A C*-algebra is called
just-infinite if it is infinite-dimensional and all its proper quotient are finite-dimensional.

For each $n\in\{0,1,2,\cdots,\infty\},$ The $T_{0}$-space $Y_{n}$ is defined
to be the disjoint union $Y_{n}=\left\{  0\right\}  \cup Y_{n}^{\prime},$
where $Y_{n}^{\prime}=\left\{  1,2,\cdots,n\right\}  $ is a set with $n$
elements if $n$ is finite, and $Y_{n}^{\prime}=\mathbb{N}$ has countably
infinitely many element if $n=\infty.$ Equip $Y_{n}$ with the topology for
which the closed subsets of $Y_{n}$ are precisely the following set:
$\varnothing,$ $Y_{n}$ and all finite subsets of $Y_{n}^{\prime}.$ So
$prim\left(  \mathcal{A}\right)  $ and a just-infinite C*-algebra
$\mathcal{A}$ are characterized by using $Y_{n}$ in the following theorem.

\begin{theorem}
\label{just-infinite 1.2}(Theorem 3.10 in \cite{GMR}) Let $\mathcal{A}$ be
separable C*-algebra. Then $\mathcal{A}$ is just-infinite if and only if
$prim(\mathcal{A)}$ is homeomorphic to $Y_{n},$ for some $n\in\{0,1,2,\cdots
,\infty\},$ and each non-faithful irreducible representation of $\mathcal{A}$
is finite dimensional (If $n=0,$ we must also require that $\mathcal{A}$ is
infinite dimensional; this is automatic when $n\geq1.$) Moreover:

$(\alpha)$ $prim(\mathcal{A)}=Y_{0}$ if and only if $\mathcal{A}$ is simple.
Every infinite dimensional simple C*-algebra is just-infinite.

$(\beta)$ $prim(\mathcal{A)}=Y_{n},$ for some integer $n\geq1,$ and
$\mathcal{A}$ is just-infinite, if and only if $\mathcal{A}$ contains a simple
non-zero essential infinite dimensional ideal $\mathcal{I}_{0}$ such that
$\mathcal{A}/\mathcal{I}_{0}$ is finite dimensional. In this case, $n$ is
equal to the number of simple summand of $\mathcal{A}/\mathcal{I}_{0}.$

$(\gamma)$ The following conditions are equivalent:

$(i)$ $\mathcal{A}$ is just-infinite and $prim$($\mathcal{A)}=Y_{\infty},$

$(ii)\mathcal{A}$ is just-infinite and RFD,

$(iii)$ $prim(\mathcal{A})$ is an infinite set, all of its infinite subsets
are dense, and $\mathcal{A}/\mathcal{I}$ is finite dimensional, for each
non-zero $I\in prim(\mathcal{A}).$

$(iv)$ $prim(\mathcal{A})$ is an infinite set, the direct sum representation
$\oplus_{i}\pi_{i}$ is faithful for each infinite family $\{\pi_{i})_{i}$ of
pairwise inequivalent irreducible representations of $\mathcal{A}$, and each
non-faithful irreducible representation of $\mathcal{A}$ is finite dimensional.
\end{theorem}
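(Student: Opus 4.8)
The plan is to read everything off the lattice anti-isomorphism between the closed two-sided ideals of $\mathcal{A}$ and the open subsets of $prim(\mathcal{A})$: a closed ideal $I$ corresponds to the open set $U_{I}=\{P\in prim(\mathcal{A}):I\not\subseteq P\}$, its complement $h(I)=\{P:I\subseteq P\}$ is closed and canonically homeomorphic to $prim(\mathcal{A}/I)$, and $I=\bigcap_{P\in h(I)}P$. I will use two standard facts without further comment: a separable C*-algebra is finite dimensional exactly when it is a finite direct sum of full matrix algebras, equivalently when $prim$ of it is finite and discrete and all of its irreducible representations are finite dimensional; and a separable prime C*-algebra is primitive. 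Here a proper quotient of $\mathcal{A}$ means $\mathcal{A}/I$ with $I\neq 0$, so its spectrum is the proper closed set $h(I)$, and the whole statement becomes a translation of ``every proper quotient is finite dimensional'' into a description of the closed subsets of $prim(\mathcal{A})$.

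For the forward direction, suppose $\mathcal{A}$ is just-infinite. I would first show $\mathcal{A}$ is prime: if $I\cap J=0$ with $I,J\neq 0$, then $\mathcal{A}$ embeds in the finite dimensional algebra $\mathcal{A}/I\oplus\mathcal{A}/J$, contradicting $\dim\mathcal{A}=\infty$; hence $\mathcal{A}$ is primitive, so $0\in prim(\mathcal{A})$ and $\overline{\{0\}}=prim(\mathcal{A})$. For nonzero $P\in prim(\mathcal{A})$, the algebra $\mathcal{A}/P$ is a proper quotient, hence finite dimensional, and being primitive it is a single matrix algebra; so $P$ is maximal, $\{P\}$ is closed, and every irreducible representation with kernel $P$ is finite dimensional, which already establishes the condition on non-faithful irreducible representations. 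Finally I would classify the closed sets: a closed $F\ni 0$ satisfies $\bigcap_{P\in F}P=0$, hence $F=prim(\mathcal{A})$; a closed $F\not\ni 0$ has $I:=\bigcap_{P\in F}P\neq 0$, so $\mathcal{A}/I$ is finite dimensional and $F\cong prim(\mathcal{A}/I)$ is a finite subset of $prim(\mathcal{A})\setminus\{0\}$. Thus the closed sets are precisely $prim(\mathcal{A})$ together with the finite subsets of $prim(\mathcal{A})\setminus\{0\}$, i.e. the topology of $Y_{n}$ with $n=|prim(\mathcal{A})\setminus\{0\}|$; separability makes $prim(\mathcal{A})$ second countable, and since every nonempty open set contains $0$ while every singleton in $prim(\mathcal{A})\setminus\{0\}$ is closed, a routine base argument forces $prim(\mathcal{A})\setminus\{0\}$ to be countable, so $n\in\{0,1,2,\cdots,\infty\}$.

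For the converse, assume $prim(\mathcal{A})\cong Y_{n}$, that every non-faithful irreducible representation is finite dimensional, and (if $n=0$) that $\mathcal{A}$ is infinite dimensional. Infinite dimensionality then holds in all cases, since for $n\geq 1$ the space $Y_{n}$ is not discrete and for $n=\infty$ it is infinite. Given $I\neq 0$, the closed set $h(I)\cong prim(\mathcal{A}/I)$ is a proper closed subset of $Y_{n}$, hence a finite subset of $Y_{n}'$; so $\mathcal{A}/I$ has finite discrete spectrum, and decomposing it over the clopen points of that spectrum writes it as a finite direct sum of simple C*-algebras, each of which carries a faithful irreducible representation that is non-faithful, hence finite dimensional, when pulled back to $\mathcal{A}$; so each summand is a matrix algebra and $\mathcal{A}/I$ is finite dimensional. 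Hence $\mathcal{A}$ is just-infinite. For the ``Moreover'' clauses: $(\alpha)$ $Y_{0}$ being a point corresponds to $\mathcal{A}$ having only the ideals $0$ and $\mathcal{A}$, i.e. being simple, and an infinite dimensional simple algebra has $0$ as its unique proper quotient; $(\beta)$ for $n\geq 1$ the point $0$ is open in $Y_{n}$ (its complement $Y_{n}'$ is finite, hence closed), so it is the spectrum of an ideal $\mathcal{I}_{0}$, which is therefore simple, nonzero, and essential (as $\{0\}$ is dense), while $\mathcal{A}/\mathcal{I}_{0}$ has spectrum $Y_{n}'$ and so equals $\bigoplus_{j=1}^{n}\mathcal{M}_{k_{j}}(\mathbb{C})$ by the previous paragraph, which forces $\mathcal{I}_{0}$ to be infinite dimensional and $n$ to be the number of summands; conversely such an $\mathcal{I}_{0}$ exhibits $prim(\mathcal{A})$ as the disjoint union of the open dense one-point set $prim(\mathcal{I}_{0})$ and the closed finite discrete set $prim(\mathcal{A}/\mathcal{I}_{0})$, namely $Y_{n}$; $(\gamma)$ all four conditions are equivalent to $n=\infty$: in $Y_{\infty}$ a subset is dense iff it contains $0$ or is infinite, which gives $(i)\Leftrightarrow(iii)$; the finite dimensional irreducible representations are exactly those with nonzero kernel, their kernels exhaust $prim(\mathcal{A})\setminus\{0\}$, whose closure is all of $prim(\mathcal{A})$ iff it is infinite, so their common kernel is $0$ iff $n=\infty$, giving $(i)\Leftrightarrow(ii)$; and applying the same density remark to the kernel set of an infinite family of pairwise inequivalent irreducible representations gives $(i)\Leftrightarrow(iv)$.

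The step I expect to be the main obstacle is the converse passage from ``finite discrete spectrum with all irreducible representations finite dimensional'' to ``finite dimensional algebra'': this needs the decomposition of a C*-algebra with finite discrete spectrum into a direct sum over its clopen points, the fact that a simple separable C*-algebra admitting a faithful finite dimensional representation must be a full matrix algebra, and a careful check that the representations coming from the summands are genuinely non-faithful on $\mathcal{A}$ so the hypothesis applies. A secondary delicate point is extracting countability of $n$ from separability, which amounts to showing that second countability of $prim(\mathcal{A})$ forces $prim(\mathcal{A})\setminus\{0\}$ to be countable.
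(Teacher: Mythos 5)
This theorem is imported into the paper as Theorem 3.10 of \cite{GMR} and is not proved there, so there is no in-paper argument to compare against; your reconstruction via the lattice anti-isomorphism between closed ideals of $\mathcal{A}$ and open subsets of $prim(\mathcal{A})$ is the standard route (essentially the one taken in \cite{GMR} itself) and it checks out as a sketch. The forward direction (just-infinite $\Rightarrow$ prime $\Rightarrow$ primitive by separability; nonzero primitive ideals are maximal with matrix-algebra quotients; closed sets are either all of $prim(\mathcal{A})$ or finite subsets of $prim(\mathcal{A})\setminus\{0\}$), the converse, and clauses $(\alpha)$--$(\gamma)$ are all sound, and the two points you flag as delicate do work as you indicate: every nonempty open set is cofinite in $Y_{n}'$, so a countable base can omit only countably many points of $Y_{n}'$ and any two points lying in every basic set would be topologically indistinguishable, forcing $Y_{n}'$ countable; and a simple separable C*-algebra with a faithful finite-dimensional irreducible representation is a full matrix algebra. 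The one gloss I would ask you to make explicit is in $(i)\Leftrightarrow(iv)$: pairwise inequivalent irreducible representations need not have distinct kernels in general, so ``the kernel set of an infinite family'' could a priori be finite. Here this is rescued because if some member of the family is faithful the direct sum is trivially faithful, while otherwise every member factors through a matrix algebra, which has a unique irreducible representation up to equivalence, so the kernels of the (non-faithful, pairwise inequivalent) members are genuinely distinct and form an infinite, hence dense, subset of $Y_{\infty}'$.
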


\begin{lemma}
\label{just-infinite 1}(Lemma 3.2 in \cite{GMR}) Every just-infinite
C*-algebra is prime.
\end{lemma}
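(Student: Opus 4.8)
The statement to prove is Lemma \ref{just-infinite 1}: every just-infinite C*-algebra is prime.

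My plan is to argue directly from the definition of just-infinite together with the standard characterization of prime C*-algebras in terms of ideals, using a pigeonhole/intersection argument on proper ideals.

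\medskip

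Here is the sketch. Let $\mathcal{A}$ be just-infinite, so $\mathcal{A}$ is infinite-dimensional and every proper quotient $\mathcal{A}/\mathcal{I}$ (with $\mathcal{I}\neq 0$ a closed two-sided ideal) is finite-dimensional. Suppose toward a contradiction that $\mathcal{A}$ is not prime, so there exist nonzero closed two-sided ideals $\mathcal{I}$ and $\mathcal{J}$ with $\mathcal{I}\cap\mathcal{J}=0$. The natural quotient map $\mathcal{A}\to\mathcal{A}/\mathcal{I}\oplus\mathcal{A}/\mathcal{J}$ has kernel $\mathcal{I}\cap\mathcal{J}=0$, hence is injective, so $\mathcal{A}$ embeds into $\mathcal{A}/\mathcal{I}\oplus\mathcal{A}/\mathcal{J}$. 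Since both $\mathcal{I}$ and $\mathcal{J}$ are nonzero, the just-infinite hypothesis forces $\mathcal{A}/\mathcal{I}$ and $\mathcal{A}/\mathcal{J}$ to be finite-dimensional, hence their direct sum is finite-dimensional; but then $\mathcal{A}$, being isomorphic to a subalgebra of a finite-dimensional algebra, is finite-dimensional, contradicting that $\mathcal{A}$ is infinite-dimensional. Therefore $\mathcal{A}$ is prime.

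\medskip

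I do not anticipate a serious obstacle here: the only slightly delicate point is making sure the quotient-map-into-the-direct-sum is genuinely injective, which follows immediately since its kernel is exactly $\mathcal{I}\cap\mathcal{J}$, and the fact that a C*-subalgebra of a finite-dimensional C*-algebra is finite-dimensional (which is elementary, as any C*-subalgebra of $\mathcal{M}_{n_1}\oplus\cdots\oplus\mathcal{M}_{n_k}$ has dimension at most $n_1^2+\cdots+n_k^2$). One should also note that "proper quotient" in the definition means quotient by a nonzero ideal — the quotient by the zero ideal being $\mathcal{A}$ itself — which is what licenses applying finite-dimensionality to both $\mathcal{A}/\mathcal{I}$ and $\mathcal{A}/\mathcal{J}$ once we know $\mathcal{I},\mathcal{J}\neq 0$. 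Separability is not needed for this argument.
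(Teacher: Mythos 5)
Your argument is correct and is essentially the standard proof of this fact (the paper itself only cites Lemma 3.2 of \cite{GMR} without reproducing a proof, and your embedding of $\mathcal{A}$ into $\mathcal{A}/\mathcal{I}\oplus\mathcal{A}/\mathcal{J}$ is exactly the argument given there). All the steps check out, including the observation that a C*-subalgebra of a finite-dimensional C*-algebra is finite-dimensional and that separability is not needed.
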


It implies that every non-zero ideal $\mathcal{J}$ in just-infinite
C*-algebras $\mathcal{A}$ is essential, i.e., $\mathcal{I\cap J}$
$\neq\left\{  0\right\}  $ for every non-zero closed ideal $\mathcal{I}$ in
$\mathcal{A}.$

By the similar arguments in the previous subsection, we can quickly get the
following two results about just-infinite C*-algebras.

\begin{proposition}
\label{just1}Let $\mathcal{A}$ be a separable C*-algebra. If $\mathcal{A}$ is
just-infinite, then $\mathcal{A\otimes M}_{n}\left(  \mathbb{C}\right)  $ is
just-infinite again.
\end{proposition}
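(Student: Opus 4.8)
The plan is to reduce everything to the standard correspondence between the closed two-sided ideals of $\mathcal{A}$ and those of $\mathcal{A}\otimes\mathcal{M}_n(\mathbb{C})\cong\mathcal{M}_n(\mathcal{A})$. First I would record that every closed two-sided ideal $\mathcal{J}$ of $\mathcal{A}\otimes\mathcal{M}_n(\mathbb{C})$ has the form $\mathcal{I}\otimes\mathcal{M}_n(\mathbb{C})$ for a unique closed two-sided ideal $\mathcal{I}$ of $\mathcal{A}$, and that $\mathcal{J}$ is proper exactly when $\mathcal{I}$ is. By IV.3.4.25 in \cite{BK1} this holds for primitive ideals; to get it for all closed ideals, write $\mathcal{J}$ as the intersection of the primitive ideals of $\mathcal{A}\otimes\mathcal{M}_n(\mathbb{C})$ containing it, say $\mathcal{J}=\bigcap_\lambda\bigl(\mathcal{I}_\lambda\otimes\mathcal{M}_n(\mathbb{C})\bigr)$, and use that under the identification with $\mathcal{M}_n(\mathcal{A})$ this intersection is just $\bigl(\bigcap_\lambda\mathcal{I}_\lambda\bigr)\otimes\mathcal{M}_n(\mathbb{C})$, so $\mathcal{I}=\bigcap_\lambda\mathcal{I}_\lambda$ does the job; properness of $\mathcal{J}$ is equivalent to $(\mathcal{A}/\mathcal{I})\otimes\mathcal{M}_n(\mathbb{C})\neq 0$, i.e.\ to $\mathcal{I}\subsetneq\mathcal{A}$.

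Granting this, the rest is immediate. Given a proper closed two-sided ideal $\mathcal{J}=\mathcal{I}\otimes\mathcal{M}_n(\mathbb{C})$ of $\mathcal{A}\otimes\mathcal{M}_n(\mathbb{C})$ with $\mathcal{I}\subsetneq\mathcal{A}$, we have
\[
\bigl(\mathcal{A}\otimes\mathcal{M}_n(\mathbb{C})\bigr)/\mathcal{J}\cong(\mathcal{A}/\mathcal{I})\otimes\mathcal{M}_n(\mathbb{C}).
\]
Since $\mathcal{A}$ is just-infinite and $\mathcal{I}$ is proper, $\mathcal{A}/\mathcal{I}$ is finite dimensional, hence so is $(\mathcal{A}/\mathcal{I})\otimes\mathcal{M}_n(\mathbb{C})$, of dimension $n^2\dim(\mathcal{A}/\mathcal{I})$. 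Moreover $\mathcal{A}\otimes\mathcal{M}_n(\mathbb{C})$ is infinite dimensional because $\mathcal{A}$ is. Thus $\mathcal{A}\otimes\mathcal{M}_n(\mathbb{C})$ is infinite dimensional with all proper quotients finite dimensional, i.e.\ just-infinite. Alternatively, one could run the argument through Theorem \ref{just-infinite 1.2} as in the proof of Lemma \ref{strong 2}: $\mathcal{I}\mapsto\mathcal{I}\otimes\mathcal{M}_n(\mathbb{C})$ is a homeomorphism $prim(\mathcal{A})\to prim(\mathcal{A}\otimes\mathcal{M}_n(\mathbb{C}))$, so the latter is some $Y_m$, and every non-faithful irreducible representation of $\mathcal{A}\otimes\mathcal{M}_n(\mathbb{C})$ is of the form $\pi\otimes\mathrm{id}$ with $\pi$ a non-faithful irreducible representation of $\mathcal{A}$, hence finite dimensional.

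There is no serious obstacle here; the only step that deserves a sentence of care is the passage from the primitive-ideal version of the matrix-amplification correspondence (IV.3.4.25 in \cite{BK1}) to the version for all closed two-sided ideals, which is exactly the intersection computation indicated above, together with the trivial observation that a matrix amplification is finite dimensional precisely when its base algebra is.
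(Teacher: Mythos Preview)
Your primary argument is correct and more elementary than the paper's: you work straight from the definition of just-infinite via the lattice isomorphism between closed ideals of $\mathcal{A}$ and of $\mathcal{M}_n(\mathcal{A})$, together with $\mathcal{M}_n(\mathcal{A})/\mathcal{M}_n(\mathcal{I})\cong\mathcal{M}_n(\mathcal{A}/\mathcal{I})$. The paper instead establishes the result through the characterization in Theorem~\ref{just-infinite 1.2}: it first exhibits a faithful irreducible representation $\pi\otimes\mathrm{id}$ to get primeness, then uses IV.3.4.25 in \cite{BK1} to identify $prim(\mathcal{A}\otimes\mathcal{M}_n(\mathbb{C}))$ homeomorphically with $prim(\mathcal{A})\cong Y_m$ and to see that each non-faithful irreducible representation is finite dimensional. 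This is precisely your ``alternatively'' paragraph, so you have recovered the paper's proof as your second option. Your direct route is shorter and avoids appealing to Theorem~\ref{just-infinite 1.2}; the paper's route makes the homeomorphism of primitive ideal spaces explicit, which is occasionally useful elsewhere.

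One terminological slip to fix in the direct argument: to verify that every \emph{proper quotient} is finite dimensional you must start from a \emph{nonzero} ideal $\mathcal{J}$, not from an ideal with $\mathcal{J}\subsetneq\mathcal{A}\otimes\mathcal{M}_n(\mathbb{C})$. The implication ``$\mathcal{I}\subsetneq\mathcal{A}$ and $\mathcal{A}$ just-infinite $\Rightarrow\mathcal{A}/\mathcal{I}$ finite dimensional'' fails at $\mathcal{I}=0$. What you actually need (and your ideal correspondence gives) is $\mathcal{J}\neq 0\Leftrightarrow\mathcal{I}\neq 0$, after which $\mathcal{A}/\mathcal{I}$ is finite dimensional by the just-infinite hypothesis. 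With ``proper'' replaced by ``nonzero'' throughout that paragraph, the proof is complete.
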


\begin{proof}
Assume $\pi$ is a faithful irreducible representation of $\mathcal{A}.$ Then
$\pi\otimes id$ is a faithful irreducible representation of $\mathcal{A\otimes
M}_{n}\left(  \mathbb{C}\right)  $ where $id$ is the identity representation
on $\mathcal{M}_{n}\left(  \mathbb{C}\right)  .$ It implies that
$\mathcal{A\otimes M}_{n}\left(  \mathbb{C}\right)  $ is prime. By Blackadar
IV.3.4.25, every primitive ideal $\mathcal{J}$ in $\mathcal{A\otimes M}%
_{n}\left(  \mathbb{C}\right)  $ is in the form $\mathcal{I\otimes M}%
_{n}\left(  \mathbb{C}\right)  $ where $\mathcal{I}$ is a primitive ideal of
$\mathcal{A}$ and $\mathcal{A}/\mathcal{I}$ is finite-dimensional as
$\mathcal{I}$ is nontrivial. It implies that
\[
(\mathcal{A\otimes M}_{n}\left(  \mathbb{C}\right)  )/\mathcal{J}\cong\left(
\mathcal{A}/\mathcal{I}\right)  \otimes\mathcal{M}_{n}\left(  \mathbb{C}%
\right)
\]
is finite-dimensional. So by Theorem \ref{just-infinite 1.2},
\[
Prim\left(  \mathcal{A\otimes M}_{n}\left(  \mathbb{C}\right)  \right)  \cong
Prim\left(  \mathcal{A}\right)  \cong Y_{n}%
\]
for some $n\in\left\{  0,1,2,\cdots,\infty\right\}  $ and then
$\mathcal{A\otimes M}_{n}\left(  \mathbb{C}\right)  $ is just-infinite dimensional.
\end{proof}

\begin{proposition}
\label{just2}Let $\mathcal{A}$ be a just-infinite C*-algebra and
$\mathcal{B}\subseteq\mathcal{A}$ a hereditary C*-subalgebra. Then
$\mathcal{B}$ is just-infinite again.
\end{proposition}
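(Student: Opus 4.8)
The plan is to use the characterization in Theorem \ref{just-infinite 1.2} together with the structural facts already established for hereditary subalgebras. Since $\mathcal{B}\subseteq\mathcal{A}$ is hereditary and $\mathcal{A}$ is separable, there is a standard correspondence (Rieffel, or II.6.1.9 and IV.3.2 in \cite{BK1}) between primitive ideals of $\mathcal{B}$ and primitive ideals of $\mathcal{A}$ not containing $\mathcal{B}$: the map $\mathcal{J}\mapsto\mathcal{J}\cap\mathcal{B}$ is a homeomorphism from $\{\mathcal{J}\in prim(\mathcal{A}):\mathcal{B}\not\subseteq\mathcal{J}\}$ onto $prim(\mathcal{B})$, and this open subset carries the irreducible representation $\widetilde{\pi}$ of $\mathcal{A}$ to its restriction $\widetilde{\pi}|_{\mathcal{H}}$ to the essential subspace, which is again irreducible. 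First I would record this homeomorphism explicitly.

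Next I would identify which primitive ideals of $\mathcal{A}$ survive. By Lemma \ref{just-infinite 1}, $\mathcal{A}$ is prime, so every nonzero ideal is essential; in particular $\mathcal{B}$, being a nonzero hereditary subalgebra, generates an essential ideal, so $\mathcal{B}\not\subseteq\mathcal{J}$ for \emph{every} primitive ideal $\mathcal{J}$ of $\mathcal{A}$ (if $\mathcal{B}\subseteq\mathcal{J}$ then the ideal generated by $\mathcal{B}$ lies in $\mathcal{J}$, contradicting that it is essential and $\mathcal{J}\neq\mathcal{A}$, unless $\mathcal{J}$ meets it trivially — which forces $\mathcal{J}=0$, and then $\mathcal{B}=0$). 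Hence the homeomorphism above is between \emph{all} of $prim(\mathcal{A})$ and \emph{all} of $prim(\mathcal{B})$, so $prim(\mathcal{B})\cong prim(\mathcal{A})\cong Y_n$ for the same $n$.

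It remains to check the second condition in Theorem \ref{just-infinite 1.2}: every non-faithful irreducible representation $\pi$ of $\mathcal{B}$ is finite-dimensional. Such a $\pi$ corresponds, under the correspondence, to a non-faithful irreducible representation $\widetilde{\pi}$ of $\mathcal{A}$ with $\ker\widetilde{\pi}\neq 0$; since $\mathcal{A}$ is just-infinite, $\widetilde{\pi}$ is finite-dimensional, and $\pi=\widetilde{\pi}|_{\mathcal{H}}$ is a subrepresentation of it (acting on the essential subspace), hence also finite-dimensional. Finally, for the $n=0$ case one must separately note that $\mathcal{B}$ is infinite-dimensional: a nonzero hereditary subalgebra of an infinite-dimensional simple C*-algebra is again infinite-dimensional (otherwise $\mathcal{B}$ would contain a minimal projection, forcing $\mathcal{A}$ to have an elementary ideal, contradicting simplicity and infinite-dimensionality). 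Applying Theorem \ref{just-infinite 1.2} to $\mathcal{B}$ then gives that $\mathcal{B}$ is just-infinite.

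The main obstacle I anticipate is verifying the primitive-ideal homeomorphism cleanly enough — in particular, that restriction to the essential subspace sends irreducible representations of $\mathcal{A}$ to irreducible representations of $\mathcal{B}$ and is compatible with the ideal lattice — but this is exactly the content of II.6.1.9 and IV.3.2.x in \cite{BK1}, which were already invoked in Lemma \ref{strong 1} and Proposition \ref{inner here}, so the argument should go through with the same tools.
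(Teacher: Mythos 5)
Your route is genuinely different from the paper's: you go through the primitive-ideal-space characterization of Theorem \ref{just-infinite 1.2}, whereas the paper argues directly that $\mathcal{B}$ is prime (II.6.1.9) and that every proper quotient $\mathcal{B}/(\mathcal{B}\cap\mathcal{I})$ embeds into the finite-dimensional algebra $\mathcal{A}/\mathcal{I}$, where $\mathcal{I}$ is the (nonzero) ideal of $\mathcal{A}$ corresponding to a nonzero ideal of $\mathcal{B}$ under the Rieffel correspondence. Your version contains a concrete error: the claim that $\mathcal{B}\not\subseteq\mathcal{J}$ for \emph{every} $\mathcal{J}\in prim(\mathcal{A})$. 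Essentiality of the ideal generated by $\mathcal{B}$ only says that this ideal meets every nonzero ideal nontrivially; it does not prevent it from being \emph{contained} in a proper primitive ideal. For instance, in $\mathcal{A}=\widetilde{\mathbb{K}}$ (the unitization of the compacts, which is just-infinite of type $(\beta)$ with $prim(\mathcal{A})\cong Y_{1}$), the hereditary subalgebra $\mathcal{B}=\mathbb{C}p$ for a rank-one projection $p$ satisfies $\mathcal{B}\subseteq\mathbb{K}$, and $\mathbb{K}$ is a primitive ideal. So the homeomorphism is only onto the open set $\{\mathcal{J}:\mathcal{B}\not\subseteq\mathcal{J}\}$, and your ``same $n$'' conclusion fails; at best you get $prim(\mathcal{B})\cong Y_{n'}$ for some $n'$ possibly smaller than $n$, since every nonempty open subset of $Y_{n}$ is again of this form.

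Your second problematic claim is that a nonzero hereditary C*-subalgebra of an infinite-dimensional simple C*-algebra is infinite-dimensional: $\mathbb{C}p\subseteq\mathbb{K}(\ell^{2})$ is a counterexample, and a simple C*-algebra with an abelian projection need not have an ``elementary ideal'' distinct from itself ($\mathbb{K}$ is its own elementary ideal). This is not a minor slip, because it is exactly where the statement itself breaks down: $\mathcal{A}=\mathbb{K}$ is just-infinite and $\mathcal{B}=\mathbb{C}p$ is hereditary but one-dimensional, so the proposition as literally stated needs the additional hypothesis that $\mathcal{B}$ be infinite-dimensional. To be fair, the paper's own proof has the same gap --- it verifies that all proper quotients of $\mathcal{B}$ are finite-dimensional but never addresses infinite-dimensionality of $\mathcal{B}$. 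Once that hypothesis is added, the remainder of your argument (restriction to the essential subspace carries irreducible representations to irreducible representations, and non-faithful irreducible representations of $\mathcal{B}$ come from non-faithful, hence finite-dimensional, irreducible representations of $\mathcal{A}$) can be made to work, but the paper's two-line quotient argument reaches the same conclusion with far less machinery.
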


\begin{proof}
By Blackadar II.6.1.9, if $\mathcal{A}$ is prime, then $\mathcal{B}$ is prime
too. Since
\[
\mathcal{B}/\left(  \mathcal{B\cap I}\right)  \mathcal{\subseteq}%
\mathcal{A}/\mathcal{I},
\]
we know that $\mathcal{B}/\left(  \mathcal{B\cap I}\right)  $ is
finite-dimensional. Then by Blackadar II.5.3.5, we know that $\mathcal{B}$ is just-infinite.
\end{proof}

Now we are going to show that a just-finite C*-algebra is quasidiagonal if and
only if it is inner quasidiagonal.

\begin{theorem}
\label{just3}Let $\mathcal{A}$ be a separable C*-algebra. If $\mathcal{A}$ is
just-infinite, then $\mathcal{A}$ is quasidiagonal if and only if it is inner quasidiagonal.
\end{theorem}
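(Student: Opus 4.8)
The plan is to use the structural trichotomy for just-infinite C*-algebras from Theorem \ref{just-infinite 1.2}. Since ``inner quasidiagonal $\Rightarrow$ quasidiagonal'' always holds, the content is the forward direction: assuming $\mathcal{A}$ is just-infinite and quasidiagonal, produce a separating family of irreducible quasidiagonal representations and invoke Theorem \ref{11b}. I would split into the three cases $prim(\mathcal{A})\cong Y_0$, $Y_n$ for finite $n\geq1$, and $Y_\infty$.

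The case $prim(\mathcal{A})=Y_0$ is immediate: then $\mathcal{A}$ is simple (and infinite-dimensional), so by Lemma \ref{inner 1.2} (the ``separable simple quasidiagonal'' clause) $\mathcal{A}$ is inner quasidiagonal. The case $prim(\mathcal{A})=Y_\infty$ is also quick: by Theorem \ref{just-infinite 1.2}$(\gamma)$ such a $\mathcal{A}$ is RFD, and every RFD C*-algebra is inner quasidiagonal by Remark \ref{inner 1}; in fact here quasidiagonality is automatic too, but we are given it. So the real work is the middle case $prim(\mathcal{A})=Y_n$, $1\leq n<\infty$. Here Theorem \ref{just-infinite 1.2}$(\beta)$ gives a simple, non-zero, essential, infinite-dimensional ideal $\mathcal{I}_0\trianglelefteq\mathcal{A}$ with $\mathcal{A}/\mathcal{I}_0$ finite-dimensional; the irreducible representations of $\mathcal{A}$ are the (unique up to equivalence) faithful one $\pi_0$ coming from $\mathcal{I}_0$, together with finitely many finite-dimensional ones $\pi_1,\dots,\pi_m$ factoring through $\mathcal{A}/\mathcal{I}_0$. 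The finite-dimensional $\pi_j$ are automatically quasidiagonal, and together they already separate points of $\mathcal{A}/\mathcal{I}_0$. So it suffices to show $\pi_0$ is quasidiagonal, since then $\{\pi_0,\pi_1,\dots,\pi_m\}$ is a separating family of irreducible quasidiagonal representations and Theorem \ref{11b} finishes. To see $\pi_0$ is quasidiagonal: $\pi_0|_{\mathcal{I}_0}$ is a faithful irreducible representation of the simple C*-algebra $\mathcal{I}_0$. Now $\mathcal{I}_0$, being an ideal of the quasidiagonal C*-algebra $\mathcal{A}$, is a hereditary subalgebra of $\mathcal{A}$, hence quasidiagonal — and more is true: it is simple and quasidiagonal, hence inner quasidiagonal by Lemma \ref{inner 1.2}, so its irreducible representation $\pi_0|_{\mathcal{I}_0}$ is quasidiagonal. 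Finally, a representation of $\mathcal{A}$ restricting to a quasidiagonal representation of an essential ideal is itself quasidiagonal: identifying $\mathcal{A}$ with a subalgebra of the multiplier algebra of $\mathcal{I}_0$ and working in the faithful essential representation $\pi_0$, one checks that a finite-rank projection almost commuting (and almost acting as the identity) on a large finite set of $\mathcal{I}_0$-elements can be taken to almost commute with the finitely many extra generators of $\mathcal{A}$ modulo $\mathcal{I}_0$; alternatively, cite Proposition \ref{Voi} applied to $\pi_0$ as a faithful essential representation of $\mathcal{A}$, reducing quasidiagonality of $\pi_0(\mathcal{A})$ to that of the already-quasidiagonal set $\pi_0(\mathcal{I}_0)$ together with a standard perturbation absorbing the finite-dimensional quotient.

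The main obstacle I anticipate is the last step of the middle case — upgrading quasidiagonality of the ideal's representation $\pi_0|_{\mathcal{I}_0}$ to quasidiagonality of $\pi_0$ on all of $\mathcal{A}$. One must rule out the possibility that adjoining the finitely many generators coming from the finite-dimensional quotient $\mathcal{A}/\mathcal{I}_0$ destroys quasidiagonality. The clean way is: $\pi_0$ is a faithful representation of $\mathcal{A}$ with $\pi_0(\mathcal{A})\cap\mathbb{K}(\mathcal{H})$; if $\mathcal{I}_0$ is infinite-dimensional and simple, $\pi_0|_{\mathcal{I}_0}$ contains no nonzero finite-rank operators, so $\pi_0$ is essential, and then Proposition \ref{Voi} says $\mathcal{A}$ is quasidiagonal iff $\pi_0(\mathcal{A})$ is a quasidiagonal set; since $\mathcal{A}$ is quasidiagonal by hypothesis, $\pi_0(\mathcal{A})$ is quasidiagonal, i.e. $\pi_0$ is a quasidiagonal representation outright. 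That dispenses with the perturbation argument entirely and is the route I would actually write.
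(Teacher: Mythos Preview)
Your treatment of cases $(\alpha)$ and $(\gamma)$ matches the paper exactly. The gap is in case $(\beta)$, in your ``clean'' route to the quasidiagonality of the faithful irreducible representation $\pi_0$. You assert that since $\mathcal{I}_0$ is infinite-dimensional and simple, $\pi_0(\mathcal{I}_0)$ contains no nonzero finite-rank operators, so $\pi_0$ is essential and Proposition~\ref{Voi} applies. This inference fails precisely when $\mathcal{I}_0\cong\mathbb{K}$: then $\pi_0(\mathcal{I}_0)=\mathbb{K}(\mathcal{H})$ and $\pi_0$ is as far from essential as possible. (For simple $\mathcal{I}_0$ with $\pi_0|_{\mathcal{I}_0}$ faithful irreducible, the ideal $\pi_0(\mathcal{I}_0)\cap\mathbb{K}(\mathcal{H})$ is either $0$ or all of $\pi_0(\mathcal{I}_0)$; the latter is exactly the case $\mathcal{I}_0\cong\mathbb{K}$.) Your earlier perturbation sketch has the same defect, since it also presupposes $\pi_0$ is essential; the Toeplitz algebra shows that quasidiagonality of $\pi_0|_{\mathcal{I}_0}$ alone cannot force quasidiagonality of $\pi_0$ without using the hypothesis on $\mathcal{A}$, and Proposition~\ref{Voi} is unavailable here to feed that hypothesis in.

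The paper handles case $(\beta)$ by splitting on whether $\mathcal{A}$ is antiliminal. If it is, Lemma~\ref{inner 1.2} (separable antiliminal prime quasidiagonal $\Rightarrow$ inner quasidiagonal) finishes at once; this is the situation in which your Voiculescu argument is valid, since antiliminality of $\mathcal{A}$ is equivalent here to $\mathcal{I}_0\not\cong\mathbb{K}$. If $\mathcal{A}$ is not antiliminal, primeness together with IV.1.1.7 of \cite{BK1} produces an essential ideal isomorphic to $\mathbb{K}$, and simplicity plus essentiality of $\mathcal{I}_0$ force $\mathcal{I}_0=\mathbb{K}$. Then $\mathcal{A}$ is an extension of the AF algebra $\mathbb{K}$ by the finite-dimensional $\mathcal{A}/\mathcal{I}_0$, hence itself AF, hence inner quasidiagonal by Remark~\ref{inner 1}; note the quasidiagonality hypothesis is not even used in this branch. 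Your proof is easily repaired by isolating the case $\mathcal{I}_0\cong\mathbb{K}$ and disposing of it this way.
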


\begin{proof}
Since every inner quasidiagonal C*-algebra is quasidiagonal. So one direction
of the proof is clear. Now we only need to prove that a just-infinite
C*-algebra $\mathcal{A}$ is inner quasidiagonal as $\mathcal{A}$ is quasidiagonal.

Since $\mathcal{A}$ is separable, $\mathcal{A}$ is just-infinite if and only
if it is in the three cases listed in Theorem \ref{just-infinite 1.2}. In the
case $(\alpha)$, we already know $\mathcal{A}$ is inner quasidiagonal by Lemma
\ref{inner 1.2}. In the case $(\gamma)$, it is obvious that $\mathcal{A}$ is
inner quasidiagonal by Remark \ref{inner 1}. Hence we only need to verify that
$\mathcal{A}$ is inner quasidiagonal in the case $(\beta)$.

By Lemma \ref{just-infinite 1}, we know $\mathcal{A}$ is prime and then every
ideal in $\mathcal{A}$ is essential. Now we consider two cases. First case, we
suppose $\mathcal{A}$ is antiliminal. Then $\mathcal{A}$ is inner
quasidiagonal by Lemma \ref{inner 1.2}. In the second case, we suppose
$\mathcal{A}$ is not antiliminal. Since $\mathcal{A}$ has a faithful
irreducible representation $\pi$ and there is an element $x\in\mathcal{A}$
such that $rank\left(  \pi\left(  x\right)  \right)  \leq1$ by IV.1.1.7 in
\cite{BK1}, then $\mathcal{A}$ has an essential ideal isomorphic to
$\mathbb{K}$. So we may assume $\mathbb{K}\subseteq\mathcal{A}.$ Note
$\mathcal{I}_{0}$ in Theorem \ref{just-infinite 1.2} is essential, so we know
that $\mathbb{K}\cap\mathcal{I}_{0}\neq\{0\}$. by the fact that $\mathcal{I}$
is simple, we have%
\[
\mathbb{K}\cap\mathcal{I}_{0}=\mathbb{K=}\mathcal{I}_{0}.
\]
It follows that $\mathcal{A}$ is an extension of two AF-algebras, it is itself
an AF-algebra. By Remark \ref{inner 1}, $\mathcal{A}$ is inner quasidiagonal.
\end{proof}

\smallskip A separable C*- algebra $\mathcal{A}$ is called MF if it can be
embedded into%
\[%
{\textstyle\prod\limits_{k}}
\mathcal{M}_{n_{k}}\left(  \mathbb{C}\right)  /\sum_{k}\mathcal{M}_{n_{k}%
}\left(  \mathbb{C}\right)
\]
for a sequence of positive integers $\left\{  n_{k}\right\}  _{k=1}%
^{\mathcal{1}}$. Since every nuclear MF algebra is quasidiagonal, we can get
the next result.

\begin{corollary}
Let $\mathcal{A}$ be a unital just-infinite MF-algebra$.$ If $\mathcal{A}$ is
nuclear, then $\mathcal{A}$ is inner quasidiagonal.
\end{corollary}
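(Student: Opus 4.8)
The plan is to reduce the corollary to Theorem \ref{just3} by verifying that a unital just-infinite MF-algebra that is nuclear must be quasidiagonal. The chain of implications I would use is: nuclear $+$ MF $\Rightarrow$ quasidiagonal $\Rightarrow$ (since $\mathcal{A}$ is just-infinite) inner quasidiagonal, where the last step is exactly Theorem \ref{just3}. So the only thing requiring argument is the first implication, and the observation recorded in the text just before the statement---``every nuclear MF algebra is quasidiagonal''---is precisely what carries it.

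First I would recall why a nuclear MF-algebra is quasidiagonal. Since $\mathcal{A}$ is separable and MF, it embeds into $\prod_k \mathcal{M}_{n_k}(\mathbb{C}) / \sum_k \mathcal{M}_{n_k}(\mathbb{C})$; composing with this embedding and the natural coordinate maps gives a sequence of contractive linear (indeed completely positive, after a standard correction) maps into matrix algebras that is asymptotically multiplicative and asymptotically isometric. For a \emph{nuclear} C*-algebra one can upgrade these to honest c.c.p.\ maps $\mathcal{A} \to \mathcal{M}_{n_k}(\mathbb{C})$ with the same asymptotic properties (this is where nuclearity is essential---it supplies the Choi--Effros type lifting / quasidiagonality-of-the-cone arguments), and that is the definition of quasidiagonality. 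I would simply cite this as the known fact stated in the paragraph preceding the corollary rather than reprove it.

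With quasidiagonality of $\mathcal{A}$ in hand, the conclusion is immediate: $\mathcal{A}$ is separable and just-infinite by hypothesis, so Theorem \ref{just3} applies and tells us that $\mathcal{A}$ is inner quasidiagonal. Note that unitality of $\mathcal{A}$ plays no essential role beyond being part of the hypothesis; it is harmless. I do not anticipate a genuine obstacle here---the corollary is a one-line consequence of Theorem \ref{just3} once the standard fact about nuclear MF-algebras is invoked---so the ``hard part'' is really just making sure the cited implication (nuclear MF $\Rightarrow$ QD) is stated in a form the reader can locate; if a self-contained argument is wanted, the place to expand would be the lifting of the asymptotically multiplicative maps to c.c.p.\ maps using nuclearity.
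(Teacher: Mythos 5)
Your proposal is correct and matches the paper's (implicit) argument exactly: the paper derives this corollary from the remark that every nuclear MF algebra is quasidiagonal, combined with Theorem \ref{just3}, noting that MF algebras are separable by definition. No further comment is needed.
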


Combining Proposition \ref{just2}, Proposition \ref{inner here} and Theorem
\ref{just3}, we can quickly get the following corollary.

\begin{corollary}
\label{just4}Let $\mathcal{A}$ be a separable C*-algebra. If $\mathcal{A}$ is
a just-infinite quasidiagonal C*-algebra and $\mathcal{B}\subseteq\mathcal{A}$
is a hereditary C*-subalgebra, then $\mathcal{B}$ is just-infinite
quasidiagonal again.
\end{corollary}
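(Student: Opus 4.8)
The plan is to chain together the three cited results in the obvious way. Let $\mathcal{A}$ be a separable just-infinite quasidiagonal C*-algebra and let $\mathcal{B}\subseteq\mathcal{A}$ be a hereditary C*-subalgebra. First I would apply Proposition \ref{just2}: since $\mathcal{A}$ is just-infinite and $\mathcal{B}$ is hereditary, $\mathcal{B}$ is just-infinite again. Next, I would invoke Theorem \ref{just3} applied to $\mathcal{A}$ itself: since $\mathcal{A}$ is just-infinite and quasidiagonal, it is inner quasidiagonal. Then Proposition \ref{inner here} applies to the inclusion $\mathcal{B}\subseteq\mathcal{A}$: $\mathcal{A}$ is separable and inner quasidiagonal and $\mathcal{B}$ is hereditary, so $\mathcal{B}$ is inner quasidiagonal, hence in particular quasidiagonal. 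Combining, $\mathcal{B}$ is just-infinite and quasidiagonal, which is the claim.

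The only point requiring a word of care is the bookkeeping of separability: Proposition \ref{inner here} and Theorem \ref{just3} both assume the ambient algebra is separable, which is part of the hypothesis on $\mathcal{A}$; and $\mathcal{B}$, being a C*-subalgebra of a separable C*-algebra, is automatically separable, so no hypothesis is lost when we conclude things about $\mathcal{B}$. I do not anticipate any genuine obstacle here — the corollary is a formal consequence of the three quoted statements, and the proof is essentially a one-line composition of implications; the ``hard part'', such as it is, was already absorbed into Theorem \ref{just3} (the structure-theory case analysis via Theorem \ref{just-infinite 1.2}) and into Proposition \ref{inner here} (the essential-subspace argument). So the write-up is simply: $\mathcal{B}$ just-infinite by Proposition \ref{just2}; $\mathcal{A}$ inner quasidiagonal by Theorem \ref{just3}; $\mathcal{B}$ inner quasidiagonal, hence quasidiagonal, by Proposition \ref{inner here}; done.
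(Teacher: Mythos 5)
Your proposal is correct and is exactly the paper's argument: the paper introduces this corollary with the sentence ``Combining Proposition \ref{just2}, Proposition \ref{inner here} and Theorem \ref{just3}, we can quickly get the following corollary,'' which is precisely the chain you describe. Your remark that $\mathcal{B}$ inherits separability from $\mathcal{A}$ is the right (and only) bookkeeping point to check.
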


\begin{remark}
A C*-algebra is called FDI if it has no infinite-dimensional irreducible
representation (\cite{CS}). By Theorem \ref{just3} every just-infinite
quasidiagonal C*-algebras is inner quasidiagonal in the separable case.
Meanwhile, it is easy to see that every FDI C*-algebra is inner quasidiagonal,
and we also know that no FDI C*-algebra is just-infinite by the fact that
just-infinite C*-algebras are prime. Therefore in the separable case the set
of just-infinite quasidiagonal C*-algebras has no intersection with the set of
FDI C*-algebra even though they are both inner quasidiagonal.
\end{remark}

The following result is an analogous conclusion of Theorem \ref{strong 4}.

\begin{theorem}
\label{just5}Let $\mathcal{A}$ be a just-infinite C*-algebra and
$\alpha:G\rightarrow Aut\left(  \mathcal{A}\right)  $ be an action of a finite
group $G$ on $\mathcal{A}$ with the Rokhlin property. Then $\mathcal{A}%
\rtimes_{\alpha}G$ is just-infinite.
\end{theorem}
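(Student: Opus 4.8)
The plan is to run the same strategy that powered Theorem \ref{strong 4}, namely to exploit Theorem \ref{Rokhlin 1} to present $\mathcal{A}\rtimes_\alpha G$ as a local $\mathcal{S}$-algebra, where now $\mathcal{S}$ is a class of just-infinite C*-algebras, and then argue that the local structure, together with primeness, forces just-infiniteness. First I would recall from Theorem \ref{Rokhlin 1} that for every finite $\mathcal{F}\subseteq\mathcal{A}\rtimes_\alpha G$ and $\varepsilon>0$ there is a positive $a\in\mathcal{A}$ and a $\ast$-homomorphism $\varphi:\mathcal{M}_{|G|}(\overline{a\mathcal{A}a})\to\mathcal{A}\rtimes_\alpha G$ with $\mathrm{dist}(x,\mathrm{Im}(\varphi))<\varepsilon$ for all $x\in\mathcal{F}$. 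By Proposition \ref{just2} the hereditary subalgebra $\overline{a\mathcal{A}a}$ is just-infinite, and by Proposition \ref{just1} so is $\mathcal{M}_{|G|}(\overline{a\mathcal{A}a})\cong\overline{a\mathcal{A}a}\otimes\mathcal{M}_{|G|}(\mathbb{C})$. So $\mathcal{A}\rtimes_\alpha G$ is locally approximated by just-infinite C*-algebras.

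Next I would establish that $\mathcal{A}\rtimes_\alpha G$ is prime. One clean way: it is known that a Rokhlin action on a prime (equivalently, for separable algebras, primitive) C*-algebra yields a prime crossed product; alternatively, one can argue directly that any nonzero ideal $\mathcal{J}$ of $\mathcal{A}\rtimes_\alpha G$ meets $\mathcal{A}$ in a nonzero $\alpha$-invariant ideal — which, since $\mathcal{A}$ is prime and hence has essential ideals, must be essential in $\mathcal{A}$ — and from this deduce that two nonzero ideals of the crossed product cannot have zero intersection. The Rokhlin property is exactly what forces $\mathcal{J}\cap\mathcal{A}\neq\{0\}$: averaging a nonzero element of $\mathcal{J}$ against the Rokhlin projections $(r_g)$ produces a nonzero element in the fixed-point-like corner sitting inside $\mathcal{A}$. (Since $\mathcal{A}\rtimes_\alpha G$ is separable, prime is equivalent to primitive, so it has a faithful irreducible representation.)

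Finally I would show the proper quotients are finite-dimensional. Let $\mathcal{J}\neq\{0\}$ be a closed two-sided ideal of $\mathcal{A}\rtimes_\alpha G$. Then $\mathcal{J}\cap\mathcal{A}$ is a nonzero $\alpha$-invariant ideal $\mathcal{I}$ of $\mathcal{A}$; since $\mathcal{A}$ is just-infinite, $\mathcal{A}/\mathcal{I}$ is finite-dimensional, and an $\alpha$-invariant ideal of a finite-dimensional C*-algebra has $(\mathcal{A}/\mathcal{I})\rtimes_\alpha G$ finite-dimensional as well (it is a finite direct sum of matrix algebras, crossed with a finite group). Now $(\mathcal{A}\rtimes_\alpha G)/\mathcal{J}$ is a quotient of $(\mathcal{A}\rtimes_\alpha G)/(\mathcal{I}\rtimes_\alpha G)\cong(\mathcal{A}/\mathcal{I})\rtimes_\alpha G$, hence finite-dimensional; combined with $\mathcal{A}\rtimes_\alpha G$ being infinite-dimensional (it contains $\mathcal{A}$, which is infinite-dimensional), this gives just-infiniteness. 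Alternatively, once primeness is in hand one can invoke the $\mathrm{prim}$-space characterization of Theorem \ref{just-infinite 1.2}: the local $\mathcal{S}$-structure together with primeness pins down $\mathrm{prim}(\mathcal{A}\rtimes_\alpha G)\cong Y_n$ and finite-dimensionality of non-faithful irreducibles.

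The main obstacle is the ideal-correspondence bookkeeping: one must verify cleanly that every nonzero ideal of $\mathcal{A}\rtimes_\alpha G$ arises from (or at least contains the crossed product of) a nonzero $\alpha$-invariant ideal of $\mathcal{A}$, and that this correspondence is exact enough to transport ``finite-codimensional'' from $\mathcal{A}$ to the crossed product. The Rokhlin property is the essential ingredient here — it is what lets one push a nonzero ideal element back into $\mathcal{A}$ — and getting the averaging estimate right (choosing $\varepsilon$ small relative to the norm of the chosen element) is the only genuinely delicate point; everything else is a direct appeal to Propositions \ref{just1}, \ref{just2}, Theorem \ref{Rokhlin 1}, and Theorem \ref{just-infinite 1.2}.
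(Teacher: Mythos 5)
Your core argument --- that every nonzero ideal $\mathcal{J}$ of $\mathcal{A}\rtimes_{\alpha}G$ meets $\mathcal{A}$ in a nonzero $G$-invariant ideal $\mathcal{I}$ (this is precisely where the Rokhlin property is needed, since for general finite-group actions it fails), so that $(\mathcal{A}\rtimes_{\alpha}G)/\mathcal{J}$ is a quotient of $(\mathcal{A}/\mathcal{I})\rtimes_{\alpha}G$ and hence finite-dimensional --- is exactly the paper's proof, which gets the ideal correspondence by citing Proposition 3 of \cite{SL} and the quotient isomorphism from Remark 7.14 of \cite{W}. The one caveat is that your opening strategy (realizing $\mathcal{A}\rtimes_{\alpha}G$ as a local algebra over just-infinite building blocks via Theorem \ref{Rokhlin 1} and hoping that this plus primeness forces just-infiniteness) would not work on its own, since having all proper quotients finite-dimensional is not a property that passes to local limits; but you abandon that route, and the argument you actually carry through is correct and matches the paper's.
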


\begin{proof}
By Proposition 3 in \cite{SL}, we know every nontrivial ideal $\mathcal{J}$ in
$\mathcal{A}\rtimes_{\alpha}G$ has the form $\mathcal{I}\rtimes_{\alpha\left(
\cdot\right)  |_{\mathcal{I}}}G$ for some $G$-invariant ideal $\mathcal{I}$ of
$\mathcal{A}.$ Therefore by Remark 7.14 in \cite{W}, we have $\left(
\mathcal{A}\rtimes_{\alpha}G\right)  /\mathcal{J}\cong\left(  \mathcal{A}%
/\mathcal{I}\right)  \rtimes_{\alpha\left(  \cdot\right)  |_{\mathcal{I}}}G$
is finite-dimensional. It implies that $\mathcal{A}\rtimes_{\alpha}G$ is just-infinite.
\end{proof}

\begin{corollary}
Let $\mathcal{A}$ be a just-infinite quasidiagonal C*-algebra and
$\alpha:G\rightarrow Aut\left(  \mathcal{A}\right)  $ be an action of a finite
group $G$ on $\mathcal{A}$ with the Rokhlin property. Then $\mathcal{A}%
\rtimes_{\alpha}G$ is just-infinite quasidiagonal.
\end{corollary}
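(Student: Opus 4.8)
The plan is to combine the two main results already established, namely Theorem \ref{strong 4} and Theorem \ref{just5}, together with the observation that quasidiagonality passes through crossed products by finite groups with the Rokhlin property. First I would note that $\mathcal{A}$ being just-infinite quasidiagonal gives, by Theorem \ref{just3}, that $\mathcal{A}$ is inner quasidiagonal; but more importantly for this corollary we want to track two properties of $\mathcal{A}\rtimes_\alpha G$ separately: that it is just-infinite, and that it is quasidiagonal.

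For the just-infinite part, I would simply invoke Theorem \ref{just5} directly, since $\mathcal{A}$ is just-infinite and $\alpha$ has the Rokhlin property: hence $\mathcal{A}\rtimes_\alpha G$ is just-infinite. For the quasidiagonal part, the cleanest route is to use the same local-approximation machinery used in the proof of Theorem \ref{strong 4}. By Theorem \ref{Rokhlin 1}, for each finite subset $\mathcal{F}\subseteq \mathcal{A}\rtimes_\alpha G$ and each $\varepsilon>0$ there is a positive $a\in\mathcal{A}$ and a $\ast$-homomorphism $\varphi:\mathcal{M}_{|G|}(\overline{a\mathcal{A}a})\to\mathcal{A}\rtimes_\alpha G$ whose image $\varepsilon$-contains $\mathcal{F}$. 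Now $\overline{a\mathcal{A}a}$ is a hereditary C*-subalgebra of the just-infinite quasidiagonal algebra $\mathcal{A}$, so by Corollary \ref{just4} it is just-infinite quasidiagonal, hence in particular quasidiagonal; and $\mathcal{M}_{|G|}(\overline{a\mathcal{A}a})\cong\overline{a\mathcal{A}a}\otimes\mathcal{M}_{|G|}(\mathbb{C})$ is quasidiagonal as well (tensoring a quasidiagonal C*-algebra with a matrix algebra preserves quasidiagonality). Thus $\mathcal{A}\rtimes_\alpha G$ is a local $\mathcal{S}$-algebra for $\mathcal{S}$ the class of quasidiagonal C*-algebras. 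Since quasidiagonality is a local property (it is closed under local approximation by a subalgebra, exactly as in the argument of Lemma \ref{strong 3} but using a single representation rather than all of them), $\mathcal{A}\rtimes_\alpha G$ is quasidiagonal. Combining the two conclusions gives that $\mathcal{A}\rtimes_\alpha G$ is just-infinite quasidiagonal.

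The main obstacle, if any, is purely bookkeeping: I must be careful that the ``local $\mathcal{S}$-algebra implies $\mathcal{S}$'' step for quasidiagonality is genuinely the one-representation analogue of Lemma \ref{strong 3}, and I should phrase it that way or else cite the standard fact that quasidiagonality is a local property (e.g.\ via the Voiculescu-type characterization, so that $\pi(\mathcal{A}\rtimes_\alpha G)$ is a quasidiagonal set of operators for a single faithful essential representation $\pi$, using Proposition \ref{Voi}). Alternatively, one can bypass the local argument entirely: since $\mathcal{A}\rtimes_\alpha G$ is just-infinite by Theorem \ref{just5}, it suffices to show it is inner quasidiagonal — but that would again route through Theorem \ref{just3}, which needs quasidiagonality as input, so the local-approximation argument for quasidiagonality is the substantive step and cannot be skipped. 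I would therefore present the proof in two short sentences: ``By Theorem \ref{just5}, $\mathcal{A}\rtimes_\alpha G$ is just-infinite. By the same argument as in Theorem \ref{strong 4}, using Corollary \ref{just4} in place of Lemmas \ref{strong 1} and \ref{strong 2} and the fact that quasidiagonality is a local property, $\mathcal{A}\rtimes_\alpha G$ is quasidiagonal. Hence $\mathcal{A}\rtimes_\alpha G$ is just-infinite quasidiagonal.''
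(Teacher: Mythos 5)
Your treatment of the just-infinite half (invoking Theorem \ref{just5}) matches the paper. For the quasidiagonal half, however, your argument has a genuine gap: the step ``$\mathcal{A}\rtimes_{\alpha}G$ is a local $\mathcal{S}$-algebra for $\mathcal{S}$ the class of quasidiagonal C*-algebras, and quasidiagonality is a local property, hence $\mathcal{A}\rtimes_{\alpha}G$ is quasidiagonal'' is false at the level of generality you state it. The local-$\mathcal{S}$ definition (and Theorem \ref{Rokhlin 1}) hands you a $\ast$-homomorphism $\varphi:\mathcal{M}_{|G|}(\overline{a\mathcal{A}a})\rightarrow\mathcal{A}\rtimes_{\alpha}G$ that need not be injective, so the approximating subalgebra of $\mathcal{A}\rtimes_{\alpha}G$ is a \emph{quotient} of $\mathcal{M}_{|G|}(\overline{a\mathcal{A}a})$, and quasidiagonality does not pass to quotients (every C*-algebra is a quotient of its cone, which is always quasidiagonal; so ``local QD-algebra'' in this sense imposes no restriction at all). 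This is precisely why Lemma \ref{strong 3} is stated for \emph{strongly} quasidiagonal algebras: there the composite $\pi\circ\varphi$ is just some representation of $\mathcal{B}$, and strong quasidiagonality guarantees its image is a quasidiagonal set of operators regardless of $\ker\varphi$. The ``single-representation analogue'' you gesture at does not survive the weakening to plain quasidiagonality, because $\pi\circ\varphi$ need be neither faithful nor essential, so Proposition \ref{Voi} does not apply to it. Your argument can be repaired in this particular setting -- since $\overline{a\mathcal{A}a}$ is (by your own appeal to Corollary \ref{just4}) just-infinite quasidiagonal, every quotient of $\mathcal{M}_{|G|}(\overline{a\mathcal{A}a})$ is either isomorphic to the whole algebra, finite-dimensional, or zero, hence quasidiagonal, so the images $\varphi(\mathcal{M}_{|G|}(\overline{a\mathcal{A}a}))$ really are quasidiagonal subalgebras and local approximation by quasidiagonal \emph{subalgebras} does preserve quasidiagonality -- but you must say this; as written the step is unjustified.

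The paper avoids all of this: it observes that $\mathcal{A}\rtimes_{\alpha}G$ embeds into $\mathcal{A}\otimes\mathcal{M}_{|G|}(\mathbb{C})$ (true for \emph{any} action of a finite group, no Rokhlin property needed), which is quasidiagonal since $\mathcal{A}$ is, and quasidiagonality passes to subalgebras. The Rokhlin hypothesis is then used only through Theorem \ref{just5}. You should either adopt that one-line argument for the quasidiagonality, or insert the quotient-of-a-just-infinite-algebra repair described above.
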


\begin{proof}
Since $\mathcal{A}$ is quasidiagonal, we know $\mathcal{A}\rtimes_{\alpha
}G\subseteq\mathcal{A\otimes M}_{|G|}\left(  \mathbb{C}\right)  $ is
quasidiagonal by V.4.2.5 in \cite{BK1}. Hence from Theorem \ref{just5},
$\mathcal{A}\rtimes_{\alpha}G$ is just-infinite quasidiagonal.
\end{proof}

\section{Topological free entropy dimension in just-infinite C*-algebras}

In this section, we will first recall the notion of topological free entropy
dimension $\delta_{top}\left(  x_{1},\cdots,x_{n}\right)  $ for $n$-tuple
$\overrightarrow{x}=\left(  x_{1},\cdots,x_{n}\right)  $ in a unital
C*-algebra. After that, we are going to analyze the topological free entropy
dimension in just-infinite C*-algebras.

\subsection{\bigskip Definition of $\delta_{top}$ for $n$-tuple}

The concept of MF algebras was first introduced by Blackadar and Kirchberg in
\cite{BK}. The topological free entropy dimension is well-defined for the
$n$-tuple $\left(  x_{1},\ldots,x_{n}\right)  $ if the unital C*-algebra
generated by $\left\{  x_{1},\ldots,x_{n}\right\}  $ is an MF C*-algebra.

\begin{definition}
\textbf{The topological free entropy dimension }of $x_{1},\ldots,x_{n}$ is
defined by%
\begin{align*}
&  \delta_{\text{top}}(x_{1},\ldots,x_{n})\\
&  =\underset{\omega\rightarrow0^{+}}{\lim\sup}\underset{\varepsilon
>0,r\in\mathbb{N}}{\inf}\underset{k\rightarrow\mathcal{1}}{\lim\sup}\frac
{\log(\nu_{\mathcal{1}}(\Gamma^{\text{(top)}}(x_{1},\ldots,x_{n}%
;k,\varepsilon,Q_{1},\ldots,Q_{r}),\omega))}{-k^{2}\log\omega}.
\end{align*}

\end{definition}

For the real definition of topological free entropy dimension, we refer the
readers to \cite{DV}, \cite{HLS} or \cite{HLSW}. One of main questions about
topological free entropy dimension is whether it is independent of the
generators of unital MF C*-algebras. In \cite{HLSW}, we can find the following results.

\begin{theorem}
\label{don}(Theorem 4.5,\cite{HLSW}) If $\mathcal{A=}C^{\ast}\left(
x_{1},\cdots,x_{n}\right)  $ is MF-nuclear, then
\[
\delta_{top}\left(  x_{1},\cdots,x_{n}\right)  \leq1.
\]

\end{theorem}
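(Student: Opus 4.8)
Theorem \ref{don} asserts that if $\mathcal{A} = C^*(x_1, \dots, x_n)$ is MF and nuclear, then $\delta_{\text{top}}(x_1, \dots, x_n) \le 1$.

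The plan is to exploit the fact that an MF-nuclear C*-algebra admits, by Voiculescu's quasidiagonality results, a sequence of c.c.p. maps into matrix algebras that are \emph{approximately multiplicative}; nuclearity is what upgrades the generic MF approximation (merely an embedding into $\prod \mathcal{M}_{n_k}/\sum \mathcal{M}_{n_k}$) to one given by asymptotically multiplicative c.p. maps $\psi_k : \mathcal{A} \to \mathcal{M}_{m_k}(\mathbb{C})$ with $\|\psi_k(ab) - \psi_k(a)\psi_k(b)\| \to 0$ and $\|\psi_k(a)\| \to \|a\|$ for all $a, b \in \mathcal{A}$. First I would fix a finite-dimensional operator system spanning the generators and use these $\psi_k$ to produce, for each precision $(k, \varepsilon, Q_1, \dots, Q_r)$, a \emph{near-inclusion}: the whole microstate space $\Gamma^{(\text{top})}(x_1, \dots, x_n; k, \varepsilon, Q_1, \dots, Q_r)$ must lie within a controlled distance of the single orbit of the tuple $(\psi_k(x_1), \dots, \psi_k(x_n))$ under the unitary group $\mathcal{U}(k)$, provided the matrix size $k$ is taken large enough relative to $m_k$. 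This is the standard mechanism: any $k$-dimensional microstate that is approximately multiplicative on the generators must be approximately unitarily equivalent to a block-diagonal inflation of the matrix model coming from $\psi_k$, because approximate multiplicativity plus the norm condition forces the microstate to be close to a representation on $\mathbb{C}^k$, and representations of the (a priori quasidiagonal) algebra on $\mathbb{C}^k$ are classified up to approximate unitary equivalence by their composition with a fixed approximating family.

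The key computation is then a covering-number estimate. The unitary group $\mathcal{U}(k)$ has $\omega$-covering number roughly $(C/\omega)^{k^2}$ (real dimension $k^2$), so the $\omega$-orbit of a single point has $\nu_\infty(\,\cdot\,, \omega) \lesssim (C/\omega)^{k^2}$; taking $\log$, dividing by $-k^2 \log \omega$, and letting $k \to \infty$, $\varepsilon \to 0$, $\omega \to 0$ gives the bound $1$. The main obstacle — and the step requiring the most care — is establishing the near-inclusion of $\Gamma^{(\text{top})}$ into a \emph{bounded number} of unitary orbits (ideally one, but a number independent of $k$ suffices) with the distance going to $0$ as $\varepsilon \to 0$; this is where nuclearity is essential, since it lets one lift the approximately multiplicative structure through the nuclear C*-algebra and compare an arbitrary microstate with the canonical one via a two-sided approximation argument (cf. the techniques in \cite{HLSW} and the quasidiagonality arguments of \cite{HD}). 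One must be careful that the auxiliary matrix size $m_k$ does not grow too fast relative to the microstate dimension $k$; but since we take $\inf$ over $\varepsilon$ and $r$ after $\limsup_{k}$, and the near-inclusion holds for every fixed $(\varepsilon, r)$ once $k$ is large, the dimension count goes through.

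I would organize the write-up as: (i) extract the asymptotically multiplicative matrix models from MF + nuclear; (ii) prove the near-inclusion lemma for $\Gamma^{(\text{top})}$ into a unitary orbit of fixed matrix models; (iii) apply the standard volume estimate for $\mathcal{U}(k)$ to conclude $\delta_{\text{top}} \le 1$. Steps (i) and (iii) are routine given the literature cited in the excerpt; step (ii) is the crux and is where I expect the real work — in particular, verifying that the error in the unitary comparison is controlled uniformly in $k$ and tends to $0$ with $\varepsilon$, which is exactly the content that nuclearity supplies and that fails for general MF algebras.
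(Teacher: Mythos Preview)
The paper does not prove Theorem \ref{don}; it is quoted from \cite{HLSW} (Theorem 4.5 there) and used only as a black box in the two closing corollaries. There is therefore no proof in the present manuscript to compare your proposal against.

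That said, your outline is essentially the argument one finds in \cite{HLS} and \cite{HLSW}: from MF plus nuclearity one obtains quasidiagonality and hence asymptotically multiplicative u.c.p.\ matrix models; the heart of the matter is the ``near-inclusion'' step you correctly isolate as the crux, namely that any two $k$-dimensional microstates satisfying the $(\varepsilon,Q_{1},\dots,Q_{r})$ constraints are approximately unitarily conjugate with error tending to $0$ as $\varepsilon\to 0$; the standard $(C/\omega)^{k^{2}}$ covering estimate for $\mathcal{U}(k)$ then yields $\delta_{\text{top}}\leq 1$. One caveat on your sketch: the sentence ``approximate multiplicativity plus the norm condition forces the microstate to be close to a representation on $\mathbb{C}^{k}$'' is not how the published argument proceeds and is not true in general---approximately multiplicative maps need not be perturbable to genuine $\ast$-homomorphisms. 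In \cite{HLS,HLSW} one instead compares two approximate morphisms \emph{directly}, using the completely positive approximate factorization through matrix algebras supplied by nuclearity to intertwine them up to approximate unitary equivalence, without ever passing through an honest representation. If you rewrite step~(ii) in those terms your plan matches the literature proof.
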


\begin{theorem}
\label{top 3}(Theorem 3.6 in \cite{HLSW}) Suppose $\mathcal{A}=C^{\ast}%
(x_{1},\cdots,x_{n})$ is an MF-algebra and $\mathcal{A}/\mathcal{J}%
_{MF}(\mathcal{A)}$ has dimensional $d<\infty.$ Then $\delta_{top}%
(x_{1},\cdots,x_{n})=1-\frac{1}{d}$
\end{theorem}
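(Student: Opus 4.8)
The plan is to reduce the claim to computing $\delta_{top}$ of the finite–dimensional quotient $F:=\mathcal{A}/\mathcal{J}_{MF}(\mathcal{A})$ and then to carry out that computation by counting $\omega$–balls in a unitary orbit. Write $F\cong\bigoplus_{i=1}^{m}\mathcal{M}_{d_i}(\mathbb{C})$, so that $d=\dim F=\sum_{i=1}^{m}d_i^{2}$ (the statement being meaningful only for $d\ge 1$), and let $\rho=\bigoplus_i\rho_i:\mathcal{A}\twoheadrightarrow F$ be the quotient map. The structural input, built into the definition of the MF–ideal $\mathcal{J}_{MF}(\mathcal{A})$ together with finite–dimensionality of $F$, is a concentration principle: for a sufficiently rich finite test system $Q_1,\cdots,Q_r$ and sufficiently small $\varepsilon>0$, every tuple in $\Gamma^{(top)}(x_1,\cdots,x_n;k,\varepsilon,Q_1,\cdots,Q_r)$ lies, as $k\to\infty$, within operator norm $o(1)$ of a $U(k)$–conjugate of a ``block model'' $\bigl(\bigoplus_i\rho_i^{\oplus m_i}\bigr)\oplus\theta_k$, in which the amplification of $\rho$ fills all but a corner of sublinear rank and $\theta_k$ is a matricial approximation of $\mathcal{A}$ on that corner (available since $\mathcal{A}$ is MF) whose sole job is to raise the norm of each $Q_j$ from $\|Q_j(\rho(x),\cdots)\|$ up to $\|Q_j(x,\cdots)\|_{\mathcal{A}}$. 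Since the correction corner has rank $o(k)$ it changes every covering number only by a factor $\omega^{o(k^{2})}$; thus $\delta_{top}(x_1,\cdots,x_n)$ coincides with $\delta_{top}$ of the finite–dimensional algebra $F$, and the whole problem becomes geometric.

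\emph{Upper bound.} Fix $\omega>0$ small. By the concentration principle the microstate space is contained in an $\omega$–neighbourhood of a union, over the admissible multiplicity data $(m_1,\cdots,m_m)$ with $\sum_i d_im_i\ge k-o(k)$ — of which there are only $O(k^{m})$ — of $U(k)$–orbits of block models. The orbit attached to such data is a submanifold whose real dimension is $k^{2}$ minus the dimension of its stabilizer, and the stabilizer contains $\prod_i U(m_i)$, so that dimension is at most $k^{2}-\sum_i m_i^{2}+o(k^{2})$. Minimising $\sum_i m_i^{2}$ under $\sum_i d_im_i\ge k-o(k)$ by a Lagrange computation that uses $\sum_i d_i^{2}=d$ gives $\sum_i m_i^{2}\ge k^{2}/d-o(k^{2})$, so every orbit in sight has dimension at most $k^{2}(1-\tfrac1d)+o(k^{2})$. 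Covering $U(k)$ by $\omega^{-k^{2}+o(k^{2})}$ balls, transporting the cover through the (Lipschitz, for fixed $Q_1,\cdots,Q_r$) orbit maps, and summing over the $O(k^{m})$ indices, one covers $\Gamma^{(top)}(x_1,\cdots,x_n;k,\varepsilon,Q_1,\cdots,Q_r)$ with $\omega^{-k^{2}(1-1/d)+o(k^{2})}$ balls of radius comparable to $\omega$. Inserting this into the definition of $\delta_{top}$ and letting $r\to\infty$, $\varepsilon\to0$ and then $\omega\to0^{+}$ yields $\delta_{top}(x_1,\cdots,x_n)\le 1-\tfrac1d$.

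\emph{Lower bound.} Conversely, for each large $k$ choose multiplicities $m_i$ as nearly proportional to $d_i$ as integrality permits, together with a matricial approximation $\theta_k$ of $\mathcal{A}$ of rank $s_k\to\infty$, $s_k=o(k)$, so that $\sum_i d_im_i+s_k=k$, and set $\sigma_k=\bigl(\bigoplus_i\rho_i^{\oplus m_i}\bigr)\oplus\theta_k$. Since the $\rho_i$ are $\ast$–homomorphisms and $\theta_k$ is asymptotically one with $\|\theta_k(a)\|\to\|a\|$, for every $\ast$–polynomial $Q_j$ one has $\|Q_j(\sigma_k(x),\cdots)\|\to\|Q_j(x,\cdots)\|_{\mathcal{A}}$; as conjugation preserves these norms, the entire $U(k)$–orbit of $\sigma_k$ lies in $\Gamma^{(top)}(x_1,\cdots,x_n;k,\varepsilon,Q_1,\cdots,Q_r)$ once $k$ is large, for any fixed $Q_1,\cdots,Q_r$ and $\varepsilon>0$. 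The commutant of $\bigoplus_i\rho_i^{\oplus m_i}$ has dimension $\sum_i m_i^{2}=k^{2}/d+o(k^{2})$ by the proportionality choice, so the orbit of $\sigma_k$ is a smooth submanifold of dimension $k^{2}(1-\tfrac1d)-o(k^{2})$ with uniformly controlled geometry; a Riemannian packing estimate then bounds its $\omega$–covering number below by $\omega^{-k^{2}(1-1/d)+o(k^{2})}$. Hence $\delta_{top}(x_1,\cdots,x_n)\ge 1-\tfrac1d$, and with the upper bound this gives $\delta_{top}(x_1,\cdots,x_n)=1-\tfrac1d$.

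\emph{Main obstacle.} The two Lagrange optimisations that produce the exponent $1-\tfrac1d$ are routine; the weight of the proof lies at the two ends. On the upper side one must establish the concentration principle rigorously and uniformly in $k$ — that is, that topological microstates of $\mathcal{A}$ genuinely cluster, modulo a sublinear–rank corner, around amplified representations of the finite–dimensional quotient $F=\mathcal{A}/\mathcal{J}_{MF}(\mathcal{A})$ — and must carry the $o(k^{2})$ error terms through the nested limits defining $\delta_{top}$; this is precisely the step at which the definition of the MF–ideal and finite–dimensionality of $F$ are indispensable. On the lower side the delicate point is the packing bound: one needs the $\omega$–covering number of a $U(k)$–orbit to be at least $\omega^{-\dim(\mathrm{orbit})+o(k^{2})}$, i.e. that the orbit, suitably normalised, carries enough Riemannian volume and develops no uncontrolled thin directions as $k\to\infty$. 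These volume comparisons for orbits of the unitary group are the technical core of the arguments in \cite{DV} and \cite{HLSW}, and I would invoke them rather than reproduce them.
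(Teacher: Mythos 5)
You should first note that the paper you are working from does not prove this statement at all: it is quoted verbatim as Theorem 3.6 of \cite{HLSW}, so there is no internal proof to compare against, and your sketch has to stand on its own. The overall skeleton is the right one and is essentially the standard argument: reduce to the finite-dimensional quotient $F=\mathcal{A}/\mathcal{J}_{MF}(\mathcal{A})\cong\bigoplus_i\mathcal{M}_{d_i}(\mathbb{C})$, parametrize microstates by multiplicities $(m_i)$ with $\sum_i d_i m_i\approx k$, and the Cauchy--Schwarz (or Lagrange) optimization $\sum_i m_i^2\ge k^2/\sum_i d_i^2=k^2/d$ is exactly what produces the exponent $1-\tfrac1d$. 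The trace with weights $d_i^2/d$, realized by your $\sigma_k$, is indeed the extremizer.

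There are, however, two places where the argument as written does not close. First, in the upper bound you cover $U(k)$ by $\omega^{-k^2+o(k^2)}$ balls and ``transport the cover through the orbit map''; this yields a cover of the orbit of cardinality $\omega^{-k^2+o(k^2)}$, not $\omega^{-(k^2-\sum_i m_i^2)+o(k^2)}$ --- pushing forward a cover never shrinks its cardinality. What you actually need is the Szarek-type covering estimate for the orbit itself, i.e.\ for the homogeneous space $U(k)/\prod_i U(m_i)$, whose covering number is $(C/\omega)^{k^2-\sum_i m_i^2}$; without invoking that lemma explicitly the upper bound is not established. Second, the ``concentration principle'' is the real content of the upper bound and is asserted rather than derived: the definition of $\mathcal{J}_{MF}$ only tells you that normalized traces of microstates accumulate on MF-traces, hence that elements of $\mathcal{J}_{MF}$ become small in the normalized rank sense; converting this into ``norm-close, off a rank-$o(k)$ corner, to a $U(k)$-conjugate of an amplified representation of $F$'' requires cutting by spectral projections and using stability of the (finitely many, exact) relations of the finite-dimensional algebra $F$, all uniformly in the microstate and compatibly with the quantifier order $\limsup_{\omega}\inf_{\varepsilon,r}\limsup_k$ in the definition of $\delta_{top}$. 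On the lower bound, the direct Riemannian packing estimate for the orbit of the highly non-generic tuple $\sigma_k$ is delicate (the orbit map can collapse directions); the route consistent with the rest of the machinery in \cite{HLSW} --- and cleaner --- is to note that the weights-$d_i^2/d$ trace is an MF-trace and that $\delta_0$ of $F$ with respect to it equals $1-\tfrac1d$, then invoke the general minoration of $\delta_{top}$ by $\delta_0$ over MF-traces (in the spirit of Lemma \ref{top 8}), rather than proving a fresh volume lower bound.
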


The previous theorem tell us that the topological free entropy dimension of
$\mathcal{A}$ is independent of the generators of $\mathcal{A}$ if
$\mathcal{A}/\mathcal{J}_{MF}(\mathcal{A)}$ has dimensional $d<\infty.$

\begin{lemma}
\label{top 4}(Theorem 5.3,\cite{HLSW}). Suppose $\mathcal{A}=C^{\ast}%
(x_{1},\cdots,x_{n})$ is a unital MF-algebra and either

(i) $\mathcal{A}$ has no finite-dimensional representations, or

(ii) $\mathcal{A}$ has infinitely many non-unitarily-equivalent
finite-dimensional irreducible representation.

Then $\delta_{top}(x_{1},\cdots,x_{n})\geq1.$
\end{lemma}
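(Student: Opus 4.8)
The plan is to bound the covering numbers $\nu$ in the definition of $\delta_{top}$ (the least number of operator-norm $\omega$-balls needed to cover a set) from below by the covering number of a single unitary orbit lying inside the microstate space. Fix $\varepsilon>0$, $r\in\mathbb{N}$, polynomials $Q_{1},\dots,Q_{r}$, a radius $R>\max_{i}\|x_{i}\|$, and write $\Gamma_{k}=\Gamma^{\text{(top)}}(x_{1},\dots,x_{n};k,\varepsilon,Q_{1},\dots,Q_{r})$. The argument has two steps: (a) produce, for infinitely many matrix sizes $k$, an $n$-tuple $\rho_{k}=(\rho_{k}(x_{1}),\dots,\rho_{k}(x_{n}))\in\Gamma_{k}$ whose relative commutant $C^{\ast}(\rho_{k})'\subseteq\mathcal{M}_{k}(\mathbb{C})$ has $\mathbb{C}$-dimension $m_{k}\le k$; and (b) show that for such a tuple the orbit $O_{k}=\{(u\rho_{k}(x_{1})u^{\ast},\dots,u\rho_{k}(x_{n})u^{\ast}):u\in U(k)\}$ has large covering number. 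Granting (a): conjugating each entry by $u\in U(k)$ preserves every norm $\|Q_{j}(\cdot)\|$ and the bound $\|\cdot\|\le R$, so $O_{k}\subseteq\Gamma_{k}$, while $O_{k}\cong U(k)/(U(k)\cap C^{\ast}(\rho_{k})')$ is a compact manifold of real dimension $k^{2}-m_{k}\ge k^{2}-k$.

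Step (a) uses the two hypotheses differently. Under (ii), choose pairwise inequivalent finite dimensional irreducible representations $\pi_{1},\dots,\pi_{m}$ of $\mathcal{A}$, of dimensions $d_{1},\dots,d_{m}$, and let $\rho=\pi_{1}\oplus\cdots\oplus\pi_{m}$ on dimension $D_{m}=\sum_{j}d_{j}$. Being a genuine $\ast$-representation, $\rho$ satisfies $\|Q_{j}(\rho(x))\|\le\|Q_{j}(x)\|$, so $\rho(x)\in\Gamma_{D_{m}}$ for every $\varepsilon$ and every $Q_{1},\dots,Q_{r}$; by Schur's lemma $C^{\ast}(\rho(x))'=\bigoplus_{j=1}^{m}\mathbb{C}I_{d_{j}}$ has dimension $m\le D_{m}$, and $D_{m}\to\infty$ as $m\to\infty$. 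Under (i), one uses instead that $\mathcal{A}$ is MF: embedding $\mathcal{A}$ into $\prod_{j}\mathcal{M}_{n_{j}}(\mathbb{C})/\bigoplus_{j}\mathcal{M}_{n_{j}}(\mathbb{C})$ and taking direct sums of the resulting matricial approximations shows that $\Gamma_{k}$ is non-empty for $k$ in an infinite set. For such $k$, the set $\Gamma_{k}$ --- cut out by the strict inequalities $\|Q_{j}(A)\|<\|Q_{j}(x)\|+\varepsilon$ and $\|A_{i}\|<R$ --- is open and non-empty in $\mathcal{M}_{k}(\mathbb{C})^{n}\cong\mathbb{C}^{nk^{2}}$, hence cannot be contained in the proper real algebraic subvariety of $n$-tuples that fail to generate $\mathcal{M}_{k}(\mathbb{C})$; so one may choose $\rho_{k}\in\Gamma_{k}$ with $C^{\ast}(\rho_{k})=\mathcal{M}_{k}(\mathbb{C})$, i.e.\ $m_{k}=1$.

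For step (b), write $u=\exp(\mathrm{i}H)$ near the identity, so that $u\rho_{k}(x_{i})u^{\ast}-\rho_{k}(x_{i})=\mathrm{i}[H,\rho_{k}(x_{i})]+O(\|H\|^{2})$; the metric distortion of $u\mapsto(u\rho_{k}(x_{i})u^{\ast})_{i}$ in operator norm is measured by the nonzero singular values $s_{1}\ge s_{2}\ge\cdots$ of the real-linear commutator map $H\mapsto([H,\rho_{k}(x_{1})],\dots,[H,\rho_{k}(x_{n})])$, whose kernel is exactly $C^{\ast}(\rho_{k})'$. Transporting a separated set through this map, restricted to the subspace on which it is bounded below, gives $\log\nu(O_{k},\omega)\ge(k^{2}-o(k^{2}))\log(1/\omega)-O(k^{2})$ for small $\omega$, provided all but $o(k^{2})$ of the $s_{i}$ exceed a fixed positive constant. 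The tuples $\rho_{k}$ from (a) are uniformly bounded by $R$ and, $\mathcal{A}$ being infinite dimensional, stay uniformly away from configurations that generate a proper subalgebra, so this provision holds. Since $O_{k}\subseteq\Gamma_{k}$, dividing by $-k^{2}\log\omega$ and letting $k\to\infty$ along the matrix sizes from (a) gives $\limsup_{k\to\infty}\frac{\log\nu(\Gamma_{k},\omega)}{-k^{2}\log\omega}\ge1-\frac{C}{\log(1/\omega)}$, with $C$ uniform in $\varepsilon$ and $r$; taking $\inf_{\varepsilon,r}$ and then $\limsup_{\omega\to0^{+}}$ yields $\delta_{top}(x_{1},\dots,x_{n})\ge1$.

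The genuine difficulty is the uniformity in step (b): one must prevent the commutator maps $H\mapsto([H,\rho_{k}(x_{i})])_{i}$ from degenerating --- show that only $o(k^{2})$ of their singular values can be small and that the small ones remain quantitatively controlled --- so that the orbits $O_{k}$ do not collapse onto a lower dimensional set as $k\to\infty$. Under (ii) this forces one to choose the irreducibles $\pi_{j}$ (or pass to a subsequence of the $m$) so that their mutual positions stay bounded away from coincidence; under (i) it requires a quantitative version of the genericity argument, producing $\rho_{k}$ with uniformly non-degenerate commutator map. These quantitative estimates are precisely the content of \cite[Theorem~5.3]{HLSW}.
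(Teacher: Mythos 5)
First, note that the paper does not prove this lemma at all: it is quoted verbatim as Theorem~5.3 of \cite{HLSW}, so your sketch can only be judged against that reference. The route actually taken there is not the direct orbit-covering argument you outline but a reduction to the von Neumann algebraic invariant via MF-traces, i.e.\ exactly the machinery the paper records as Lemma~\ref{top 8} and Remark~\ref{top 9}: under (i) every MF-trace $\tau$ has $\pi_\tau(\mathcal{A})''$ of type II$_1$ (a finite type~I central summand would yield a finite-dimensional representation), so some $b=b^{\ast}$ has no eigenvalues and $\delta_{0}(b,\widehat{\tau})=1$; under (ii) one forms MF-traces $\tau=\sum_{j}\lambda_{j}\,\mathrm{tr}_{d_{j}}\circ\pi_{j}$ with $\sum_{j}\lambda_{j}^{2}$ arbitrarily small and applies Voiculescu's eigenvalue formula to get $\delta_{0}(b,\widehat{\tau})\geq 1-\sum_{j}\lambda_{j}^{2}$. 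Your approach is genuinely different, and in principle viable (it is close in spirit to the free-orbit-dimension technique underlying Lemma~\ref{top 8}), but as written it has two concrete gaps.

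The first is in step (a) under hypothesis (ii): the microstate space is cut out by the two-sided condition $\bigl|\,\|Q_{j}(A)\|-\|Q_{j}(x)\|\,\bigr|<\varepsilon$, and a finite direct sum $\pi_{1}\oplus\cdots\oplus\pi_{m}$ of finite-dimensional irreducibles need not satisfy the lower inequality $\|Q_{j}(\rho(x))\|>\|Q_{j}(x)\|-\varepsilon$ unless $\mathcal{A}$ is RFD with that family norm-capturing, which hypothesis (ii) does not give (take $\mathcal{A}=C(X)\oplus\mathcal{B}$ with $\mathcal{B}$ having no finite-dimensional representations). Repairing this by appending a genuine MF approximant of large dimension destroys the commutant count on which your dimension estimate rests. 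The second and more serious gap is step (b): the passage from ``$O_{k}$ is a manifold of dimension $k^{2}-o(k^{2})$'' to ``$\log\nu(O_{k},\omega)\geq(k^{2}-o(k^{2}))\log(1/\omega)-O(k^{2})$'' requires a quantitative, $k$-uniform lower bound on the metric non-degeneracy of the orbit map (not just near the identity, where your $\exp(\mathrm{i}H)$ expansion lives, but globally, via a packing argument on $U(k)$ of Szarek type), and your justification --- that the $\rho_{k}$ ``stay uniformly away'' from degenerate configurations so that all but $o(k^{2})$ singular values of the commutator map exceed a fixed constant --- is asserted rather than proved; indeed already for $\mathcal{A}=C(X)$ the commutants of admissible microstates have dimension a positive fraction of $k^{2}$, so the estimate must be organized as ``for every $\eta>0$, orbits of dimension $\geq(1-\eta)k^{2}$,'' which your construction does not deliver. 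Since you end by declaring that these estimates ``are precisely the content of [HLSW, Theorem~5.3],'' the argument is circular at exactly the point where the work lies.
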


By Lemma \ref{top 4}, we can quickly get the following corollary.

\begin{corollary}
\label{top 5}Let $\mathcal{A}=C^{\ast}(x_{1},\cdots,x_{n})$ be a simple
infinite-dimensional MF algebra, then $\delta_{top}(x_{1},\cdots,x_{n})\geq1.$
\end{corollary}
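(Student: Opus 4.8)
The plan is to deduce Corollary \ref{top 5} directly from Lemma \ref{top 4} by checking that a simple infinite-dimensional MF algebra $\mathcal{A}=C^{\ast}(x_{1},\cdots,x_{n})$ satisfies hypothesis (i) of that lemma, namely that it has no finite-dimensional representations. First I would recall that a finite-dimensional representation $\sigma:\mathcal{A}\to\mathcal{M}_{m}(\mathbb{C})$ has a closed two-sided kernel $\ker\sigma$. Since $\mathcal{A}$ is simple, $\ker\sigma$ is either $0$ or all of $\mathcal{A}$; the latter is impossible for a (nonzero) representation, so $\ker\sigma=0$ and $\sigma$ is injective. But then $\mathcal{A}$ embeds into $\mathcal{M}_{m}(\mathbb{C})$, forcing $\dim\mathcal{A}\leq m^{2}<\infty$, contradicting the assumption that $\mathcal{A}$ is infinite-dimensional. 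Hence $\mathcal{A}$ admits no finite-dimensional representation at all.

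With that verified, the corollary is immediate: $\mathcal{A}$ is a unital MF algebra (being unital and infinite-dimensional, one should note $n\geq 1$ so the ambient tuple is nonempty and the hypotheses of Lemma \ref{top 4} apply verbatim), and condition (i) of Lemma \ref{top 4} holds, so $\delta_{top}(x_{1},\cdots,x_{n})\geq 1$. I would write this as a two- or three-sentence proof, citing only simplicity and the trivial dimension count, then invoking Lemma \ref{top 4}(i).

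I do not anticipate any real obstacle here; the only point requiring a line of care is the reduction "simple $\Rightarrow$ every nonzero representation is faithful $\Rightarrow$ a finite-dimensional one would make $\mathcal{A}$ finite-dimensional," which is the standard argument and uses nothing beyond the definition of simplicity. One could phrase it even more slickly by observing that for a simple C*-algebra the only possible finite-dimensional quotient is $\{0\}$, so $\mathcal{A}$ has no finite-dimensional representation unless $\mathcal{A}$ itself is finite-dimensional. Either way the content is a one-step consequence of Lemma \ref{top 4}, and the proof environment closes after invoking it.

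\begin{proof}
Since $\mathcal{A}$ is simple, the kernel of any nonzero $\ast$-representation is $\{0\}$, so every nonzero representation of $\mathcal{A}$ is faithful. If $\mathcal{A}$ had a finite-dimensional representation $\sigma:\mathcal{A}\rightarrow\mathcal{M}_{m}(\mathbb{C})$, then $\sigma$ would be an injective $\ast$-homomorphism and hence $\dim\mathcal{A}\leq m^{2}<\infty$, contradicting the assumption that $\mathcal{A}$ is infinite-dimensional. Thus $\mathcal{A}$ has no finite-dimensional representation, so condition $(i)$ of Lemma \ref{top 4} is satisfied. Since $\mathcal{A}$ is a unital MF-algebra, Lemma \ref{top 4} yields $\delta_{top}(x_{1},\cdots,x_{n})\geq 1$.
\end{proof}
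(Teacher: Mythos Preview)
Your proof is correct and matches the paper's approach exactly: the paper simply says the corollary follows immediately from Lemma \ref{top 4}, and your argument spells out the obvious reason---a simple infinite-dimensional C*-algebra has no nonzero finite-dimensional representation, so hypothesis (i) applies.
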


\begin{remark}
\label{top 9} In the von Neumann version of free entropy dimension, Voiculescu
in \cite{V3} prove that if $x=x^{\ast}$ is an element of a von Neumann algebra
with faithful trace $\tau$, then%
\[
\delta_{0}\left(  x\right)  =1-\sum_{t\text{ is an eigenvalue of }x}%
\tau\left(  P_{t}\right)  ^{2}%
\]
where $P_{t}$ is the orthogonal projection onto $\ker\left(  x-t\right)  .$ If
$x$ has no eigenvalues, then $\delta_{0}\left(  x\right)  =1.$
\end{remark}

Suppose $\tau$ is a tracial state of $\mathcal{A}$ and $\pi_{\tau}$ is the GNS
construction induced by $\tau$ with cyclic vector $e.$ Then $\widehat{\tau
}:\pi\left(  \mathcal{A}\right)  ^{\prime\prime}\rightarrow\mathbb{C}$ is the
faithful tracial state defined by $\widehat{\tau}\left(  T\right)
=\left\langle Te,e\right\rangle .$ So we can find next lemma in \cite{HLSW}.

\begin{lemma}
\label{top 8}Suppose $\mathcal{A}=C^{\ast}\left(  x_{1},\cdots,x_{n}\right)  $
is a unital MF-aglebra and $\tau\in\mathcal{T}_{MF}\left(  \mathcal{A}\right)
.$ Suppose $b=b^{\ast}\in\pi_{\tau}\left(  \mathcal{A}\right)  ^{\prime\prime
}$. Then
\[
\delta_{top}\left(  x_{1},\cdots,x_{n}\right)  >\delta_{0}\left(
b,\widehat{\tau}\right)  .
\]

\end{lemma}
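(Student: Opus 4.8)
The plan is to prove the inequality by a covering‑number comparison between the topological microstate spaces of $(x_1,\ldots,x_n)$ and the (Voiculescu) matricial microstate spaces of the single self‑adjoint element $b$, exploiting that all microstates of one bounded self‑adjoint element coincide up to approximate unitary conjugacy. First I would reduce to the case $b=\pi_\tau(p(x_1,\ldots,x_n))$ for a self‑adjoint $*$‑polynomial $p$: by Kaplansky density there are self‑adjoint $a_m\in\mathcal A$ with $\sup_m\|a_m\|<\infty$ and $\|\pi_\tau(a_m)-b\|_2\to0$, and each $a_m$ is a norm–limit of self‑adjoint $*$‑polynomials in the $x_i$; so it suffices to prove the estimate with $b$ replaced by $b_m=\pi_\tau(p_m(x))$ and then let $m\to\infty$, using that $c\mapsto\delta_0(c,\widehat\tau)$ is lower semicontinuous along $\|\cdot\|_2$‑convergent sequences of uniformly norm‑bounded self‑adjoints. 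This last fact follows from Voiculescu's formula recalled in Remark \ref{top 9}: writing $\mu_c$ for the spectral distribution of $c$, one has $\sum_t\widehat\tau(P_t^c)^2=\sum_t\mu_c(\{t\})^2=(\mu_c\otimes\mu_c)(\Delta)$ with $\Delta\subseteq\mathbb R^2$ the closed diagonal, and $\mu_{c_m}\to\mu_c$ weak$^*$ forces $\limsup_m(\mu_{c_m}\otimes\mu_{c_m})(\Delta)\le(\mu_c\otimes\mu_c)(\Delta)$ by the portmanteau theorem, i.e. $\delta_0(c,\widehat\tau)\le\liminf_m\delta_0(c_m,\widehat\tau)$.

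Next, since $\mathcal A=C^\ast(x_1,\ldots,x_n)$ is MF and $\tau\in\mathcal T_{MF}(\mathcal A)$, for each finite family $Q_1,\ldots,Q_r$ of $*$‑polynomials, each $\varepsilon>0$, and all large $k$ there is an $n$‑tuple $A^{(k)}=(A_1^{(k)},\ldots,A_n^{(k)})$ of $k\times k$ matrices lying in $\Gamma^{(\text{top})}(x_1,\ldots,x_n;k,\varepsilon,Q_1,\ldots,Q_r)$ which moreover satisfies $|\mathrm{tr}_k(w(A^{(k)}))-\tau(w(x))|<\varepsilon$ for all words $w$ of length $\le r$; this is precisely the statement that $\tau$ is realised along the matricial models witnessing the MF property of $\mathcal A$ (block‑summing models as needed to obtain one in every large dimension $k$). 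Hence $p(A^{(k)})\in\mathcal M_k^{sa}$ has $\|p(A^{(k)})\|\le\|p(x)\|+o(1)$ and $\mathrm{tr}_k\big(p(A^{(k)})^{\,l}\big)\to\widehat\tau(b^{\,l})$ for every $l$; thus, once $\varepsilon$ is small and $r,k$ are large, $p(A^{(k)})$ belongs to the microstate set $\Gamma_R(b;k,\ell_0,\eta)\subseteq\mathcal M_k^{sa}$ for $R$ slightly above $\|b\|$.

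The key input is that microstates of a single bounded self‑adjoint element are essentially unique up to conjugacy: if $Y,Z\in\mathcal M_k^{sa}$ satisfy $\|Y\|,\|Z\|\le R$ and $|\mathrm{tr}_k(Y^l)-\mathrm{tr}_k(Z^l)|<\eta$ for $l\le\ell_0$, then their empirical spectral distributions are close ($W_1$‑distance $\to0$ as $\ell_0\to\infty,\ \eta\to0$, using the uniform bound), whence $\min_{U\in\mathcal U(k)}\|UYU^\ast-Z\|_2$ is bounded by the $W_2$‑distance of these empirical distributions, uniformly in $k$ (matching sorted eigenvalues). Applying this with $Y=p(A^{(k)})$: for any $\epsilon>0$, choosing $\ell_0$ large and $\eta,\varepsilon$ small, every $Z\in\Gamma_R(b;k,\ell_0,\eta)$ has $\|U\,p(A^{(k)})\,U^\ast-Z\|_2<\epsilon$ for some $U\in\mathcal U(k)$; since $U\,p(A^{(k)})\,U^\ast=p(UA^{(k)}U^\ast)$ and $UA^{(k)}U^\ast$ again lies in $\Gamma^{(\text{top})}(x_1,\ldots,x_n;k,\varepsilon,Q_1,\ldots,Q_r)$ (conjugation preserves operator norms), the Lipschitz polynomial map $X\mapsto p(X)$, with Lipschitz constant $L_p$ on the ambient norm‑ball, sends the topological microstate space onto an $\epsilon$‑dense subset of $\Gamma_R(b;k,\ell_0,\eta)$. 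Covering the topological microstate space by $\omega$‑balls and pushing forward therefore gives
\[
\nu_\infty\big(\Gamma_R(b;k,\ell_0,\eta),\,\epsilon+L_p\omega\big)\ \le\ \nu_\infty\big(\Gamma^{(\text{top})}(x_1,\ldots,x_n;k,\varepsilon,Q_1,\ldots,Q_r),\,\omega\big).
\]
Taking logarithms, dividing by $-k^2\log\omega$ (the normalizations agree, $\mathcal M_k^{sa}$ having real dimension $k^2$), then passing to $\limsup_k$, to $\inf$ over the test data, and to $\limsup_{\omega\to0^+}$ — absorbing $\epsilon,\eta$ and invoking that for a single self‑adjoint generator this covering expression computes $\delta_0(b,\widehat\tau)$ (cf. \cite{DV}, \cite{HLSW}) — yields $\delta_0(b,\widehat\tau)\le\delta_{top}(x_1,\ldots,x_n)$, and the strict inequality then comes from the surplus volume available in the topological microstate space, which in fact contains a full open neighbourhood in $(\mathcal M_k)^n$ of the unitary orbit of $A^{(k)}$ — directions transverse to the orbit which the polynomial collapse onto $\Gamma_R(b)$ does not see. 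Together with the first paragraph this proves the lemma.

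The step I expect to be the main obstacle is the bookkeeping that makes the comparison valid along a single index $k$ rather than along a subsequence: one must ensure that matricial models realising $\tau\in\mathcal T_{MF}(\mathcal A)$ exist in $\mathcal M_k$ for all large $k$ (so that the $\limsup_k$ defining $\delta_{top}$ legitimately dominates the $\limsup_k$ occurring in $\delta_0(b,\widehat\tau)$), together with checking that the normalizations of $\nu_\infty$ in the definitions of $\delta_{top}$ and of $\delta_0$ really coincide for one self‑adjoint variable; pinning down the transverse‑volume estimate in the last sentence is the remaining delicate point needed to upgrade $\ge$ to the stated strict inequality.
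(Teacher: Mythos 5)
A preliminary remark: the paper does not actually prove this lemma --- it is imported from \cite{HLSW} with the words ``we can find next lemma in \cite{HLSW}'' --- so there is no in-paper argument to compare yours against line by line. That said, your outline of how one would prove $\delta_{top}(x_1,\ldots,x_n)\ge\delta_0(b,\widehat\tau)$ is the natural and essentially the intended one: realize $\tau$ along matricial models witnessing the MF property, observe that $p(A^{(k)})$ is then a microstate for $b=\pi_\tau(p(x))$, use uniqueness up to unitary conjugation of microstates of a single bounded self-adjoint to make the unitary orbit of $p(A^{(k)})$ dense in $\Gamma_R(b;k,\ell_0,\eta)$, and push coverings forward through the Lipschitz map $X\mapsto p(X)$. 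The Kaplansky-density reduction, combined with lower semicontinuity of $\delta_0$ along norm-bounded $\Vert\cdot\Vert_2$-convergent sequences of self-adjoints (your portmanteau argument gives exactly the inequality $\delta_0(c)\le\liminf_m\delta_0(c_m)$, which is the direction needed), correctly passes from polynomials to general $b\in\pi_\tau(\mathcal A)''$.

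The genuine error is the final step, where you try to upgrade $\ge$ to the strict inequality $>$ appearing in the statement. That strict inequality is false as written and must be read as $\ge$: take $n=1$, $\mathcal A=C([0,1])=C^*(x)$ with $x$ the coordinate function and $\tau$ integration against Lebesgue measure (an MF-trace), and $b=\pi_\tau(x)$; then $b$ has no eigenvalues, so $\delta_0(b,\widehat\tau)=1$ by Remark \ref{top 9}, while $\delta_{top}(x)\le 1$ by Theorem \ref{don} since $\mathcal A$ is nuclear. The paper itself only ever invokes the lemma in the form $\delta_{top}\ge\delta_0$ and, in the corollaries, combines it with Theorem \ref{don} to conclude $\delta_{top}=1$, which would be contradictory if the inequality were strict. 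Your proposed source of strictness --- ``surplus volume in directions transverse to the unitary orbit'' --- cannot work: a surplus only contributes positively to $\limsup_k\log\nu_\infty/(-k^2\log\omega)$ if the transverse directions have real dimension at least $ck^2$ with $c>0$ independent of $k$, and nothing in the setup guarantees this (in the example above, with $p=\mathrm{id}$, there are no transverse directions at all). Two further, repairable gaps you already flagged: the MF models a priori exist only along a subsequence of dimensions, so comparing the two $\limsup_k$'s requires the usual block-summing and padding argument; and the covering of $\Gamma_R(b;\cdot)$ you produce is with respect to $\Vert\cdot\Vert_2$ while the one you start from is with respect to the operator norm --- the inequality goes the right way because $\Vert\cdot\Vert_2\le\Vert\cdot\Vert$, but the two sides of your displayed inequality are covering numbers for different metrics and should not both be denoted $\nu_\infty$.
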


\begin{definition}
(\cite{HLSW}) \label{33}Suppose $\mathcal{A=}C^{\ast}\left(  x_{1}%
,\cdots,x_{n}\right)  $ is an MF C*-algebra. A tracial state $\tau$ on
$\mathcal{A}$ is an MF-trace if there is sequence $\left\{  m_{k}\right\}  $
of positive integers and sequences $\left\{  A_{1k}\right\}  ,\cdots,\left\{
A_{nk}\right\}  $ with $A_{1k},\cdots,A_{nk}\in\mathcal{M}_{m_{k}}\left(
\mathbb{C}\right)  $ such that, for every $\ast$-polynomial $Q,$

\begin{enumerate}
\item $\lim_{k\rightarrow\mathcal{1}}\left\Vert Q\left(  A_{1k},\cdots
,A_{nk}\right)  \right\Vert =\left\Vert Q\left(  x_{1},\cdots,x_{n}\right)
\right\Vert ,$ and

\item $\lim_{k\rightarrow\mathcal{1}}\tau_{m_{k}}\left(  Q\left(
A_{1k},\cdots,A_{nk}\right)  \right)  =\tau\left(  Q\left(  x_{1},\cdots
,x_{n}\right)  \right)  .$
\end{enumerate}
\end{definition}

We let $\mathcal{TS}\left(  \mathcal{A}\right)  $ denote the set of all
tracial states on $\mathcal{A}$ and $\mathcal{T}_{MF}\left(  \mathcal{A}%
\right)  $ denote the set of all MF-traces on $\mathcal{A}.$ The MF-ideal of
$\mathcal{A}$ is defined as
\[
\mathcal{J}_{MF}\left(  \mathcal{A}\right)  =\left\{  a\in\mathcal{A}%
:\forall\tau\in\mathcal{T}_{MF}\left(  \mathcal{A}\right)  ,\tau\left(
a^{\ast}a\right)  =0\right\}  .
\]

It has been shown that $\mathcal{J}_{MF}\left(  \mathcal{A}\right)  $ is a
nonempty weak*-compact convex set in \cite{HLSW}. Furthermore, we let
$\mathcal{S}$ denote the class of MF C*-algebras $\mathcal{A}$ for which every
trace is an MF-trace, i.e., $\mathcal{TS}\left(  \mathcal{A}\right)
=\mathcal{T}_{MF}\left(  \mathcal{A}\right)  $ and $\mathcal{W}$ denote the
class of all MF algebras $\mathcal{A}$ such that $\mathcal{J}_{MF}\left(
\mathcal{A}\right)  =\left\{  0\right\}  .$ For more details about
$\mathcal{S}$ and $\mathcal{W}$, we refer the readers to \cite{HLSW}.

\subsection{$\delta_{top}$ in just-infinite C*-algebras}

The following two results are easy consequences of the properties of
just-infinite C*-algebras and $\delta_{top}.$

\begin{proposition}
\label{top 6}Let $\mathcal{A}=C^{\ast}(x_{1},\cdots,x_{n})$ be a unital
just-infinite MF C*-algebra$.$ If $\mathcal{A}\notin\mathcal{W}$, then
$\delta_{top}(x_{1},\cdots,x_{n})=1-\frac{1}{\dim\left(  \mathcal{A}%
\text{/}\mathcal{J}_{MF}\left(  \mathcal{A}\right)  \right)  }.$
\end{proposition}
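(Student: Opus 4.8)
The plan is to combine the structure theorem for just-infinite C*-algebras (Theorem \ref{just-infinite 1.2}) with Theorem \ref{top 3}, which computes $\delta_{top}$ as soon as one knows the dimension of $\mathcal{A}/\mathcal{J}_{MF}(\mathcal{A})$ is finite. So the entire task reduces to showing that the hypotheses ``$\mathcal{A}$ is unital, just-infinite, MF, and $\mathcal{A}\notin\mathcal{W}$'' force $d:=\dim(\mathcal{A}/\mathcal{J}_{MF}(\mathcal{A}))$ to be finite (and nonzero, so that $1-\frac1d$ makes sense). Indeed, $\mathcal{A}\notin\mathcal{W}$ means $\mathcal{J}_{MF}(\mathcal{A})\neq\{0\}$ by definition of the class $\mathcal{W}$, and since $\mathcal{J}_{MF}(\mathcal{A})$ is a closed two-sided ideal of $\mathcal{A}$ (this is part of what is recalled from \cite{HLSW}), it is a \emph{nonzero} ideal. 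Now just-infiniteness says every proper quotient of $\mathcal{A}$ is finite-dimensional; I would first check that $\mathcal{J}_{MF}(\mathcal{A})\neq\mathcal{A}$, i.e. the quotient is proper. This holds because $\mathcal{A}$ is MF, hence has at least one MF-trace (the set $\mathcal{T}_{MF}(\mathcal{A})$ is nonempty, as recorded after Definition \ref{33}), and for that trace $\tau$ one has $\tau(1^\ast1)=\tau(1)=1\neq 0$, so $1\notin\mathcal{J}_{MF}(\mathcal{A})$. Therefore $\mathcal{J}_{MF}(\mathcal{A})$ is a nonzero proper ideal, and just-infiniteness gives $d=\dim(\mathcal{A}/\mathcal{J}_{MF}(\mathcal{A}))<\infty$.

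With $d<\infty$ in hand, Theorem \ref{top 3} applies verbatim to $\mathcal{A}=C^\ast(x_1,\dots,x_n)$ and yields
\[
\delta_{top}(x_1,\dots,x_n)=1-\frac1d=1-\frac{1}{\dim(\mathcal{A}/\mathcal{J}_{MF}(\mathcal{A}))},
\]
which is exactly the claimed formula. One should also note $d\geq 1$: since $\mathcal{J}_{MF}(\mathcal{A})$ is proper, the quotient $\mathcal{A}/\mathcal{J}_{MF}(\mathcal{A})$ is a nonzero finite-dimensional C*-algebra, so $d\geq 1$ and the right-hand side is a well-defined number in $[0,1)$.

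The only point requiring care — and the step I expect to be the main obstacle — is the verification that $\mathcal{J}_{MF}(\mathcal{A})$ is genuinely a \emph{two-sided ideal} (so that the quotient is a C*-algebra and just-infiniteness can be invoked) and that it is proper; both of these are facts attributed to \cite{HLSW} in the paragraph following Definition \ref{33}, so I would simply cite them. Everything else is a direct chain of implications: $\mathcal{A}\notin\mathcal{W}\Rightarrow\mathcal{J}_{MF}(\mathcal{A})\neq\{0\}\Rightarrow$ (by primeness/just-infiniteness, via Theorem \ref{just-infinite 1.2} and the unitality giving properness) $\dim(\mathcal{A}/\mathcal{J}_{MF}(\mathcal{A}))<\infty\Rightarrow$ (by Theorem \ref{top 3}) the formula. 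No new estimates on microstate spaces are needed; the proposition is essentially a bookkeeping consequence of the two cited theorems.
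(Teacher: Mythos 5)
Your proposal is correct and follows essentially the same route as the paper: $\mathcal{A}\notin\mathcal{W}$ gives $\mathcal{J}_{MF}(\mathcal{A})\neq\{0\}$, just-infiniteness makes the quotient $\mathcal{A}/\mathcal{J}_{MF}(\mathcal{A})$ finite-dimensional, and Theorem \ref{top 3} then yields the formula. The only difference is that you spell out why $\mathcal{J}_{MF}(\mathcal{A})$ is a proper closed two-sided ideal (via the nonemptiness of $\mathcal{T}_{MF}(\mathcal{A})$ and $\tau(1)=1$), a point the paper leaves implicit but which your check correctly settles.
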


\begin{proof}
If $\mathcal{A\notin W}$, then $\mathcal{J}_{MF}\left(  \mathcal{A}\right)
\neq0.$ Since $\mathcal{A}$ is just-finite, we have $\mathcal{A}%
/\mathcal{J}_{MF}\left(  \mathcal{A}\right)  $ is finite-dimensional. Then
$\delta_{top}(x_{1},\cdots,x_{n})=1-\frac{1}{\dim\left(  \mathcal{A}%
\text{/}\mathcal{J}_{MF}\left(  \mathcal{A}\right)  \right)  }$by Theorem
\ref{top 3}.
\end{proof}

\begin{proposition}
\label{top 7}Let $\mathcal{A}=C^{\ast}(x_{1},\cdots,x_{n})$ be a unital
just-infinite MF C*-algebra$.$ If $\mathcal{A}$ is of type $(\alpha)$ or type
$(\gamma)$ in Theorem \ref{just-infinite 1.2}, then $\delta_{top}(x_{1}%
,\cdots,x_{n})\geq1.$
\end{proposition}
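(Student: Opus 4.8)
The plan is to split into the two cases according to the type of the just-infinite algebra $\mathcal{A}$ and in each case reduce to a result already available in the excerpt, namely Lemma \ref{top 4} (which says $\delta_{top}\geq 1$ when $\mathcal{A}$ has no finite-dimensional representations, or has infinitely many pairwise non-unitarily-equivalent finite-dimensional irreducible representations). So the whole point is to show that being just-infinite of type $(\alpha)$ or type $(\gamma)$ forces $\mathcal{A}$ into one of these two alternatives.

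First I would handle type $(\alpha)$. By part $(\alpha)$ of Theorem \ref{just-infinite 1.2}, $\mathcal{A}$ is simple, and since $\mathcal{A}$ is just-infinite it is in particular infinite-dimensional. A simple infinite-dimensional C*-algebra cannot have a finite-dimensional representation: such a representation would have nonzero kernel (otherwise $\mathcal{A}$ embeds in a matrix algebra and is finite-dimensional), contradicting simplicity. Hence alternative (i) of Lemma \ref{top 4} applies and $\delta_{top}(x_1,\ldots,x_n)\geq 1$. (This is essentially Corollary \ref{top 5}, so I would just cite that.)

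Next, type $(\gamma)$. By Theorem \ref{just-infinite 1.2}$(\gamma)$, $prim(\mathcal{A})=Y_\infty$ is an infinite set and every non-faithful irreducible representation is finite-dimensional; moreover by condition $(iii)$ there every proper quotient $\mathcal{A}/\mathcal{I}$ with $\mathcal{I}\in prim(\mathcal{A})$ nonzero is finite-dimensional, and by condition $(iv)$ the direct sum of any infinite family of pairwise inequivalent irreducible representations is faithful. Since $prim(\mathcal{A})$ is infinite, I can choose infinitely many distinct nonzero primitive ideals $\mathcal{I}_1,\mathcal{I}_2,\ldots$; the associated irreducible representations $\pi_j$ are pairwise inequivalent (distinct kernels) and each is finite-dimensional because $\mathcal{A}/\mathcal{I}_j$ is finite-dimensional and $\pi_j$ factors through it. Thus $\mathcal{A}$ has infinitely many pairwise non-unitarily-equivalent finite-dimensional irreducible representations, which is exactly alternative (ii) of Lemma \ref{top 4}, giving $\delta_{top}(x_1,\ldots,x_n)\geq 1$.

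The main (and only real) obstacle is making sure the set-up of Lemma \ref{top 4} is legitimately met: in type $(\gamma)$ one must verify that the infinitely many chosen irreducible representations are genuinely finite-dimensional and genuinely pairwise inequivalent, which follows cleanly from the equivalent characterizations in Theorem \ref{just-infinite 1.2}$(\gamma)$; in type $(\alpha)$ one must rule out finite-dimensional representations of a simple infinite-dimensional algebra, which is immediate. Everything else is a direct invocation of the cited results, so the proof is short.
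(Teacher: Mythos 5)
Your proposal is correct and follows essentially the same route as the paper: type $(\alpha)$ is handled via Corollary \ref{top 5} (simple infinite-dimensional MF algebras), and type $(\gamma)$ via Lemma \ref{top 4}(ii) after noting that the infinitely many primitive ideals yield infinitely many pairwise inequivalent finite-dimensional irreducible representations. The only difference is that you spell out the justifications that the paper leaves implicit.
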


\begin{proof}
Suppose $\mathcal{A}$ is of type $(\alpha),$ then $\mathcal{A}$ is simple and
infinite-dimensional. By Corollary \ref{top 5}, $\delta_{top}(x_{1}%
,\cdots,x_{n})\geq1.$ Suppose $\mathcal{A}$ is of type $(\gamma),$ then
$\mathcal{A}$ has infinite many non-unitarily-equivalent finite dimensional
irreducible representations, so $\delta_{top}(x_{1},\cdots,x_{n})\geq1$ by
Lemma \ref{top 4}.
\end{proof}

\begin{remark}
\label{top 1}Note if $\mathcal{A}$ is of type $(\alpha)$ or type $(\gamma),$
then $\mathcal{A}\in\mathcal{W}.$
\end{remark}

\begin{theorem}
Let $\mathcal{A}=C^{\ast}(x_{1},\cdots,x_{n})$ be a unital MF C*-algebra$.$ If
$\mathcal{A}$ is just-infinite and $\mathcal{A}\in\mathcal{W\cap S}$ , then
$\delta_{top}(x_{1},\cdots,x_{n})\geq1.$
\end{theorem}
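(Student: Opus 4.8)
The plan is to reduce to the type $(\beta)$ situation in Theorem \ref{just-infinite 1.2} and then, using Lemma \ref{top 8} and Remark \ref{top 9}, exhibit a tracial state $\tau$ on $\mathcal{A}$ whose GNS von Neumann algebra contains a self-adjoint element with no eigenvalues. Since $\mathcal{A}=C^{\ast}(x_{1},\ldots,x_{n})$ is separable, Theorem \ref{just-infinite 1.2} shows it is of one of the types $(\alpha)$, $(\beta)$, $(\gamma)$. For types $(\alpha)$ and $(\gamma)$ the inequality $\delta_{top}(x_{1},\ldots,x_{n})\geq 1$ is exactly Proposition \ref{top 7}, so it remains to handle type $(\beta)$: here $\mathcal{A}$ contains a nonzero simple essential infinite-dimensional ideal $\mathcal{I}_{0}$ with $\mathcal{A}/\mathcal{I}_{0}$ finite-dimensional.

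First I would produce a suitable trace. Since $\mathcal{A}\in\mathcal{S}$ we have $\mathcal{TS}(\mathcal{A})=\mathcal{T}_{MF}(\mathcal{A})$, and since $\mathcal{A}\in\mathcal{W}$ we have $\mathcal{J}_{MF}(\mathcal{A})=\{0\}$; therefore no nonzero element of $\mathcal{A}$ is annihilated by every tracial state. Applied to a nonzero element of $\mathcal{I}_{0}$, this yields a $\tau\in\mathcal{TS}(\mathcal{A})$ with $\tau|_{\mathcal{I}_{0}}\neq 0$, and after normalization $\tau_{0}:=\tau|_{\mathcal{I}_{0}}$ is a tracial state on $\mathcal{I}_{0}$, which is faithful because $\mathcal{I}_{0}$ is simple. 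Let $(\pi_{\tau_{0}},H,\xi)$ be the GNS triple of $(\mathcal{I}_{0},\tau_{0})$ and put $M:=\pi_{\tau_{0}}(\mathcal{I}_{0})^{\prime\prime}$, a finite von Neumann algebra with faithful normal tracial state $\widehat{\tau_{0}}$ given by the vector $\xi$. Extending the nondegenerate representation $\pi_{\tau_{0}}$ of the ideal $\mathcal{I}_{0}$ to a (unital) representation $\bar\pi$ of $\mathcal{A}$, an approximate-unit argument shows $\bar\pi(\mathcal{A})\subseteq M$, while $\xi$ remains cyclic for $\bar\pi(\mathcal{A})$; hence $\tau(a):=\langle\bar\pi(a)\xi,\xi\rangle$ is a tracial state on $\mathcal{A}$ restricting to $\tau_{0}$ on $\mathcal{I}_{0}$, its GNS triple is $(\bar\pi,H,\xi)$, and $\pi_{\tau}(\mathcal{A})^{\prime\prime}=M$. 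Because $\mathcal{A}\in\mathcal{S}$, we have $\tau\in\mathcal{T}_{MF}(\mathcal{A})$.

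Next I would show $M$ is diffuse. If $p\in M$ were a minimal projection, its central support $z=c(p)$ would make $zM$ a type I factor carrying a faithful normal finite trace, so $zM\cong\mathcal{M}_{d}(\mathbb{C})$ for some finite $d$; then $x\mapsto z\,\pi_{\tau_{0}}(x)$ is a $\ast$-homomorphism of $\mathcal{I}_{0}$ onto a weakly dense $\ast$-subalgebra of $\mathcal{M}_{d}(\mathbb{C})$, which by finite-dimensionality is all of $\mathcal{M}_{d}(\mathbb{C})$, and simplicity of $\mathcal{I}_{0}$ forces this surjection to be either injective (impossible since $\dim\mathcal{I}_{0}=\infty$) or zero (impossible since $z\neq 0$). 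So $M$ has no minimal projection, i.e. $M$ is diffuse, and thus contains a self-adjoint element $b$ with no eigenvalues. By Remark \ref{top 9}, $\delta_{0}(b,\widehat{\tau})=1$, and Lemma \ref{top 8} then gives $\delta_{top}(x_{1},\ldots,x_{n})>\delta_{0}(b,\widehat{\tau})=1$; in particular $\delta_{top}(x_{1},\ldots,x_{n})\geq 1$.

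The hard part will be the middle steps concerning $M$: extracting a tracial state on $\mathcal{I}_{0}$ from $\mathcal{A}\in\mathcal{W}\cap\mathcal{S}$, correctly extending it to a tracial state on $\mathcal{A}$ with $\pi_{\tau}(\mathcal{A})^{\prime\prime}=M$, and above all verifying that $M$ is diffuse; this last point is precisely where the simplicity and infinite-dimensionality of $\mathcal{I}_{0}$ enter. The trichotomy of Theorem \ref{just-infinite 1.2}, Proposition \ref{top 7} for the remaining cases, and the inequality of Lemma \ref{top 8} are used as black boxes.
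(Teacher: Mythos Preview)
Your argument is correct and reaches the same endpoint (Lemma \ref{top 8} together with Remark \ref{top 9}), but the route in the type $(\beta)$ case is genuinely different from the paper's. The paper first shows that $\mathcal{A}$ must be antiliminal (otherwise $\mathcal{I}_{0}\cong\mathbb{K}$, all traces vanish on it, and $\mathcal{J}_{MF}(\mathcal{A})\neq 0$), then invokes the fact that extreme tracial states are factor traces to select a \emph{factor} trace $\tau$ with $\pi_{\tau}$ faithful, and finally argues that an antiliminal image cannot sit inside a finite type I factor, forcing $\pi_{\tau}(\mathcal{A})''$ to be type $\mathrm{II}_{1}$. You bypass both the antiliminal dichotomy and the factor--trace machinery: you take any trace not vanishing on $\mathcal{I}_{0}$, pass to the GNS of its (faithful) restriction to the simple ideal $\mathcal{I}_{0}$, extend back to $\mathcal{A}$, and show directly that $M=\pi_{\tau}(\mathcal{A})''=\pi_{\tau_{0}}(\mathcal{I}_{0})''$ is diffuse because a minimal projection would give a finite-dimensional quotient of the simple infinite-dimensional $\mathcal{I}_{0}$. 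Your approach is more elementary and self-contained; the paper's approach yields the slightly sharper conclusion that one can take $M$ to be a type $\mathrm{II}_{1}$ factor, though only diffuseness is needed for Remark \ref{top 9}. Two small points worth making explicit in your write-up: (i) the cyclic trace vector $\xi$ is also separating for $M$ (so $\widehat{\tau}$ is faithful on $M$, as required by Remark \ref{top 9}); (ii) you are redefining $\tau$ in the third paragraph---it would be cleaner to use a fresh symbol to avoid confusion with the trace chosen at the start.
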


\begin{proof}
By Remark \ref{top 1} and Proposition \ref{top 7}, we know that $\delta
_{top}(x_{1},\cdots,x_{n})\geq1$ as $\mathcal{A}$ is of type $(\alpha)$ or
$(\gamma).$ So we only need to consider the case in which $\mathcal{A}$ is
type $(\beta)$ in Theorem \ref{just-infinite 1.2}.

Assume $\mathcal{A}$ is not antiliminal. Since $\mathcal{A}$ is prime,
$\mathcal{A}$ has a faithful irreducible representation $\pi$ and there is an
element $x\in\mathcal{A}$ such that $rank\left(  \pi\left(  x\right)  \right)
\leq1$ by IV.1.1.7 in \cite{BK1}. It follows that $\mathcal{A}$ has an
essential ideal isomorphic to $\mathbb{K}$. So we may assume $\mathbb{K}%
\subseteq\mathcal{A}.$ Note $\mathcal{I}_{0}$ in Theorem
\ref{just-infinite 1.2} is essential, so we know that $\mathbb{K}%
\cap\mathcal{I}_{0}\neq\{0\}$. By the fact that $\mathcal{I}_{0}$ is simple,
we have
\[
\mathbb{K}\cap\mathcal{I}_{0}=\mathbb{K=}\mathcal{I}_{0}.
\]
Since $\mathcal{A}/\mathcal{I}_{0}$ is finite-dimensional and every tracial
state vanish on $\mathbb{K}$, we have
\[
\mathbb{K}=\mathcal{J}_{MF}\left(  \mathcal{A}\right)  \neq0.
\]
This contradict to the fact that $\mathcal{A}\in\mathcal{W}$. Then
$\mathcal{A}$ must be antiliminal$.$ Let $\tau$ be a factor tracial state on
$\mathcal{A}$. Then
\begin{equation}
\ker\pi_{\tau}\supseteq\mathcal{I}_{0}\text{ or}\ker\pi_{\tau}=0
\tag{3.1}\label{3.1}%
\end{equation}
by the fact that $\mathcal{I}_{0}$ is essential and simple where $\pi_{\tau}$
is a representation induced by $\tau$. Note $\mathcal{A\in W\cap S},$ then
every factor tracial state is MF tracial state and
\begin{equation}
\mathcal{J}_{MF}\left(  \mathcal{A}\right)  =0. \tag{3.2}\label{3.2}%
\end{equation}
In \cite{HM}, it was shown that the set of factor tracial states is the set of
extreme points of $\mathcal{TS}\left(  \mathcal{A}\right)  $. It follows that
there is at lease one factor tracial state $\tau$ such that $\pi_{\tau}$ is
faithful by (3.1) and (3.2). Then for such $\tau,$ $\pi_{\tau}\left(
\mathcal{A}\right)  $ is antiliminal too. If $\pi_{\tau}\left(  \mathcal{A}%
\right)  ^{\prime\prime}$ is type I factor with tracial state, then $\pi
_{\tau}\left(  \mathcal{A}\right)  =\pi_{\tau}\left(  \mathcal{A}\right)
^{\prime\prime}\cong\mathcal{M}_{n}\left(  \mathbb{C}\right)  \otimes I_{k}$
for some integer $n$ and $k.$ This contradict to the fact that $\pi_{\tau
}\left(  \mathcal{A}\right)  $ is antiliminal. Hence $\pi_{\tau}\left(
\mathcal{A}\right)  ^{\prime\prime}$ is type II$_{1}$ factor. It implies that
there is an element $a\in\pi_{\tau}\left(  \mathcal{A}\right)  ^{\prime\prime
}$ which has no eigenvalues, so $\delta_{0}\left(  a\right)  =1$ by Remark
\ref{top 9}$.$ Now by Lemma \ref{top 8},
\[
\delta_{top}\left(  x_{1},\cdots,x_{n}\right)  \geq1.
\]

\end{proof}

Combining preceding results and Theorem \ref{don}, we have the following two corollaries.

\begin{corollary}
Let $\mathcal{A}=C^{\ast}(x_{1},\cdots,x_{n})$ be a unital C*-algebra. If
$\mathcal{A}$ is just-infinite MF-nuclear and $\mathcal{A}\in\mathcal{W\cap
S}$, then $\delta_{top}(x_{1},\cdots,x_{n})=1.$
\end{corollary}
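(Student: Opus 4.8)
The plan is to combine the two theorems already available in the excerpt. By the immediately preceding theorem, if $\mathcal{A}=C^*(x_1,\cdots,x_n)$ is a unital MF C*-algebra which is just-infinite and lies in $\mathcal{W}\cap\mathcal{S}$, then $\delta_{top}(x_1,\cdots,x_n)\geq 1$. So it suffices to produce the reverse inequality $\delta_{top}(x_1,\cdots,x_n)\leq 1$, and for this I would invoke Theorem \ref{don}: if $\mathcal{A}$ is MF-nuclear, then $\delta_{top}(x_1,\cdots,x_n)\leq 1$. Since the hypothesis of the corollary includes that $\mathcal{A}$ is MF-nuclear (hence in particular MF, so the previous theorem applies), both bounds hold simultaneously and we conclude $\delta_{top}(x_1,\cdots,x_n)=1$.

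Concretely, I would write: \emph{First}, observe that $\mathcal{A}$ being MF-nuclear means it is an MF C*-algebra, so the hypotheses of the previous theorem are met (just-infinite, in $\mathcal{W}\cap\mathcal{S}$, unital MF), giving $\delta_{top}(x_1,\cdots,x_n)\geq 1$. \emph{Second}, apply Theorem \ref{don} directly to the MF-nuclear algebra $\mathcal{A}=C^*(x_1,\cdots,x_n)$ to obtain $\delta_{top}(x_1,\cdots,x_n)\leq 1$. \emph{Third}, combine the two inequalities.

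There is essentially no obstacle here — this is a one-line consequence of two cited results, exactly as the sentence "Combining preceding results and Theorem \ref{don}" announces. The only thing to be a little careful about is making sure the class conditions line up: "MF-nuclear" should be read as "MF and nuclear" (so in particular MF), which is what lets the previous theorem's MF hypothesis be satisfied; and the $\mathcal{W}\cap\mathcal{S}$ membership is passed through verbatim. No new estimates, no new constructions, and no appeal to the structure theory of just-infinite algebras beyond what the previous theorem already used.

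\begin{proof}
Since $\mathcal{A}$ is MF-nuclear, it is in particular a unital MF C*-algebra. As $\mathcal{A}$ is just-infinite and $\mathcal{A}\in\mathcal{W}\cap\mathcal{S}$, the preceding theorem gives
\[
\delta_{top}(x_1,\cdots,x_n)\geq 1.
\]
On the other hand, since $\mathcal{A}=C^*(x_1,\cdots,x_n)$ is MF-nuclear, Theorem \ref{don} yields
\[
\delta_{top}(x_1,\cdots,x_n)\leq 1.
\]
Combining the two inequalities, $\delta_{top}(x_1,\cdots,x_n)=1$.
\end{proof}
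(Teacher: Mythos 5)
Your proof is correct and follows exactly the route the paper intends: the preceding theorem (just-infinite, unital MF, $\mathcal{A}\in\mathcal{W}\cap\mathcal{S}$) gives $\delta_{top}(x_{1},\cdots,x_{n})\geq 1$, and Theorem \ref{don} (MF-nuclear) gives $\delta_{top}(x_{1},\cdots,x_{n})\leq 1$. The paper itself only says ``Combining preceding results and Theorem \ref{don}'' and omits the details, so your write-up is if anything more explicit than the original.
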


\begin{corollary}
Let $\mathcal{A}=C^{\ast}(x_{1},\cdots,x_{n})$ be a unital C*-algebra. If
$\mathcal{A}$ is just-infinite MF-nuclear$\ $and is of type $(\alpha)$ or type
$(\gamma)$, then $\delta_{top}(x_{1},\cdots,x_{n})=1.$
\end{corollary}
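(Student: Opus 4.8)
The plan is to pin down $\delta_{top}(x_{1},\cdots,x_{n})$ by squeezing it between $1$ and $1$: a lower bound $\geq 1$ extracted from the primitive ideal structure of $\mathcal{A}$ (type $(\alpha)$ or $(\gamma)$), and an upper bound $\leq 1$ coming from nuclearity.

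For the lower bound, I would first observe that both type $(\alpha)$ and type $(\gamma)$ are exactly the hypotheses of Proposition \ref{top 7}, whose conclusion already reads $\delta_{top}(x_{1},\cdots,x_{n})\geq 1$, so one may simply invoke it. If instead one wants the mechanism laid out: in the type $(\alpha)$ case $\mathcal{A}$ is simple and infinite-dimensional, and Corollary \ref{top 5} gives $\delta_{top}(x_{1},\cdots,x_{n})\geq 1$; in the type $(\gamma)$ case $prim(\mathcal{A})\cong Y_{\infty}$ is an infinite set with each non-faithful irreducible representation finite-dimensional, so the countably many points of $Y_{\infty}^{\prime}$ furnish infinitely many pairwise non-unitarily-equivalent finite-dimensional irreducible representations of $\mathcal{A}$ (they have distinct kernels), and Lemma \ref{top 4}(ii) applies to yield $\delta_{top}(x_{1},\cdots,x_{n})\geq 1$. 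For the upper bound, since $\mathcal{A}=C^{\ast}(x_{1},\cdots,x_{n})$ is MF-nuclear, Theorem \ref{don} gives $\delta_{top}(x_{1},\cdots,x_{n})\leq 1$. Combining the two inequalities yields $\delta_{top}(x_{1},\cdots,x_{n})=1$.

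I do not expect any genuine obstacle here: the corollary is a formal consequence of the two inequalities above, and the only point that needs checking is that ``MF-nuclear'' includes ``MF'', so that $\delta_{top}$ is defined and Theorem \ref{don} is applicable; this is immediate from the definitions. In effect this corollary is just the special case of the preceding one in which the hypothesis $\mathcal{A}\in\mathcal{W}\cap\mathcal{S}$ is replaced by something weaker in this context: by Remark \ref{top 1} every C*-algebra of type $(\alpha)$ or $(\gamma)$ automatically lies in $\mathcal{W}$, and one does not even need membership in $\mathcal{S}$, because the lower bound here comes from the ideal/representation structure (Proposition \ref{top 7}) rather than from a type II$_{1}$ trace argument.
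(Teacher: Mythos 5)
Your proof is correct and follows exactly the route the paper intends: the lower bound $\delta_{top}(x_{1},\cdots,x_{n})\geq 1$ from Proposition \ref{top 7} (via Corollary \ref{top 5} in case $(\alpha)$ and Lemma \ref{top 4}(ii) in case $(\gamma)$) combined with the upper bound $\delta_{top}(x_{1},\cdots,x_{n})\leq 1$ from Theorem \ref{don} for MF-nuclear algebras. The paper states this corollary as an immediate consequence of ``preceding results and Theorem \ref{don},'' which is precisely your argument.
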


\end{document}